\newtheorem{remark}[theorem]{Remark}
\newcommand{\ds}{\displaystyle}
\newcommand{\PP}{\mathbf{P}}
\newcommand{\NN}{\mathbb{N}}
\newcommand{\R}{\mathbb{R}}
\renewcommand{\phi}{\varphi}
\newcommand{\vecb}[1]{\pmb{#1}}
\newcommand{\nv}{\vecb{n}}
\newcommand{\av}{\vecb{b}}
\newcommand{\rv}{\vecb{r}}
\newcommand{\xv}{\vecb{\phi}}
\newcommand{\dtq}[1]{\frac{d#1}{dt}\,}
\newcommand{\uu}{u}
\newcommand{\uud}{U}
\renewcommand{\div}{\nabla\cdot}
\newcommand{\divs}[1]{\nabla_{#1}\cdot}
\newcommand{\divts}[1]{\widetilde{\nabla}_{#1}\cdot}
\newcommand{\grads}[1]{\nabla_{#1}}
\newcommand{\gradts}[1]{\widetilde{\nabla}_{#1}}
\newcommand{\inter}[1]{[\![#1]\!] } 
\newcommand{\trace}[1]{_{\,\mid#1}}
\newcommand{\moit}[1]{\frac{#1}{2}}
\newcommand{\co}{{\mathcal O}}
\newcommand{\gv}{\vecb{g}}
\numberwithin{equation}{section}
\begin{document}

\title{Optimized
  Schwarz waveform relaxation and discontinuous Galerkin time stepping for heterogeneous problems.}

 \author{Laurence Halpern \thanks{LAGA, Universit\'e Paris XIII,
 99 Avenue J-B Cl\'ement, 93430 Villetaneuse, France,
 \texttt{halpern@math.univ-paris13.fr}}
   \and  Caroline Japhet \thanks{LAGA, Universit\'e Paris XIII,
  99 Avenue J-B Cl\'ement, 93430 Villetaneuse, France, and
 CSCAMM, University of Maryland, College Park, MD 20742 USA,
 \texttt{japhet@cscamm.umd.edu}}
    \and J\'er\'emie Szeftel\thanks{D\'epartement de math\'ematiques et applications,
 Ecole Normale Sup\'erieure, 45 rue d'Ulm, 75230 Paris Cedex 05 France.
 \texttt{Jeremie.Szeftel@ens.fr}.
The first two authors are partially supported by french ANR (COMMA) and
GNR MoMaS}}

\maketitle

\begin{abstract}
We design and analyze Schwarz waveform relaxation algorithms for
domain decomposition of advection-diffusion-reaction problems with strong heterogeneities. These algorithms rely on optimized Robin or Ventcell transmission conditions, and can be used with curved interfaces. We analyze the semi-discretization in time with discontinuous Galerkin as well. We also show two-dimensional numerical results using generalized mortar finite elements in space.
\end{abstract}

\begin{keywords}
Coupling heterogeneous problems, domain decomposition, optimized Schwarz waveform relaxation,
time discontinuous Galerkin, nonconforming grids, error analysis.
\end{keywords}

\begin{AMS}
65 M 15, 65M50, 65M55.
\end{AMS}

\pagestyle{myheadings}
\thispagestyle{plain}
\markboth{L. Halpern, C. Japhet, J. Szeftel}{Schwarz Waveform relaxation and discontinuous Galerkin}


\section{Introduction}
In many fields of applications such as reactive transport, far field simulations of underground nuclear waste disposal or ocean-atmosphere coupling, models have to be coupled in different spatial zones, with very different space and time scales and possible complex geometries. For such problems with long time computations, a splitting of the time interval into windows is essential, with the possibility to use robust and fast solvers in each time window.

The Optimized Schwarz Waveform Relaxation (OSWR) method was introduced for linear parabolic and hyperbolic problems with constant coefficients in \cite{Gander:1999:OCO}. It was analyzed for advection diffusion equations, and applied to non constant advection, in \cite{martin:2005:AOS}. The algorithm computes independently in each subdomain over the whole time interval, exchanging space-time boundary data through optimized transmission operators. The operators are of Robin or Ventcell type, with coefficients optimizing a convergence factor, extending the strategy developed by F. Nataf and coauthors \cite{Charton:1991:MDD,japhet:1998:MDD}. The optimization problem was analyzed in \cite{gander:2007:OSW}, \cite{bennequin:2009:HBA}.

This method potentially applies to different space-time discretization in subdomains, possibly nonconforming and needs a very small number of iterations to converge. Numerical evidences of the performance of the method with variable smooth coefficients were given in \cite{martin:2005:AOS}. An extension to discontinuous coefficients was introduced in \cite{gander:2007:SWR}, with asymptotically optimized Robin transmission conditions in some particular cases.

The discontinuous Galerkin finite element method in time offers many advantages. Rigorous analysis can be made for any degree of accuracy and local time-stepping, and time steps can be adaptively controlled by a posteriori error analysis, see \cite{thomee:1997:FEM,johnson:1985:dgfirst,makridakis:2007:pea}. In a series of presentations in the regular domain decomposition meeting we presented the DG-OSWR method, using discontinuous Galerkin for the time discretization of the OSWR.
In \cite{blayo:2005:OSW}, \cite{halpern:2008:DGN}, we introduced the algorithm in one dimension with discontinuous coefficients.  In \cite{halpern:2009:DGN}, we extended the method to the two dimensional case. For the space discretization, we extended numerically the nonconforming approach in \cite{gander:2005:NZF} to advection-diffusion problems and optimized order 2 transmission conditions, to allow for non-matching grids in time and space on the boundary. The space-time projections between subdomains were computed with an optimal projection algorithm without any additional grid, as in \cite{gander:2005:NZF}. Two dimensional simulations were presented. In \cite{halpern:2010:DGN} we extended the proof of convergence of the OSWR algorithm to nonoverlapping subdomains with curved interfaces. Only sketches of proofs were presented.

The present paper intends to give a full and self-contained account of the method for the advection diffusion reaction equation with non constant coefficients.

In Section 2, we present the Robin and Ventcell algorithms at the continuous level in any dimension, and give in details the new proofs of convergence of the algorithms for nonoverlapping subdomains with curved interfaces.\\
Then in Section 3, we discretize in time with discontinuous Galerkin, and prove the convergence of the semi-discrete algorithms for flat interfaces. Error estimates are derived from the classical ones \cite{thomee:1997:FEM}. \\
The fully discrete problem is introduced in Section 4, using finite elements. The interfaces are treated by a new cement approach, extending the method in \cite{gander:2005:NZF}. Given the length of the paper, the numerical analysis will be treated in a forthcoming paper. \\
We finally present in Section 5 simulations for two subdomains, with piecewise smooth coefficients and a curved interface, for which no error estimates are available. We also include an application to the porous media equation.\\

%
Consider the advection-diffusion-reaction equation in $\Omega=\R^N$
\begin{equation}
  \label{eq:adall}
  \partial_t u +\div(\av u -\nu \nabla u)+c u=f  \quad \mbox{in } \Omega \times (0,T),
\end{equation}
with initial condition
\begin{equation}
  \label{eq:adu0}
  u(0,x)=u_0(x)  \quad x \in \Omega.
\end{equation}
The advection and diffusion coefficients $\av$ and $\nu$ , as well as
the reaction coefficient $c$, are piecewise smooth, the problem is
parabolic, i.e. $\nu \ge \nu_0 > 0$ \textit{a.e.} in $\R^N$ .
\begin{theorem}[Well-posedness and regularity, \cite{lions:1968:PAL}]\label{th,regall}
  Let $\Omega=\R^N$.  Suppose $\av\in (W^{1,\infty}(\Omega))^N$,
  $\nu \in W^{1,\infty}(\Omega)$ and $c \in L^{\infty}(\Omega)$. If
  the initial value $u_0$ is in $H^1(\Omega)$, and the right-hand side
  $f$ is in $L^2(0,T;L^2(\Omega))$, then there exists a unique solution $u$ of \eqref{eq:adall}, \eqref{eq:adu0} in
  $H^1(0,T;L^2(\Omega))$ $\cap$ $L^{\infty}(0,T;H^1(\Omega))$ $\cap$
  $L^{2}(0,T;H^2(\Omega))$.
\end{theorem}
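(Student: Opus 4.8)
The plan is to treat \eqref{eq:adall}, \eqref{eq:adu0} as a uniformly parabolic Cauchy problem, obtain the solution by the variational method, and then bootstrap regularity. First I would expand the flux: since $\div(\av u-\nu\nabla u)=\av\cdot\nabla u+(\div\av)u-\nu\Delta u-\nabla\nu\cdot\nabla u$, the equation becomes a second order equation whose principal part $-\nu\Delta$ is uniformly elliptic ($\nu\ge\nu_0>0$) and whose lower order coefficients lie in $L^\infty(\Omega)$. Multiplying by a test function and integrating over $\Omega=\R^N$ (boundary terms vanishing) yields the weak form with the bilinear form
$$a(u,v)=\int_\Omega\nu\,\nabla u\cdot\nabla v\,dx-\int_\Omega (\av u)\cdot\nabla v\,dx+\int_\Omega c\,uv\,dx,$$
which is bounded on $H^1(\Omega)\times H^1(\Omega)$ and, by Young's inequality on the advection term, satisfies a G\aa rding inequality $a(v,v)\ge\frac{\nu_0}{2}\|\nabla v\|_{L^2}^2-\lambda\|v\|_{L^2}^2$ for a suitable $\lambda\ge0$.

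Next, with the Gelfand triple $H^1(\Omega)\hookrightarrow L^2(\Omega)\hookrightarrow H^{-1}(\Omega)$, I would invoke Lions' abstract existence theorem for parabolic variational problems: for $u_0\in L^2(\Omega)$ and $f\in L^2(0,T;L^2(\Omega))\subset L^2(0,T;H^{-1}(\Omega))$ it produces a unique $u\in L^2(0,T;H^1(\Omega))\cap C([0,T];L^2(\Omega))$ with $\partial_t u\in L^2(0,T;H^{-1}(\Omega))$. Uniqueness follows from the energy identity applied to the difference of two solutions together with Gr\"onwall's lemma.

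To reach the asserted higher regularity I would exploit $u_0\in H^1(\Omega)$ and test the equation with $\partial_t u$. Formally this gives $\|\partial_t u\|_{L^2}^2+\frac{1}{2}\frac{d}{dt}\int_\Omega\nu|\nabla u|^2\,dx\le\int_\Omega f\,\partial_t u\,dx+(\text{lower order terms})$, and after absorbing $\partial_t u$ by Young's inequality and integrating in time one obtains the uniform bounds $u\in H^1(0,T;L^2(\Omega))\cap L^\infty(0,T;H^1(\Omega))$. Then, reading the equation as $-\nu\Delta u=f-\partial_t u-(\text{first order terms in }u)\in L^2(0,T;L^2(\Omega))$ and applying whole-space elliptic regularity for the uniformly elliptic operator $u\mapsto-\div(\nu\nabla u)$ yields $u\in L^2(0,T;H^2(\Omega))$.

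The main obstacle is making this last estimate rigorous: $\partial_t u$ a priori lies only in $L^2(0,T;H^{-1}(\Omega))$, so the pairing with $\partial_t u$ and the term $\frac{d}{dt}a$ are not directly legitimate, and on the unbounded domain $\R^N$ there is neither a compact embedding nor a discrete spectrum to run a convenient Galerkin scheme. I would circumvent this either by a Faedo--Galerkin approximation in which the higher a priori bounds hold uniformly at the discrete level and pass to the limit by weak compactness, or by the time-difference-quotient (Steklov averaging) technique, which legitimizes the test function $\partial_t u$ and the elliptic bootstrap; the $W^{1,\infty}$ regularity of $\av$ and $\nu$ is exactly what is needed to control the first order and $\nabla\nu$ terms uniformly throughout.
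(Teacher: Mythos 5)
The paper gives no proof of this theorem---it is simply quoted from Lions' book---and your outline reproduces exactly the variational argument of that reference: a G\aa rding inequality for the non-symmetric form, Lions' abstract parabolic existence theorem in the Gelfand triple $H^1(\R^N)\hookrightarrow L^2(\R^N)\hookrightarrow H^{-1}(\R^N)$, the test function $\partial_t u$ (made rigorous at the Faedo--Galerkin level, using the non-divergence form of the advection term so that no $\nabla\partial_t u$ appears) to get $H^1(0,T;L^2)\cap L^\infty(0,T;H^1)$, and whole-space elliptic regularity for $u\mapsto -\div(\nu\nabla u)$ to get $L^2(0,T;H^2)$. This is correct; the only quibble is that your stated obstacle about the absence of a compact embedding on $\R^N$ is immaterial for a linear equation, since any countable dense family of $H^1(\R^N)$ serves as a Galerkin basis and weak convergence alone suffices to pass to the limit.
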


We consider now a decomposition of $\Omega$ into nonoverlapping
subdomains $\Omega_i, i\in \inter{1,I}$, as depicted in Figure
\ref{fig:decompb}.  In all cases the boundaries
between the subdomains are supposed to be hyperplanes at infinity.
\begin{figure}[H]
  \centering
  \psfrag{Oj}{$ \Omega_i$}
  \begin{tabular}{cc}
  \includegraphics[width=0.17\textwidth]{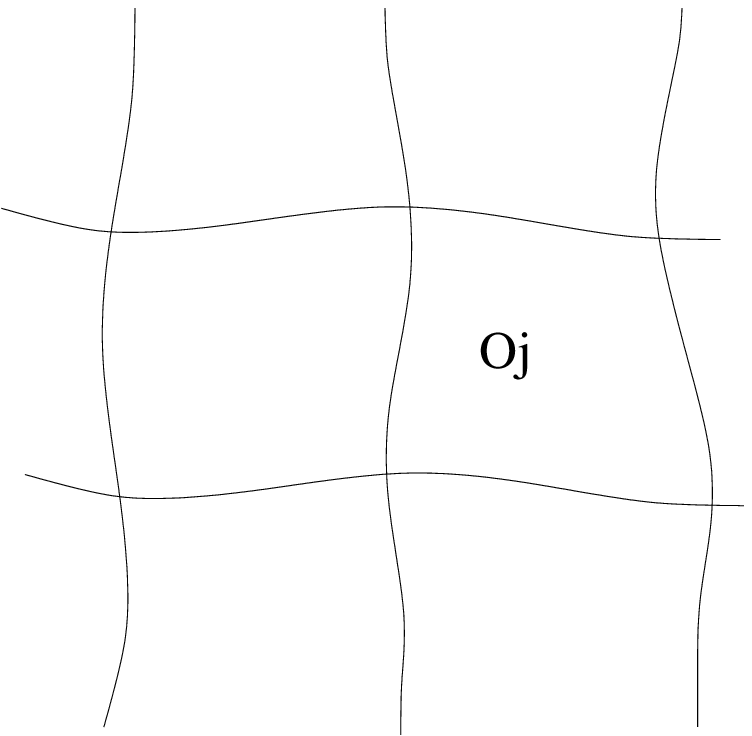} \hspace{2cm}
  &
   \includegraphics[width=0.14\textwidth]{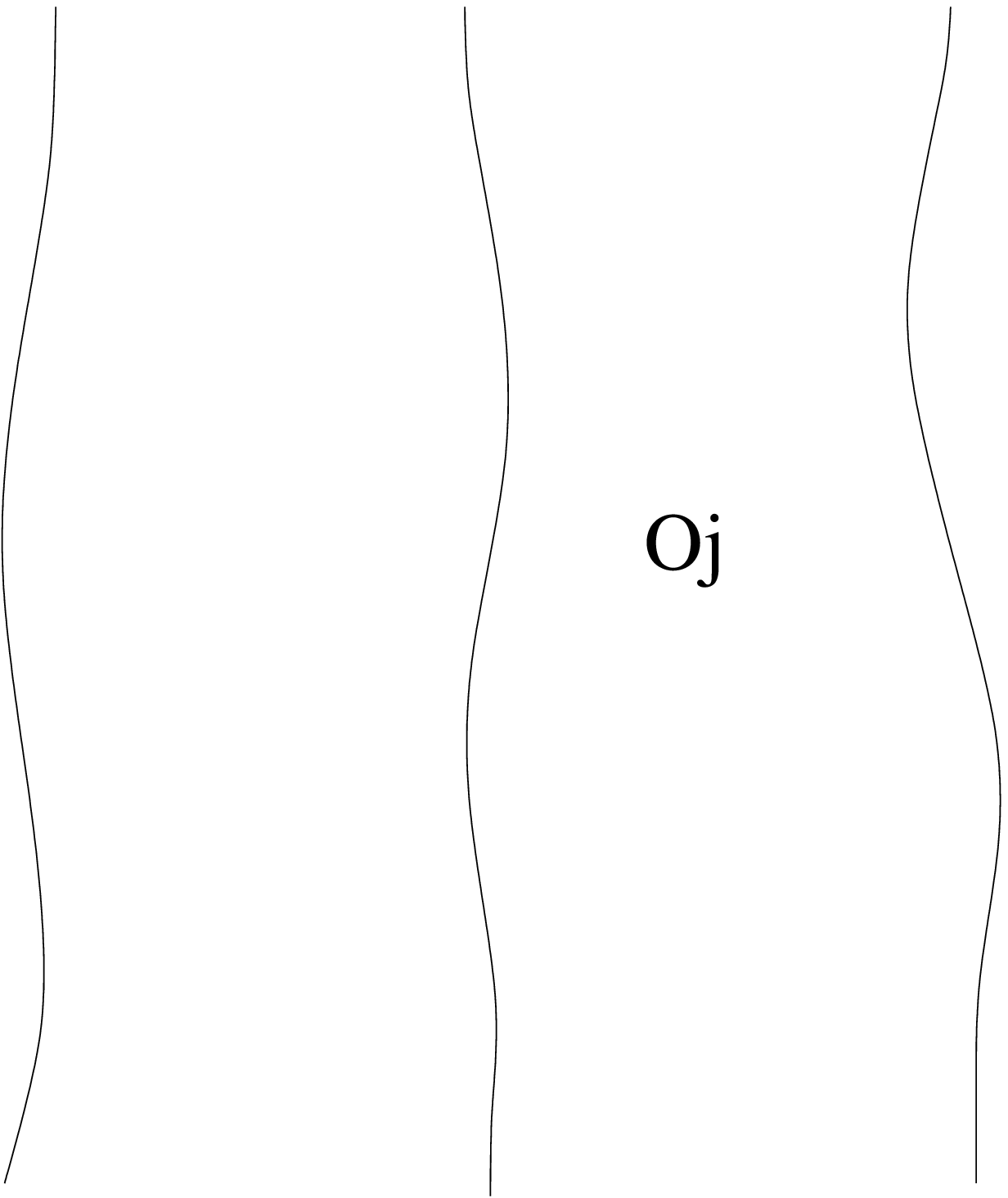}
   \end{tabular}
    \caption{Left: decomposition with possible corners (Robin
      transmission conditions), right: decomposition in bands (Ventcell transmission conditions)}
  \label{fig:decompb}
\end{figure}
Problem \eqref{eq:adall} is equivalent to solving $I$ problems in
subdomains $\Omega_i$, with transmission conditions on the interface
$\Gamma_{i,j} $ between two neighboring subdomains $\Omega_i$ and $\Omega_j$, given by the jumps $[u]=0$, $[(\nu\nabla u-\av)\cdot\nv_i]=0$. Here $\nv_i$ is the unit exterior normal to $\Omega_i$. As coefficients $\nu$ and $\av$ are possibly discontinuous on
the interface, we note, for $s \in \Gamma_{i,j}$,
$\nu_i(s)=\lim_{\varepsilon\rightarrow 0_+}\nu(s-\varepsilon \nv_i)$.
The same notation holds for $\av$ and $c$.

To any $i \in \inter{1,m}$, we associate the set ${\cal
  N}_i$ of indices of the neighbors of $\Omega_i$.

Following \cite{Charton:1991:MDD,Gander:1999:OCO,gander:2007:SWR}, we propose as preconditioner for (\ref{eq:adall}, \ref{eq:adu0}), the sequence of problems
\begin{subequations}\label{eq:algoorder2}
  \begin{align}
     &\partial_t u_i^k +\div(\av_i u_i^k-\nu_i\nabla u_i^k)+c_i u_i^k =f
      \mbox{ in }\Omega_i\times(0,T),\label{eq:algorogral1}\\[3mm]
     &\bigl(\nu_i \partial_{\nv_i}-\av_i\cdot\nv_i
      \bigr)\,u_i^{k} +\mathcal{S}_{i,j}u_i^{k} =
      \bigl(\nu_{j} \partial_{\nv_{i}}-\av_{j}\cdot
      \nv_{i}\bigr)\, u_{j}^{k-1} +\mathcal{S}_{i,j}u_{j}^{k-1}
      \mbox{ on }\Gamma_{i,j}, \, j\in {\cal N}_i. \label{eq:algogal2}
  \end{align}
\end{subequations}
The boundary operators $\mathcal{S}_{i,j}$, acting on the part $\Gamma_{i,j}$ of the boundary of $\Omega_i$ shared by the boundary of $\Omega_j$ are given by
\begin{equation}
  \label{eq:boundaryoperators}
     \mathcal{S}_{i,j}\phi=p_{i,j}\phi
    + q_{i,j} (\partial_t\phi+\divs{\Gamma_{i,j}}(\rv_{i,j}
    \phi  -s_{i,j} \grads{\Gamma{i,j}}\phi)).
\end{equation}
$\grads{\Gamma}$ and $\divs{\Gamma}$ are respectively the gradient
and divergence operators on $\Gamma$. $p_{i,j}, \ q_{i,j}, \ s_{i,j}$
are functions in $L^\infty(\Gamma_{i,j})$ and $\rv_{i,j}$ is in
$(L^\infty(\Gamma_{i,j}))^{N-1}$. The initial value is that of $u_0$ in each subdomain.  An initial guess $(g_{i,j})$ is given on
$L^2((0,T)\times \Gamma_{i,j})$ for $i\in \inter{1,I}, j \in {\cal
  N}_i$.  By convention for the first iterate, the right-hand side in \eqref{eq:algoorder2} is given by $g_{i,j}$.
Under regularity assumptions,
solving \eqref{eq:adall} is equivalent to solving
\begin{equation}\label{eq:dec}
\begin{array}{l}
    \partial_t u_i +\div(\av_i u_i-\nu_i\nabla u_i)+c_i u_i=f \mbox{ in } \Omega_i
    \times(0,T),  \\[0.5mm]
    \ds\bigl(\nu_i \partial_{\nv_i}
    -\av_i\cdot\nv_i \bigr)\, u_i +\mathcal{S}_{i,j}u_i =
    \bigl(\nu_{j}
    \ds\partial_{\nv_i} -\av_{j}\cdot\nv_{i} \bigr)\, u_{j}
    +\mathcal{S}_{i,j}u_{j} \mbox{ on }\Gamma_{i,j}\times (0,T), j\in {\cal N}_i,
\end{array}
\end{equation}
for $i\in \inter{1,I}$ with $u_i$ the restriction of $u$ to $\Omega_i$.

%
\section{Studying the algorithm for the P.D.E}
The first step of the study is to give a frame for the definition of the iterates.
\subsection{The local problem}
The optimized Schwarz waveform relaxation algorithm relies on the
resolution of the following initial boundary value problem in a
domain $\co$ with boundary $\Gamma$:
\begin{equation}\label{eq:pblim}
  \begin{array}{l}
     \partial_t w +\div(\av w -\nu \nabla w)+c w=f \mbox{ in }
    \co \times (0,T),\\
    \ds\nu\,\partial_{\nv} w - \av\cdot\nv \, w
    + \mathcal{S} w =g \mbox{ on }
    \Gamma \times (0,T),\\
    w(\cdot,0)=u_0 \mbox{ in }    \co,
  \end{array}
\end{equation}
where $\nv$ is the exterior unit normal to $\co$. The boundary operator $\mathcal{S}$ is defined on $\Gamma=\partial \co$ by
\begin{equation}\label{eq:boundaryoperator}
  \mathcal{S}w= p\,w + q\, (\partial_t w
  + \divs{\Gamma}(\rv w -s\grads{\Gamma}w)).
\end{equation}
The domain $\co$ has either form depicted in Figure \ref{fig:decompb}, left for $q=0$, right otherwise.

The functions $p,\,q$ and $s$ are in $L^\infty(\Gamma)$, and $\rv$ is in $(L^\infty(\Gamma))^{N-1}$. Either $q=0$, and the boundary condition is
of Robin type, or we suppose $q\ge q_0 >0$ and the operator will be referred to as Order 2 or Ventcell operator.  In the latter case, we
need the spaces
\begin{equation}\label{eq:sobtrace}
  H^s_s(\co)=\{v\in H^s(\co), v\trace{\Gamma}\in H^s(\Gamma)\},
\end{equation}
which are defined for $s>1/2$, and equipped with the scalar product
\begin{equation}\label{eq:sobtraceps}
  (w,v)_{H^s_s(\co)}=(w,v)_{H^s(\co)}+(qw,v)_{H^s(\Gamma)}.
\end{equation}

We define the bilinear forms $m$ on $H^1(\co)$ and $a$ on $ H^1_1(\co)$ by
\begin{equation}\label{eq:sobtraceps0}
 m(w,v)=(w,v)_{L^2(\co)}+(qw,v)_{L^2(\Gamma)},
\end{equation}
and
\begin{multline}
  \label{eq:formbilin}
  a(w,v):=
  \int_\co(\frac{1}{2}((\av\cdot\nabla w)v-(\av\cdot\nabla v)w))\, dx
  +\int_\co \nu \nabla w\cdot\nabla v \, dx + \int_\co (c+\frac{1}{2}\div\av) w v \, dx\\
  +\int_\Gamma \bigl((p-\frac{\av\cdot\nv}{2}+{q \over 2}\nabla_{\Gamma}
  \cdot\rv)w v+{q \over 2}(\nabla_{\Gamma} \cdot(\rv w)v-\nabla_{\Gamma} \cdot(\rv v)w)
  +qs\nabla_{\Gamma}w \cdot\nabla_{\Gamma} v \bigr)\, d \sigma,
\end{multline}
By the Green's formula, we can write a variational formulation of
\eqref{eq:pblim}:
\begin{equation}\label{eq:formvariat}
 \ds \frac{d\,}{dt}\,m(w, v)+a(w,v)
   = (f,v)_{L^2(\co)}+(g,v)_{L^2(\Gamma)}.
\end{equation}
The well-posedness is a generalization of results in
\cite{gander:2007:OSW,bennequin:2009:HBA,szeftel:2005:ABC}. It relies
on energy estimates and Gr\"{o}nwall's lemma.
\begin{theorem}\label{th:exfortefo}
  Suppose $\nu \in W^{1,\infty}(\co)$, $\av \in
  (W^{1,\infty}(\co))^N$, $c \in L^{\infty}(\co)$, $p \in L^{\infty}(\Gamma), \ q \in
  L^{\infty}(\Gamma)$, $\rv \in (W^{1,\infty}(\Gamma))^{N-1}$, $s \in
  W^{1,\infty}(\Gamma)$ with $s >0$ \textit{a.e}.

   \textbf{If $q=0$}, if $f$ is in $H^{1}(0,T;L^{2}(\co))$, $u_0$ is
   in $H^{2}(\co)$   and $g$ is in $H^1(0,T;L^2(\Gamma))\cap L^{\infty}(0,T;H^{1/2}(\Gamma))$, satisfying the compatibility condition
   $\ds\nu\,\partial_{\nv} u_0- \av\cdot\nv \, u_0
   + p u_0 =g$
, the
   subdomain problem \eqref{eq:pblim} has a unique solution $w$ in
   $L^\infty(0,T;H^{2}(\co))$ $\cap$ $W^{1,\infty}(0,T;L^2(\co))$.

  \textbf{If $q\ge q_0 >0 \ a.e.$}, if $f$ is in
  $H^{1}(0,T;L^{2}(\co))$, $u_0$ is in $H^{2}_2(\co)$, and $g$ is in
  $H^{1}((0,T);L^2(\Gamma))$, problem \eqref{eq:pblim}
  has a unique solution $w$ in $L^\infty(0,T;H^{2}_2(\co))$ $\cap$ $
  W^{1,\infty}(0,T;L^2(\co))$ with $\partial_tw\in L^\infty
  (0,T;L^2(\Gamma))$.
\end{theorem}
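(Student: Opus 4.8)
The plan is to establish well-posedness via a Galerkin method combined with energy estimates and Grönwall's lemma, treating the two cases ($q=0$ and $q\ge q_0>0$) in parallel but paying attention to the different function-space settings. The variational formulation \eqref{eq:formvariat} built on the bilinear form $a$ of \eqref{eq:formbilin} is the natural starting point, and the first task is to record the key algebraic structure of $a$: the skew-symmetric advection terms and the skew-symmetric Ventcell contribution $\tfrac{q}{2}(\nabla_\Gamma\cdot(\rv w)v-\nabla_\Gamma\cdot(\rv v)w)$ vanish on the diagonal, so that $a(v,v)$ reduces to a manifestly nonnegative principal part plus lower-order terms. Concretely, I would show a coercivity/Gårding inequality of the form $a(v,v)\ge \alpha\,\|\nabla v\|_{L^2(\co)}^2 + \alpha\,\|q^{1/2}s^{1/2}\nabla_\Gamma v\|_{L^2(\Gamma)}^2 - \lambda\, m(v,v)$ for suitable constants $\alpha>0$, $\lambda\ge 0$, using $\nu\ge\nu_0>0$, $s>0$ a.e., and the $L^\infty$ bounds on the coefficients; continuity of $a$ and $m$ on the relevant spaces follows directly from the $W^{1,\infty}$ hypotheses.

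Next I would set up a Faedo–Galerkin approximation: choose a basis of $H^1(\co)$ (respectively $H^1_1(\co)$ in the Ventcell case), project \eqref{eq:formvariat} onto finite-dimensional subspaces, and solve the resulting linear ODE system for the coefficients, which is well-posed because $m$ is a genuine inner product (note that for $q\ge q_0>0$ the boundary term in $m$ is coercive on the trace, which is exactly why the space $H^s_s(\co)$ with scalar product \eqref{eq:sobtraceps} is the right setting). Testing with $v=w_n$ and using the Gårding inequality yields the basic energy estimate controlling $\sup_t m(w_n,w_n)$ together with $\int_0^T(\|\nabla w_n\|^2+\|q^{1/2}s^{1/2}\nabla_\Gamma w_n\|^2)\,dt$ in terms of the data $\|f\|_{L^2(0,T;L^2(\co))}$, $\|g\|_{L^2(0,T;L^2(\Gamma))}$ and $\|u_0\|$, after an application of Grönwall's lemma to absorb the $\lambda\,m(w_n,w_n)$ term. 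Passing to weak limits gives existence of a weak solution, and uniqueness follows by the same energy estimate applied to the difference of two solutions with zero data.

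The heart of the matter, and where the regularity hypotheses on $f$, $u_0$ and $g$ enter, is upgrading this weak solution to the claimed regularity $L^\infty(0,T;H^2)\cap W^{1,\infty}(0,T;L^2)$ (with the trace regularity $\partial_t w\in L^\infty(0,T;L^2(\Gamma))$ in the Ventcell case). For this I would differentiate the equation formally in time and test with $\partial_t w$: since $f\in H^1(0,T;L^2(\co))$ and $g\in H^1(0,T;L^2(\Gamma))$, the time-differentiated right-hand side is controlled, and the same energy machinery applied to $\partial_t w$ yields $\partial_t w\in L^\infty(0,T;L^2(\co))$ (and, in the Ventcell case, the extra coercive boundary term in $m$ delivers $\partial_t w\in L^\infty(0,T;L^2(\Gamma))$). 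The compatibility condition in the Robin case guarantees that $\partial_t w(\cdot,0)$ is itself in $L^2(\co)$, so the initial slice of the differentiated estimate is finite — this is precisely why the compatibility condition is imposed. Finally, with $\partial_t w$ controlled in $L^2(\co)$, the elliptic part $-\div(\nu\nabla w)$ equals $f-\partial_t w-\div(\av w)-cw\in L^2(\co)$ together with the Robin/Ventcell boundary datum, so elliptic regularity for the stationary operator (at each fixed time) promotes $w$ to $H^2(\co)$ uniformly in $t$.

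The main obstacle I expect is the rigorous justification of the time-differentiated estimate and the elliptic regularity step in the Ventcell ($q\ge q_0>0$) case. The tangential second-order operator $\divs{\Gamma}(\cdots s\grads{\Gamma}\cdots)$ on the boundary couples the interior $H^2$ regularity to tangential $H^2(\Gamma)$ regularity of the trace through the space $H^2_2(\co)$, so one cannot simply invoke a standard interior-elliptic-regularity black box; the boundary operator must be treated as a (degenerate) elliptic operator on $\Gamma$ in its own right, and the two regularities have to be bootstrapped together. Since the interfaces here are flat (hyperplanes), I would exploit this by working in tangential Fourier variables or with tangential difference quotients to make the surface-diffusion regularity gain precise, which keeps the argument a controlled extension of the standard parabolic theory (as the reference to \cite{gander:2007:OSW,bennequin:2009:HBA,szeftel:2005:ABC} suggests) rather than genuinely new analysis.
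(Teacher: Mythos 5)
Your proposal follows essentially the same route as the paper: a G\aa rding-type lower bound for $a$ exploiting the skew-symmetric structure, a Faedo--Galerkin construction in $H^1(\co)$ (resp.\ $H^1_1(\co)$), the basic energy estimate plus Gr\"onwall, the time-differentiated estimate with the compatibility condition (resp.\ the equation and boundary condition at $t=0$) controlling the initial slice of $\partial_t w$, and finally elliptic regularity applied to the stationary problem --- in the Ventcell case to the coupled interior/boundary elliptic system --- to reach $H^2(\co)$ (resp.\ $H^2_2(\co)$). The only cosmetic difference is that for the last step the paper simply defers to the cited references of Szeftel rather than spelling out a tangential Fourier or difference-quotient argument, so your plan is a correct and faithful reconstruction.
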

\begin{proof}
The existence result relies on a Galerkin method like in
\cite{szeftel:2003:ABC,szeftel:2005:ABC}. In the sequel, $\alpha,\beta,\cdots$ denote positive real  numbers depending only on the coefficients and the geometry. The basic
estimate is obtained by multiplying the equation by $w$ and
integrating by parts in the domain.  We set $\|w\|=\|w\|_{L^2(\co)}$
and $\|w\|_\Gamma=\|w\|_{L^2(\Gamma)}$.
\begin{equation*}
\ds \frac{1}{2}\frac{d}{dt} m(w,w)\ +\
  a(w,w)= (f,w)_{L^2(\co)} + (g,w)_{L^2(\Gamma)}.
\end{equation*}
With the assumptions on the coefficients, we have \\

\textbf{Case $q=0$.}
\begin{equation*}
\begin{array}{lcl}
  a(w,w)&=&\ds\int_\co \nu |\nabla w|^2 \, dx
           + \int_\co (c+\frac{1}{2}\div\av)w^2 \, dx
           + \int_\Gamma
                \bigl(
                 (p -\frac{\av\cdot\nv}{2}
                 \bigr) w^2\, d \sigma \\[3mm]
        &\ge &\ds \alpha(\|\nabla w\|^2
                -\beta (\|w\|^2+\|w\|_\Gamma^2)\\[3mm]
       &\ge &\ds \frac{\alpha}{2}\|\nabla w\|^2
                -\gamma \|w\|^2,
  \end{array}
\end{equation*}
the last inequality coming from the trace theorem
\begin{equation}\label{eq:tracetheorem}
\|w\|_\Gamma^2 \le C \|\nabla w\| \|w\|.
\end{equation}
 We obtain with the Cauchy-Schwarz inequality
\begin{equation*}
\begin{array}{lcl}
 \frac{1}{2} \frac{d}{dt} \|w\|^2\ +\
  \frac{\alpha}{4}\|\nabla w\|^2
               &\le& \eta \|w\|^2 +
          \delta (\|f\|^2+\|g\|_\Gamma^2).
  \end{array}
\end{equation*}
We now have with Gr\"{o}nwall's lemma
\begin{equation}\label{eq:estimatepblimRobin}
\begin{array}{l}
    \ds \|w(t)\|^2\ +\
    \frac{\alpha}{2}\int_0^t(\|\nabla w(s)\|^2 ds  \le\\[3mm]
    \hspace{10mm}
   \ds  e^{2\eta  T}
        ( \|u_0\|^2 +
              2 \delta (\|f\|_{L^2(0,T;L^2(\co))}^2
                        +\|g\|_{L^2(0,T;L^2(\Gamma))}^2)).
  \end{array}
\end{equation}
We apply \eqref{eq:estimatepblimRobin} to $w_t$:
\begin{equation*}
\begin{array}{l}
    \ds \|w_t(t)\|^2\ +
     \frac{\alpha}{2}\int_0^t\|\nabla w_t(s)\|^2 ds  \le\\[3mm]
    \hspace{10mm}
   \ds  e^{2\eta  T}
        ( \|{w_t}_0\|^2 +
              2 \delta (\|f_t\|_{L^2(0,T;L^2(\co))}^2
                        +\|g_t\|_{L^2(0,T;L^2(\Gamma))}^2)).
  \end{array}
\end{equation*}
Thanks to the compatibility condition, ${w_t}_0$ can be estimated,
using the equation, by $\|{w_t}_0\| \le \zeta(\|u_0\|_{H^2(\co)} +
\|f(0,\cdot)\|)$ , and we obtain
\begin{equation*}
\begin{array}{l}
    \ds \|w_t(t)\|^2\ +
    \frac{\alpha}{2}\int_0^t\|\nabla w_t(s)\|^2 ds  \le\\[3mm]
    \hspace{10mm}
   \ds \theta e^{2\eta  T}
        ( \|u_0\|_{H^2(\co)}  +
                (\|f\|_{H^1(0,T;L^2(\co))}^2
                        +\|g\|_{H^1(0,T;L^2(\Gamma))}^2)).
  \end{array}
\end{equation*}
\textbf{Case $q\ge q_0 >0$} \ \textit{a.e}.
\begin{equation*}
\begin{array}{lcl}
  a(w,w)&=&\ds\int_\co \nu |\nabla w|^2 \, dx
           + \int_\co (c+\frac{1}{2}\div\av)w^2 \, dx \\[2mm]
           & & \ds\quad
           + \int_\Gamma
                \bigl(
                 (p -\frac{\av\cdot\nv}{2}+
                 {q \over 2}\nabla_{\Gamma}\cdot\rv)w^2
                 +qs |\nabla_{\Gamma}w|^2
                 \bigr) \, d \sigma,\\[2mm]
        &\ge &\ds \alpha(\|\nabla w\|^2
               + \|\nabla_{\Gamma} w\|_{\Gamma}^2)
               -\beta m(w,w),
  \end{array}
\end{equation*}
and by the Cauchy-Schwarz inequality
\begin{equation*}
\int_\co (f,w)\,dx +
           \int_\Gamma (g,w)_\Gamma\,d\sigma
           \le
           \gamma m(w,w)
           + \delta (\|f\|^2+\|g\|_\Gamma^2).
\end{equation*}
Collecting these inequalities, we obtain
\begin{equation*}
\begin{array}{lcl}
 \frac{1}{2}\frac{d}{dt} m(w,w)\ +\
  \alpha(\|\nabla w\|^2
               + \|\nabla_{\Gamma} w\|_{\Gamma}^2)
               &\le& (\beta+\gamma) m(w,w) +
          \delta (\|f\|^2+\|g\|_\Gamma^2).
  \end{array}
\end{equation*}
We now integrate in time and use Gr\"{o}nwall's lemma to obtain for any $t$ in $(0,T)$
\begin{equation}\label{eq:estimatepblim}
\begin{array}{l}
    \ds m(w(t),w(t))\ +\
    2\alpha\int_0^t(\|\nabla w(s)\|^2
               + \|\nabla_{\Gamma} w(s)\|_{\Gamma}^2)ds  \le\\[3mm]
    \hspace{10mm}
   \ds  e^{(\beta+\gamma) T}
        ( m(u_0,u_0) +
              2 \delta (\|f\|_{L^2(0,T;L^2(\co))}^2
                        +\|g\|_{L^2(0,T;L^2(\Gamma))}^2)).
  \end{array}
\end{equation}
We apply \eqref{eq:estimatepblim} to $w_t$:
 \begin{equation*}
\begin{array}{l}
    \ds m(w_t(t),w_t(t))\ +\
    2\alpha\int_0^t(\|\nabla w_t(s)\|^2
               + \|\nabla_{\Gamma} w_t(s)\|_{\Gamma}^2)ds  \le\\[3mm]
    \hspace{10mm}
   \ds  e^{(\beta+\gamma) T}
        ( m({w_t}_0,{w_t}_0) +
              2 \delta (\|f_t\|_{L^2(0,T;L^2(\co))}^2
                        +\|g_t\|_{L^2(0,T;L^2(\Gamma))}^2) ).
  \end{array}
  \end{equation*}
We now use the equations at time 0 to estimate $m({w_t}_0,{w_t}_0)$.
From the equation in the domain, we deduce that
\[
\|{w_t}_0\|\le \zeta(\|{u}_0\|_{H^2(\co)}+ \|f(0,\cdot)\|),
\]
and from the boundary condition that
\[
\|{w_t}_0\|_{\Gamma}\le \eta(\|{u}_0\|_{H_2^2(\co)}+ \|g(0,\cdot)\|_{\Gamma}),
\]
which gives altogether
 \begin{equation}\label{eq:estimatepblim2}
\begin{array}{l}
    \ds m(w_t(t),w_t(t))\ +\
    2\alpha\int_0^t(\|\nabla w_t(s)\|^2
               + \|\nabla_{\Gamma} w_t(s)\|_{\Gamma}^2)ds  \le\\[3mm]
    \hspace{10mm}
   \ds  \theta e^{(\beta+\gamma) T}
        ( \|{u}_0\|_{H_2^2(\co)}^2+
          \|f\|_{H^1(0,T;L^2(\co))}^2
                        +\|g\|_{H^1(0,T;L^2(\Gamma))}^2) ).
  \end{array}
  \end{equation}
We can now apply the Galerkin method.  When $q=0$, we work in $H^1(0,T;H^{1}(\co))$ $\cap $ $W^{1,\infty}(0,T;L^2(\co))$ , while
if $q\geq q_0>0$ \textit{a.e} we consider $H^1(0,T;H^{1}_1(\co))$ $\cap$ $ W^{1,\infty}(0,T;L^2(\co))$ $\cap$ $W^{1,\infty}(0,T;L^2(\Gamma))$. This gives a unique solution $w$. The regularity
$H^2$ is obtained for $q=0$ by the usual regularity results for the
Laplace equation with Neumann boundary condition, since
\begin{equation*}
\begin{array}{l}
-\Delta w= \ds \frac{1}{\nu}
(f -w_t -\div(\av w)+ \nabla \nu \cdot \nabla w)
\in L^\infty(0,T;L^2(\co)),\\
\ds  \partial_{\nv} w =\ds \frac{1}{\nu}(\av\cdot\nv  -p ) w
+\frac{1}{\nu}g  \in L^\infty(0,T;H^{1/2}(\Gamma)).
\end{array}
\end{equation*}
In the other case, we have that
\begin{equation*}
\begin{array}{l}
-\Delta w= \ds \frac{1}{\nu}
(f -w_t -\div(\av w)+ \nabla \nu \cdot \nabla w) \in L^\infty(0,T;L^2(\co)),\\
\ds \nu \partial_{\nv}- qs\Delta_\Gamma w=\ds \frac{1}{\nu}(\av\cdot\nv -p
-q\, (\partial_t + \divs{\Gamma}\rv -(\divs{\Gamma}s)\grads{\Gamma}))) w
+\frac{1}{\nu}g
   \in L^\infty(0,T;L^{2}(\Gamma)),
\end{array}
\end{equation*}
and we conclude like in \cite{szeftel:2003:ABC,szeftel:2005:ABC}.
\end{proof}
\subsection{Convergence analysis for Robin transmission conditions}
We suppose here the coefficients $q_i$ to be zero everywhere. Given
initial guess $(g_{i,j})$ on $L^2((0,T)\times \Gamma_{i,j})$ for $i\in
\inter{1,I}, j \in {\cal N}_i$, the algorithm reduces in each
subdomain to
\begin{subequations}\label{eq:algorobin}
  \begin{align}
     &\partial_t u_i^k +\div(\av_i u_i^k-\nu_i \nabla u_i^k)+c_i u_i^k =f
      \mbox{ in }\Omega_i\times(0,T),\label{eq:algorobin1}\\[3mm]
     &\bigl(\nu_i \partial_{\nv_i}-\av_i\cdot\nv_i
      \bigr)\,u_i^{k} +p_{i,j}\,u_i^{k} =
      \bigl(\nu_{j} \partial_{\nv_{i}}-
      \av_j\cdot\nv_i\bigr)\, u_{j}^{k-1} +p_{i,j}\,u_{j}^{k-1}
      \mbox{ on }\Gamma_{i,j}, j\in {\cal N}_i. \label{eq:algorobin2}
  \end{align}
\end{subequations}
The well-posedness for the boundary value problem in the previous
section permits to define the sequence of iterates. We now consider
the convergence of this sequence.
\begin{theorem}\label{th:convcont}
  For coefficients $p_{i,j}$ such that
  $p_{i,j}+p_{j,i}>0 \ a.e.$, the sequence
  $(u_i^k)_{k\in \NN}$ of solutions of \eqref{eq:algorobin} converges
  to the solution $u$ of problem \eqref{eq:adall}.
\end{theorem}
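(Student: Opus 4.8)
The plan is to run an energy estimate on the error and to repackage the interface contributions into a quantity that decreases monotonically in the iteration index. Set $e_i^k:=u_i^k-u$, where $u$ is understood restricted to $\Omega_i$. Subtracting \eqref{eq:dec} from \eqref{eq:algorobin} shows that $e_i^k$ solves the homogeneous equation $\partial_t e_i^k+\div(\av_i e_i^k-\nu_i\nabla e_i^k)+c_i e_i^k=0$ in $\Omega_i\times(0,T)$ with $e_i^k(\cdot,0)=0$, while \eqref{eq:algorobin2} couples the Robin trace of $e_i^k$ on $\Gamma_{i,j}$ to that of $e_j^{k-1}$. By Theorem~\ref{th:exfortefo} (case $q=0$) the iterates lie in $L^\infty(0,T;H^2(\Omega_i))$, so the conservative normal flux $F_i^k:=(\nu_i\partial_{\nv_i}-\av_i\cdot\nv_i)e_i^k$ has a trace in $L^2(0,T;L^2(\Gamma_{i,j}))$ and all manipulations below are licit. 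Since $\mathcal{S}_{i,j}$ carries no time derivative when $q=0$, the exponential shift $\hat e_i^k:=e^{-\lambda t}e_i^k$ solves the same problem with $c_i$ replaced by $c_i+\lambda$ and with unchanged transmission conditions; choosing $\lambda$ large makes $c_i+\lambda+\tfrac12\div\av_i$ large enough to absorb the lower-order boundary remainders appearing below, and proving $\hat e_i^k\to0$ yields $u_i^k\to u$ because $e^{\lambda t}$ is bounded on $[0,T]$.

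Testing the equation for $\hat e_i^k$ against $\hat e_i^k$ and integrating by parts exactly as in the proof of Theorem~\ref{th:exfortefo} gives, with $\|\cdot\|$ the $L^2(\Omega_i)$ norm,
$$\tfrac12\frac{d}{dt}\|\hat e_i^k\|^2+\int_{\Omega_i}\nu_i|\nabla\hat e_i^k|^2+\int_{\Omega_i}(c_i+\lambda+\tfrac12\div\av_i)(\hat e_i^k)^2=\sum_{j\in{\cal N}_i}\int_{\Gamma_{i,j}}\bigl(F_i^k+\tfrac12\av_i\cdot\nv_i\,\hat e_i^k\bigr)\hat e_i^k\,d\sigma,$$
the surface integrand being the energy flux $\nu_i\partial_{\nv_i}\hat e_i^k-\tfrac12\av_i\cdot\nv_i\hat e_i^k$. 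The crux is now the interface bookkeeping. On $\Gamma_{i,j}$ I introduce the outgoing and incoming Robin data $\phi_{i,j}^k:=-F_i^k+p_{j,i}\,\hat e_i^k$ and $\psi_{i,j}^k:=F_i^k+p_{i,j}\,\hat e_i^k$; since $\nv_j=-\nv_i$ converts the neighbour flux into $-F_j^{k-1}$, the transmission condition \eqref{eq:algorobin2} reads precisely $\psi_{i,j}^k=\phi_{j,i}^{k-1}$. The elementary identity $(F+p_{i,j}e)^2-(-F+p_{j,i}e)^2=2(p_{i,j}+p_{j,i})Fe+(p_{i,j}^2-p_{j,i}^2)e^2$ lets me solve for $F_i^k\hat e_i^k$, dividing by $p_{i,j}+p_{j,i}>0$ (exactly the hypothesis), so that the surface term becomes
$$\sum_{j\in{\cal N}_i}\int_{\Gamma_{i,j}}\frac{|\psi_{i,j}^k|^2-|\phi_{i,j}^k|^2}{2(p_{i,j}+p_{j,i})}\,d\sigma+\sum_{j\in{\cal N}_i}\int_{\Gamma_{i,j}}\tfrac12\bigl(p_{j,i}-p_{i,j}+\av_i\cdot\nv_i\bigr)(\hat e_i^k)^2\,d\sigma.$$
The sign-indefinite remainder has an $L^\infty(\Gamma_{i,j})$ coefficient and is controlled by the trace inequality \eqref{eq:tracetheorem}, $\|\hat e_i^k\|_{\Gamma_{i,j}}^2\le\varepsilon\|\nabla\hat e_i^k\|^2+C_\varepsilon\|\hat e_i^k\|^2$: the gradient part is absorbed by $\nu_0\|\nabla\hat e_i^k\|^2$ and the $L^2$ part by the enlarged reaction term once $\lambda$ is fixed.

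Summing over $i$, integrating on $(0,T)$, using $\hat e_i^k(0)=0$, the symmetry $p_{i,j}+p_{j,i}=p_{j,i}+p_{i,j}$ and $\psi_{i,j}^k=\phi_{j,i}^{k-1}$ (relabel $(i,j)\leftrightarrow(j,i)$), the sum of the $\psi$-terms at level $k$ equals the sum of the $\phi$-terms at level $k-1$. Writing $E^k:=\int_0^T\sum_i\sum_{j\in{\cal N}_i}\tfrac{|\phi_{i,j}^k|^2}{2(p_{i,j}+p_{j,i})}\,d\sigma\,dt$ and collecting the coercive volume contributions into $D_i^k\ge\tfrac{\nu_0}{2}\|\nabla\hat e_i^k\|^2+\delta\|\hat e_i^k\|^2\ge0$, I obtain the balance $\tfrac12\sum_i\|\hat e_i^k(T)\|^2+\int_0^T\sum_i D_i^k\,dt+E^k\le E^{k-1}$. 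Thus $(E^k)$ is nonnegative and nonincreasing; summing over $k\in\NN$ telescopes and bounds $\sum_k\int_0^T\sum_i D_i^k\,dt$ by $E^0$, finite by the $H^2$-regularity of the first iterate. Hence $\int_0^T\sum_i D_i^k\,dt\to0$, so $\hat e_i^k\to0$ in $L^2(0,T;H^1(\Omega_i))$, and undoing the shift gives $u_i^k\to u$.

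I expect the delicate point to be the interface algebra rather than the routine estimates: the transmission flux carries the full advection $\av_i\cdot\nv_i$ whereas the energy flux carries only half of it, and the two coefficients $p_{i,j}\neq p_{j,i}$ differ, so the difference-of-squares identity does not close on its own but leaves the surface remainder above. The scheme succeeds only because that remainder is genuinely lower order (controlled by \eqref{eq:tracetheorem} and the enlarged reaction coefficient), and because the single assumption $p_{i,j}+p_{j,i}>0$ is exactly what is needed both to invert the quadratic relation for $F_i^k\hat e_i^k$ and to turn $E^k$ into a bona fide nonnegative interface energy whose telescoping drives the convergence. A secondary technical point is the finiteness of $E^0$ when $p_{i,j}+p_{j,i}$ is only positive \emph{a.e.}, which is guaranteed by the $L^2(\Gamma_{i,j})$ regularity of the fluxes furnished by Theorem~\ref{th:exfortefo}.
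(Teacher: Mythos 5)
Your proof is correct and follows essentially the same route as the paper: the same energy estimate, the same difference-of-squares identity on the Robin traces to rewrite the flux term, the same absorption of the sign-indefinite surface remainder via the trace inequality \eqref{eq:tracetheorem}, and the same telescoping of the interface energies over subdomains and iterations. The only cosmetic difference is that you absorb the lower-order volume term by the exponential change of variables $e^{-\lambda t}$ up front, whereas the paper keeps that term and invokes Gr\"{o}nwall's lemma at the end --- these are equivalent.
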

\begin{proof}
By linearity, it is sufficient to prove that the sequence of iterates
converges to zero if $f=u_0=0$.

We multiply \eqref{eq:algorobin1} by $u_i^k$, integrate on $\Omega_i$,
and use the Green's formula. We obtain
\begin{multline}\label{eq:estcont1}
  \frac{1}{2}\frac{d\,}{dt}\|u_i^k(t,\cdot)\|^2_{L^2(\Omega_i)}
  +(\nu_i\nabla u_i^k,\nabla u_i^k)_{L^2(\Omega_i)}
+ ((c_i+\frac{1}{2}\div\av_i) u_i^k,u_i^k)_{L^2(\Omega_i)} \\ \hspace{-3cm}-\sum_{j\in{\cal
  N}_i}\int_{\Gamma_{i,j}} \bigl(\nu_i \partial_{\nv_i} u_i^k
  -\moit{\av_i\cdot\nv_i} u_i^k\bigr)\,u_i^k d\sigma = 0.
\end{multline}
We use now
\begin{multline}\label{tototiti}
  \bigl(\nu_i \partial_{\nv_i} u_i^k-\av_i\cdot\nv_i
  u_i^k+p_{i,j}u_i^k\bigr)^2
  -\bigl(\nu_i \partial_{\nv_i} u_i^k-\av_i\cdot\nv_i
  u_i^k-p_{j,i}u_i^k\bigr)^2 =\\
  2(p_{i,j}+p_{j,i})\bigl(\nu_i \partial_{\nv_i} u_i^k
  -\moit{\av_i\cdot\nv_i} u_i^k \bigr)u_i^k
  + (p_{i,j}+p_{j,i})(p_{i,j}-p_{j,i}-\av_i\cdot\nv_i)(u_i^k)^2.
\end{multline}
We replace the boundary term in \eqref{eq:estcont1}, and integrate in
time. Since the initial value vanishes, we have for any time $t$,
\begin{multline*}
  \|u_i^k(t)\|^2_{L^2(\Omega_i)}
  +2\int_{0}^t\bigl((\nu_i\nabla u_i^k,\nabla u_i^k)_{L^2(\Omega_i)}
       + ((c_i+\frac{1}{2}\div\av_i) u_i^k,u_i^k)_{L^2(\Omega_i)}\bigr) \,d\tau\\
  +\sum_{j\in{\cal N}_i}\int_{0}^t\int_{\Gamma_{i,j}}
\frac{1}{p_{i,j}+p_{j,i}}
  \bigl(\nu_i \partial_{\nv_i} u_i^k-{\av_i\cdot\nv_i}
  u_i^k-p_{j,i}u_i^k\bigr)^2 d\sigma \,d\tau \\
  =
  \sum_{j\in{\cal
  N}_i}\int_{0}^t\int_{\Gamma_{i,j}}
  \frac{1}{p_{i,j}+p_{j,i}}
  \bigl(\nu_i \partial_{\nv_i} u_i^k-{\av_i\cdot\nv_i}
  u_i^k+p_{i,j}u_i^k\bigr)^2d\sigma \,d\tau \\
  \hspace{-3cm}+\sum_{j\in{\cal
  N}_i}\int_{0}^t\int_{\Gamma_{i,j}}(p_{j,i}-p_{i,j} -\av_i\cdot\nv_i)
  (u_i^k)^2d\sigma \,d\tau.
\end{multline*}
Since the coefficients are all bounded, the last term in the
right-hand side can be handled by the trace theorem
\eqref{eq:tracetheorem} to be canceled with the terms in the
left-hand side like in the proof of Theorem \ref{th:convcont}.
%
We further insert the transmission condition in the right-hand side:
\begin{multline*}
  \|u_i^k(t)\|^2_{L^2(\Omega_i)}+\nu_0\int_{0}^t\|\nabla u_i^k\|^2_{L^2(
  \Omega_i)}\,d\tau
  +\sum_{j\in{\cal N}_i}\int_{0}^t\int_{\Gamma_{i,j}}
  \frac{1}{p_{i,j}+p_{j,i}}
  \bigl(\nu_i \partial_{\nv_i} u_i^k-\av_i\cdot\nv_i
  u_i^k-p_{j,i}u_i^k\bigr)^2 d\sigma\,d\tau \\
   \le
  \sum_{j\in{\cal N}_i}\int_{0}^t\int_{\Gamma_{i,j}}
  \frac{1}{p_{i,j}+p_{j,i}}
  (\nu_{j} \partial_{\nv_i} u_j^{k-1}-\av_{j}\cdot
  \nv_{i} u_{j}^{k-1}\, +p_{i,j}u_{j}^{k-1} \bigr)^2d\sigma\,d\tau +
  C_1\int_{0}^t\|u_i^k\|^2_{L^2(\Omega_{i})}.
\end{multline*}
We sum on the subdomains, and on the iterations, the boundary terms
cancel out except the first and last ones, and we obtain for any $t
\in (0,T)$,
\begin{multline}\label{eq:estcontal}
  \sum_{k\in \inter{1,K}}\sum_{i\in \inter{1,I}}\biggl(
  \|u_i^k(t)\|^2_{L^2(\Omega_i)}+\nu_0\int_{0}^t\|\nabla u_i^k\|^2_{L^2(
  \Omega_i)}\,d\tau
  \biggr)
  \le
  \alpha(t)
  + C_1\sum_{k\in \inter{1,K}}\sum_{i\in \inter{1,I}}\int_{0}^t\|u_i^k\|^2_{
  L^2(\Omega_{i})},
\end{multline}
with
\[
  \alpha(t)=
  \sum_{i\in \inter{1,I}} \sum_{j\in{\cal N}_i}
  \int_{0}^t\int_{\Gamma_{i,j}}
  \frac{1}{p_{i,j}+p_{j,i}}
  (\nu_{j} \partial_{\nv_i} u_{j}^{0}-\av_{j}\cdot
  \nv_{i} u_{j}^{0}\, +p_{i,j}u_{j}^{0} \bigr)^2 d\sigma\,d\tau.
\]
We now apply Gr\"{o}nwall's lemma and obtain that for any $K > 0$,
\[
 \ds\sum_{k\in \inter{1,K}}\sum_{i\in
  \inter{1,I}}\|u_i^k(t)\|^2_{L^2(\Omega_i)}
  \le
  \alpha(T) e^{C_1 T},
  \]
%
%
%
which proves that the sequence $u_i^k $ converges to zero in $L^2 ((0,T)\times \Omega_i) $ for each $i$, and concludes the proof of the theorem.
\end{proof}
\begin{remark}
  In the case $\div \av =0$, if $p_{j,i}-p_{i,j}-\av_i \cdot \nv_i=0$ and $c_i \ge \alpha_0 >0$,
  then $C_1=0$ in \eqref{eq:estcontal} and we conclude without using Gr\"{o}nwall's lemma.
\end{remark}

\subsection{Order 2 transmission conditions}
\leavevmode\par
\noindent\begin{theorem}\label{th:convcontorder2}
  Assume $p_{i,j} \in W^{1,\infty}(\Omega_i), \ p_{i,j}+p_{j,i}>0 \ a.e.$,
  $q_{i,j}=q > 0$, $\av_i\in (W^{1,\infty}(\Omega_i))^N$, $\nu_i\in
  W^{1,\infty}(\Omega)$, $\rv_{i,j}\in
  (W^{1,\infty}(\Omega_i))^{N-1}$, with $\rv_{i,j}=\rv_{j,i} \ on
  \ \Gamma_{i,j}$, $s_{i,j}\in W^{1,\infty}(\Omega_i)$, $s_{i,j}>0$
with $s_{i,j}=s_{j,i} \ on \ \Gamma_{i,j}$, and the domain is cut in bands as in Figure \ref{fig:decompb}, right. Then, the algorithm \eqref{eq:algoorder2} converges in each subdomain to the solution $u$ of problem \eqref{eq:adall}.
\end{theorem}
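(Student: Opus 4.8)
The plan is to mimic the energy argument used for the Robin case in Theorem \ref{th:convcont}, but now carrying the full Ventcell operator $\mathcal{S}_{i,j}$ through the computation. By linearity it suffices to show that the iterates converge to zero when $f=u_0=0$, so I would set up the homogeneous problem and multiply \eqref{eq:algorogral1} by $u_i^k$, integrating over $\Omega_i$. Applying Green's formula produces the same volume terms as before (the diffusion term $(\nu_i\nabla u_i^k,\nabla u_i^k)$ and the reaction/advection term involving $c_i+\frac12\div\av_i$) together with a boundary term on each $\Gamma_{i,j}$. The essential new feature is that, because of the tangential operators in \eqref{eq:boundaryoperators}, the boundary contribution now involves $\partial_t u_i^k$ and $\divs{\Gamma_{i,j}}(\rv_{i,j}u_i^k - s_{i,j}\grads{\Gamma_{i,j}}u_i^k)$. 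After a tangential integration by parts on the closed (band) interface, this should yield, in addition to the $L^2(\Gamma)$ terms, a surface-gradient energy $qs_{i,j}\|\grads{\Gamma_{i,j}}u_i^k\|_{\Gamma_{i,j}}^2$ and a time-derivative term $\frac{q}{2}\frac{d}{dt}\|u_i^k\|_{\Gamma_{i,j}}^2$, which is exactly why the natural energy is $m(u_i^k,u_i^k)=\|u_i^k\|^2+(q u_i^k,u_i^k)_\Gamma$ rather than just the $L^2(\Omega_i)$ norm.

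The heart of the argument is again an algebraic identity of the same shape as \eqref{tototiti}, but for the order-2 operator. I would expand
\begin{equation*}
  \bigl(\Lambda_i u_i^k + \mathcal{S}_{i,j}u_i^k\bigr)^2
  - \bigl(\Lambda_i u_i^k - \mathcal{S}_{j,i}u_i^k\bigr)^2,
\end{equation*}
where $\Lambda_i = \nu_i\partial_{\nv_i} - \av_i\cdot\nv_i$ denotes the flux operator, and check that the cross term reproduces exactly $2(\mathcal{S}_{i,j}+\mathcal{S}_{j,i})$ times the flux-times-trace boundary integrand appearing after Green's formula. The symmetry hypotheses $\rv_{i,j}=\rv_{j,i}$ and $s_{i,j}=s_{j,i}$ on $\Gamma_{i,j}$ are precisely what make $\mathcal{S}_{i,j}+\mathcal{S}_{j,i}$ reduce to a multiplication operator by $p_{i,j}+p_{j,i}$ on the closed interface (the $\partial_t$ and tangential parts are common and cancel under the difference), so that dividing by the positive quantity $p_{i,j}+p_{j,i}$ is legitimate. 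This lets me rewrite the boundary term as the difference of two squares weighted by $1/(p_{i,j}+p_{j,i})$ exactly as in the Robin proof, after absorbing the lower-order tangential remainders.

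Having set this up, I would substitute the transmission condition \eqref{eq:algogal2} to replace the outgoing square for $u_i^k$ by the incoming square built from $u_j^{k-1}$, integrate in time, and then sum over subdomains $i$ and iterations $k$. As in the Robin case, the telescoping structure makes all the interface squares cancel except the first (iteration $k=0$ data, giving the term $\alpha(t)$) and the last. The volume diffusion term and the surface-gradient term are coercive and stay on the left-hand side, while the non-coercive lower-order pieces—the advection/reaction remainder on $\Omega_i$ and the tangential remainder terms $\frac{q}{2}\divs{\Gamma}\rv_{i,j}$ on $\Gamma_{i,j}$—are controlled by $m(u_i^k,u_i^k)$ using the trace theorem \eqref{eq:tracetheorem} and the boundedness of the coefficients. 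A final application of Gr\"onwall's lemma to the accumulated energy $\sum_{k,i} m(u_i^k(t),u_i^k(t))$ yields a bound by $\alpha(T)e^{CT}$ uniform in the number of iterations, forcing $u_i^k\to 0$ in $L^2((0,T)\times\Omega_i)$.

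I expect the main obstacle to be the careful bookkeeping of the tangential-derivative terms: showing that after integration by parts along each $\Gamma_{i,j}$ the $s_{i,j}$-term produces a genuinely nonnegative surface energy, that $\mathcal{S}_{i,j}+\mathcal{S}_{j,i}$ collapses to scalar multiplication under the symmetry assumptions (so the square-difference identity goes through with a strictly positive weight), and that all remaining tangential remainders are bounded by $m(\cdot,\cdot)$ with constants independent of $k$. The fact that the interface is closed (the band geometry) is crucial here, since it removes boundary-of-boundary contributions from the tangential integration by parts; this is where the restriction to the decomposition in bands, as opposed to the corner geometry allowed for Robin, enters the proof.
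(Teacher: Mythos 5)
Your overall skeleton (homogeneous problem, difference of squares on the interface, substitution of the transmission condition, telescoping over subdomains and iterations, Gr\"onwall) is the right one, but the central algebraic step is wrong, and this breaks the proof. Writing $\Lambda_i=\nu_i\partial_{\nv_i}-\av_i\cdot\nv_i$ and expanding
\begin{equation*}
\bigl(\Lambda_i u + \mathcal{S}_{i,j}u\bigr)^2-\bigl(\Lambda_i u - \mathcal{S}_{j,i}u\bigr)^2
=\bigl(\mathcal{S}_{i,j}u+\mathcal{S}_{j,i}u\bigr)\bigl(2\Lambda_i u+\mathcal{S}_{i,j}u-\mathcal{S}_{j,i}u\bigr),
\end{equation*}
the operator that multiplies the flux is the \emph{sum} $\mathcal{S}_{i,j}+\mathcal{S}_{j,i}$, not the difference. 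Under the symmetry hypotheses $q_{i,j}=q_{j,i}=q$, $\rv_{i,j}=\rv_{j,i}$, $s_{i,j}=s_{j,i}$ it is the difference $\mathcal{S}_{i,j}-\mathcal{S}_{j,i}=p_{i,j}-p_{j,i}$ that collapses to a scalar; the sum equals $(p_{i,j}+p_{j,i})+2q\bigl(\partial_t+\divs{\Gamma_{i,j}}(\rv_{i,j}\,\cdot\, - s_{i,j}\grads{\Gamma_{i,j}}\,\cdot\,)\bigr)$, so the cross term genuinely contains $q\,\nu_i\partial_{\nv_i}u_i^k\,\bigl(\partial_t u_i^k+\divs{\Gamma_{i,j}}(\rv_{i,j}u_i^k-s_{i,j}\grads{\Gamma_{i,j}}u_i^k)\bigr)$; this is exactly what appears in \eqref{eq:esttransmcd}. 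Multiplying the equation by $u_i^k$ alone only produces the boundary term $\int_{\Gamma_{i,j}}(\nu_i\partial_{\nv_i}u_i^k-\frac{\av_i\cdot\nv_i}{2}u_i^k)\,u_i^k$: Green's formula knows nothing about the boundary operator, so your claim that the tangential operators and $\partial_t u_i^k$ already appear in that boundary contribution is incorrect, and the single-multiplier estimate can never generate the flux-times-$\partial_t$ and flux-times-tangential cross terms needed to complete the square.

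The missing idea is that one must use \emph{four} multipliers, not one. The paper multiplies \eqref{eq:algorogral1} successively by $\beta_i^2 u_i^k$, $\partial_t u_i^k$, $\divts{\Gamma_{i,j}}(\psi_{i,j}^2\rv_{i,j}u_i^k)$ and $-\divts{\Gamma_{i,j}}(\psi_{i,j}^2 s_{i,j}\gradts{\Gamma_{i,j}}u_i^k)$, and adds the resulting identities (the last three weighted by $q$) so that the boundary terms reassemble into $\int_{\Gamma_{i,j}}\nu_i\partial_{\nv_i}u_i^k\,\mathcal{S}_{i,j}u_i^k$ modulo controllable remainders. This forces auxiliary machinery your plan does not contain: the tangential operators must be extended off $\Gamma_{i,j}$ into $\Omega_i$ with cutoffs $\psi_{i,j}$ so that the last two multipliers are legitimate volume test functions; the natural energies become $\|\beta_i u_i^k(t)\|_i^2$, $q\interleave u_i^k(t)\interleave_i^2$, $q\|\psi_{i,j}\sqrt{s_{i,j}}\,\gradts{\Gamma_{i,j}}u_i^k(t)\|_i^2$ and $q\int_0^t\|\partial_t u_i^k\|_i^2$ (volume quantities, not $m(\cdot,\cdot)$); and second-order tangential derivatives $\nabla\gradts{\Gamma_{i,j}}u_i^k$ appear and must be absorbed by the coercive contribution of the $s_{i,j}$-multiplier. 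None of this is reachable from the argument as you have set it up, so the proof does not go through without the additional multiplications.
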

\begin{proof}
We first need some results in differential geometry. For any $i \in \inter{1,I}$,
For every $j \in {\cal N}_i$, the
normal vector $ \nv_{i}$ can be extended in a neighbourhood of
$\Gamma_{i,j}$ in $\Omega_i$ as a smooth function $\tilde{\nv}_{i}$
with length one. Let $\psi_{i,j} \in {\cal
  C}^\infty(\overline{\Omega_i})$, such that $\psi_{i,j} \equiv 1$ in
a neighbourhood of $\Gamma_{i,j}$, $\psi_{i,j} \equiv 0$ in a
neighbourhood of $\Gamma_{i,k}$ for $k \in {\cal N}_i, \, k \neq j$
and $\sum_{j \in {\cal N}_i}\psi_{i,j}>0$ on $\Omega_i$.  We can assume that
$\tilde{\nv}_{i}$ is  defined on a neighbourhood of the support of
$\psi_{i,j}$. We extend the tangential gradient and divergence
operators in the support of $ \psi_{i,j}$ by:
\begin{eqnarray}
\gradts{\Gamma_{i,j}}\phi:= \nabla \phi-(\partial_{\tilde{\nv}_{i}}\phi)\tilde{\nv}_{i},\quad
\divts{\Gamma_{i,j}}\xv:=\div(\xv-(\xv \cdot \tilde{\nv}_{i})\tilde{\nv}_{i}).\nonumber
\end{eqnarray}
It is easy to see that
$(\gradts{\Gamma_{i,j}}\phi)_{|\Gamma_{i,j}}=\grads{\Gamma_{i,j}}\phi$,
$(\divts{\Gamma_{i,j}}\xv)_{|\Gamma_{i,j}}=\divs{\Gamma_{i,j}}\xv$ and
for $\xv$ and $\chi$ with support in $supp(\psi_{i,j})$, we have
\begin{equation}\label{japhet_plenary_eq:divs}
\int_{\Omega_i} (\divts{\Gamma_{i,j}}\xv)\, \chi\, dx
=-\int_{\Omega_i} \xv \cdot \gradts{\Gamma_{i,j}} \chi\, dx.
\end{equation}
Now we prove Theorem \ref{th:convcontorder2}. We consider the
algorithm \eqref{eq:algoorder2} on the error, so we suppose
$f=u_0=0$. We set $\|\varphi\|_i=\|\varphi\|_{L^2(\Omega_i)}$,
$\interleave\varphi\interleave_i^2= \|\sqrt{\nu_i}\,\nabla
\varphi\|^2_i$,
$\|\varphi\|_{i,\infty}=\|\varphi\|_{L^{\infty}(\Omega_i)}$,
$\|\varphi\|_{i,1,\infty}=\|\varphi\|_{W^{1,\infty}(\Omega_i)}$ and
$\beta_{i,j}=\sqrt{\frac{p_{i,j}+p_{j,i}}{2}}$.
\vspace{3mm}

The proof is based on energy estimates containing the term
$$\int_0^t \int_{\Gamma_{i,j}} \left(\nu_i \partial_{\nv_i}u_i^k
-\av_i\cdot\nv_iu_i^k + \mathcal{S}_{i,j}
u_i^k\right)^2\,d\sigma\,d\tau,
$$
and that we derive by multiplying successively the first equation of
\eqref{eq:algoorder2} by the terms $\beta_i^2\,u_i^k$, $\partial_t
u_i^k$, $\divts{\Gamma_{i,j}}(\psi_{i,j}^2\rv_{i,j} u_i^k )$ and
$-\divts{\Gamma_{i,j}} (\psi_{i,j}^2 s_{i,j} \,\gradts{\Gamma_{i,j}}\,
u_i^k)$.

We multiply the first equation of \eqref{eq:algoorder2} by
$\beta_i^2\,u_i^k$, integrate on $(0,t)\times \Omega_i$ and
integrate by parts in space,
 \begin{multline}\label{eq:estcontorder2-1}
  \frac{1}{2}\|\beta_iu_i^k(t)\|^2_i
   +\int_0^t\interleave \beta_iu_i^k(\tau,\cdot)\interleave_i^2 \,d\tau
   -\int_0^t \int_{\Omega_i} \beta_i(\av_i \cdot \nabla \beta_i)(u_i^k)^2 \,dx\,d\tau
    \hspace{1cm}\\
   +\int_0^t \int_{\Omega_i} (c_i+\frac{1}{2}\div\av_i)\beta_i^2(u_i^k)^2 \,dx\,d\tau
   -  \int_0^t \int_{\Omega_i} \nu_i|\nabla \beta_i|^2(u_i^k)^2 \,dx\,d\tau \\
   -\int_0^t \int_{\Gamma_{i,j}} \beta_{i,j}^2(\nu_i\partial_{\nv_i} u_i^k
   - \frac{\av_i\cdot\nv_i}{2}u_i^k)\, u_i^k \, d\sigma\,d\tau
   =0.
 \end{multline}
We multiply the first equation of \eqref{eq:algoorder2} by $\partial_t
u_i^k$, integrate on $(0,t)\times \Omega_i$ and then integrate by
parts in space,
\begin{multline}\label{eq:estcontorder2-2}
  \hspace{-8mm}
   \int_0^t \|\partial_t u_i^k\|^2_{i} \,d\tau
 + \frac{1}{2}\interleave u_i^k(t)\interleave_i^2
 +\int_0^t\int_{\Omega_i}(c_iu_i^k+\div(\av_i u_i^k)) \, \partial_t  u_i^k
 \,dx\,d\tau
 - \int_0^t \int_{\Gamma_{i,j}} \nu_i\partial_{\nv_i} u_i^k
  \, \partial_t u_i^k \, d\sigma\,d\tau
 =0.
\end{multline}
We multiply the first equation of \eqref{eq:algoorder2} by
$\divts{\Gamma_{i,j}} (\psi_{i,j}^2\rv_{i,j} u_i^k )$ integrate on
$(0,t)\times \Omega_i$ and integrate by parts in space:
\begin{multline}\label{eq:estcontorder2-3-1}
  \int_0^t\int_{\Omega_i}\partial_t u_i^k
    \divts{\Gamma_{i,j}}(\psi_{i,j}^2\rv_{i,j} u_i^k)\,dx\,d\tau
   +\int_0^t\int_{\Omega_i}\div(\av_i u_i^k) \,
    \divts{\Gamma_{i,j}}(\psi_{i,j}^2\rv_{i,j} u_i^k)\,dx\,d\tau \\
   +\int_0^t\int_{\Omega_i} c_i u_i^k
       \divts{\Gamma_{i,j}}(\psi_{i,j}^2\rv_{i,j} u_i^k) \,dx\,d\tau\\
   + \int_0^t\int_{\Omega_i} \nu_i \nabla u_i^k \cdot
     \nabla\divts{\Gamma_{i,j}}(\psi_{i,j}^2\rv_{i,j} u_i^k)
      \,dx\,d\tau
  -\int_0^t\int_{\Gamma_{i,j}} \nu_i\partial_{\nv_i} u_i^k
     \, \divs{\Gamma_{i,j}}(\rv_{i,j} u_i^k)  \,d\sigma\,d\tau
   =0.
\end{multline}
We observe that
\begin{multline}\label{eq:estcontorder2-3-2}
 \int_0^t\int_{\Omega_i} \nu_i \nabla u_i^k \cdot
   \nabla\divts{\Gamma_{i,j}}(\psi_{i,j}^2\rv_{i,j} u_i^k)
    \,dx\,d\tau\\
= \int_0^t\int_{\Omega_i} \nu_i \nabla u_i^k \cdot
   \nabla(\divts{\Gamma_{i,j}}(\psi_{i,j}^2\rv_{i,j}) u_i^k)
    \,dx\,d\tau
+\int_0^t\int_{\Omega_i} \nu_i \nabla u_i^k \cdot
   \nabla (\psi_{i,j}^2\rv_{i,j} \cdot \gradts{\Gamma_{i,j}}\, u_i^k)
    \,dx\,d\tau,
\end{multline}
with
\begin{multline}\label{eq:estcontorder2-3-3}
  \int_0^t\int_{\Omega_i} \nu_i \nabla u_i^k \cdot
    \nabla (\psi_{i,j}^2\rv_{i,j} \cdot \gradts{\Gamma_{i,j}}\, u_i^k)
     \,dx\,d\tau\\
  \ge -{1 \over 4}\int_0^t \| \psi_{i,j}\sqrt{\nu_i\, s_{i,j}}\
  \nabla \gradts{\Gamma_{i,j}}\, u_i^k \|_i^2 \,d\tau
  -C \int_0^t\int_{\Omega_i} (\|\nabla u_i^k\|_i^2+\|u_i^k\|_i^2)
  \,dx\,d\tau.
\end{multline}
Replacing \eqref{eq:estcontorder2-3-3} in \eqref{eq:estcontorder2-3-2}
and then \eqref{eq:estcontorder2-3-2} in \eqref{eq:estcontorder2-3-1},
we obtain
\begin{equation}\label{eq:estcontorder2-3}
\begin{array}{l}
 \hspace{-4mm}{\small \mbox{$ \ds\int_0^t\int_{\Omega_i}\left(
  \partial_t u_i^k \divts{\Gamma_{i,j}}(\psi_{i,j}^2\rv_{i,j} u_i^k)
  + \div(\av_i u_i^k) \,
    \divts{\Gamma_{i,j}}(\psi_{i,j}^2\rv_{i,j} u_i^k)
    + c_i u_i^k
    \divts{\Gamma_{i,j}}(\psi_{i,j}^2\rv_{i,j} u_i^k) \right)\,dx\,d\tau$}} \\
  \ds\hspace{15mm} -{1 \over 4}\int_0^t \| \psi_{i,j}\sqrt{\nu_i\, s_{i,j}}\
  \nabla \gradts{\Gamma_{i,j}}\, u_i^k )\|_i^2 \,d\tau
  -\int_0^t\int_{\Gamma_{i,j}} \nu_i\partial_{\nv_i} u_i^k
   \, \divs{\Gamma_{i,j}}(\rv_{i,j} u_i^k)  \,d\sigma\,d\tau\\
  \ds\hspace{30mm}\le C \int_0^t\int_{\Omega_i} (\|\sqrt{\nu_i}\nabla u_i^k\|_i^2+\|\beta_iu_i^k\|_i^2)
  \,dx\,d\tau.
  \end{array}
\end{equation}
Now we multiply the first equation of \eqref{eq:algoorder2} by
$-\divts{\Gamma_{i,j}} (\psi_{i,j}^2 s_{i,j} \,\gradts{\Gamma_{i,j}}\,
u_i^k)$ integrate on $(0,t)\times \Omega_i$ and integrate by parts in
space:
\begin{multline}\label{eq:estconto2-4}
  \frac{1}{2} \| \psi_{i,j}\sqrt{s_{i,j}} \, \gradts{\Gamma_{i,j}} \,u_i^k (t)\|_i^2
   -\int_0^t\int_{\Omega_i}\div(\av_i u_i^k) \, \divts{\Gamma_{i,j}}
      (\psi_{i,j}^2 s_{i,j} \, \gradts{\Gamma_{i,j}}\, u_i^k)\,dx\,d\tau\\
    \hspace{-4mm}{\small \mbox{$ \ds+\int_0^t\int_{\Omega_i} \psi_{i,j}^2 s_{i,j} \gradts{\Gamma_{i,j}}( c_i u_i^k )
     \cdot\gradts{\Gamma_{i,j}}\, u_i^k \,dx\,d\tau
   - \int_0^t\int_{\Omega_i} \nu_i \nabla u_i^k \cdot
    \nabla (\divts{\Gamma_{i,j}}
     (\psi_{i,j}^2 s_{i,j} \, \gradts{\Gamma_{i,j}}\, u_i^k))
    \,dx\,d\tau$}}\\
   + \int_0^t\int_{\Gamma_{i,j}} \nu_i\partial_{\nv_i} u_i^k
    \, \divs{\Gamma_{i,j}}(s_{i,j} \,
    \grads{\Gamma_{i,j}}\, u_i^k)  \,d\sigma\,d\tau
   =0.
\end{multline}
We have,
\begin{align}\label{eq:estgradgrad}
  - \int_0^t\int_{\Omega_i} &\nu_i \nabla u_i^k \cdot
   \nabla (\divts{\Gamma_{i,j}}
       (\psi_{i,j}^2 s_{i,j} \, \gradts{\Gamma_{i,j}}\, u_i^k))
   \,dx\,d\tau \nonumber\\
  &\ge
    {1 \over 2}\int_0^t \| \psi_{i,j}\sqrt{\nu_{i}\, s_{i,j}}\
  \nabla \gradts{\Gamma_{i,j}}\, u_i^k )\|_i^2 \,d\tau
  - C_1 \int_0^t \|\nabla u_i^k \|_i^2 \,d\tau.
\end{align}
%
Replacing \eqref{eq:estgradgrad} in \eqref{eq:estconto2-4} leads to
\begin{multline}\label{eq:estcontorder2-4}
  \frac{1}{2} \| \psi_{i,j}\sqrt{s_{i,j}} \, \gradts{\Gamma_{i,j}} \,u_i^k (t)\|_i^2
  +{1 \over 2}\int_0^t \|\psi_{i,j}\sqrt{\nu_{i}\, s_{i,j}}\
  \nabla \gradts{\Gamma_{i,j}}\, u_i^k )\|_i^2 \,d\tau\\[-1mm]
    +\int_0^t\int_{\Omega_i}\psi_{i,j}^2 s_{i,j}\,c_i |\gradts{\Gamma_{i,j}}u_i^k|^2\,dx\,d\tau
  + \int_0^t\int_{\Gamma_{i,j}} \nu_i \partial_{\nv_i}u_i^k
     \, \divs{\Gamma_{i,j}}( s_{i,j} \,
    \grads{\Gamma_{i,j}}\, u_i^k)  \,d\sigma\,d\tau\\[-1mm]
\hspace{-0.5cm}
    \le
   \int_0^t\int_{\Omega_i}\div(\av_i u_i^k) \,
    \divts{\Gamma_{i,j}}
    (\psi_{i,j}^2  s_{i,j} \, \gradts{\Gamma_{i,j}}\, u_i^k)\,dx\,d\tau
  +C\int_0^t \|\sqrt{\nu_i}\nabla u_i^k\|_i^2\,d\tau.
\end{multline}
Multiplying \eqref{eq:estcontorder2-2}, \eqref{eq:estcontorder2-3} and
\eqref{eq:estcontorder2-4} by $q$, and adding the three equations with
\eqref{eq:estcontorder2-1}, we get
 \begin{multline*}
   \frac{1}{2} \left(
     \|\beta_iu_i^k(t)\|^2_i
     + q\interleave u_i^k(t)\interleave_i^2
     +q \| \psi_{i,j}\sqrt{s_{i,j}} \, \gradts{\Gamma_{i,j}} \,u_i^k (t)\|_i^2
     \right)
 + \int_0^t\interleave \beta_i u_i^k(\tau,\cdot)\interleave_i^2 \,d\tau\\
 +q \int_0^t \|\partial_t u_i^k\|^2_{i} \,d\tau
 +\frac{q}{4}
    \int_0^t \| \psi_{i,j}\sqrt{\nu_i \,s_{i,j}}\ \nabla
    \gradts{\Gamma_{i,j}}\, u_i^k \|_i^2 \,d\tau\\
 -\int_0^t \int_{\Gamma_{i,j}} \beta_i^2(\nu_i\partial_{\nv_i} u_i^k
   - \frac{\av_i\cdot\nv_i}{2}u_i^k)\, u_i^k
  \, d\sigma\,d\tau\\
 -q
    \int_0^t \int_{\Gamma_{i,j}} \nu_i\partial_{\nv_i} u_i^k
  \, \left(\partial_t u_i^k
  + \divs{\Gamma_{i,j}}(\rv_{i,j} u_i^k)
  - \divs{\Gamma_{i,j}}(s_{i,j} \, \grads{\Gamma_{i,j}}\, u_i^k)\right)
    \,d\sigma\,d\tau\\
\le
 q(\frac{1}{2}\|\av_i\|_{i,1,\infty}
  + \|\rv_{i,j}\|_{i,1,\infty})
 \int_0^t \|u_i^k\|_i \, \|\partial_t u_i^k \|_i \,d\tau\\
 + q(\|\av_i\|_{i,\infty}
    +\|\rv_{i,j}\|_{i,\infty})
  \int_0^t \|\nabla u_i^k\|_i \, \|\partial_t u_i^k \|_i
\,d\tau\\
 + \frac{q}{2} \|\av_i\|_{i,\infty}
   \int_0^t \|\nabla u_i^k\|_i \,
     \|\divts{\Gamma_{i,j}}
 (\psi_{i,j}^2 s_{i,j} \, \gradts{\Gamma_{i,j}}\, u_i^k)\|_i\,d\tau \\
 +  q(\frac{1}{2}\|\av_i\|_{i,1,\infty} +\|c_i\|_{i,\infty})
   \int_0^t \| u_i^k\|_i \,
     \|\divts{\Gamma_{i,j}}
 (\psi_{i,j}^2 s_{i,j} \, \gradts{\Gamma_{i,j}}\, u_i^k)\|_i\,d\tau \\
     +\frac{q}{2}(\|\av_i\|_{i,1,\infty} +\|c_i\|_{i,\infty})\|u_i^k(t)\|_i^2
 + C \left(\int_0^t \|\beta_iu_i^k\|_i^2 \,d\tau
    +q \int_0^t \|\sqrt{\nu_i}\nabla u_i^k\|_i^2 \,d\tau \right).
\end{multline*}
We bound the right-hand side by
\begin{multline*}
 {1 \over 2} \left(
 \frac{q}{2} \int_0^t \|\partial_t u_i^k \|_i^2 \,d\tau
 +\frac{q}{4} \int_0^t
     \|\psi_{i,j}\sqrt{\nu_i \, s_{i,j}} \ \nabla \gradts{\Gamma_{i,j}}\, u_i^k\|_i^2\,d\tau
  \right)\\
   +\frac{q}{2}(\|\av_i\|_{i,1,\infty} +\|c_i\|_{i,\infty})\|u_i^k(t)\|_i^2
 + C \left(\int_0^t \|\beta_iu_i^k\|_i^2 \,d\tau
    +q \int_0^t \|\sqrt{\nu_i}\nabla u_i^k\|_i^2 \,d\tau \right).
\end{multline*}
We simplify the terms which appear on both sides, and obtain
\begin{multline}\label{eq:estbtm4}
 \frac{1}{2} \left(
   \|\beta_iu_i^k(t)\|^2_i
   + q\interleave u_i^k(t)\interleave_i^2
   +q \| \psi_{i,j}\sqrt{s_{i,j}} \, \gradts{\Gamma_{i,j}} \,u_i^k (t)\|_i^2
   \right)
 + \int_0^t\interleave \beta_iu_i^k(\tau,\cdot)\interleave_i^2 \,d\tau\\
 +\frac{q}{2} \int_0^t \|\partial_t u_i^k\|^2_{i} \,d\tau
 +\frac{q}{8}
    \int_0^t \|\psi_{i,j} \sqrt{\nu_i \, s_{i,j}}\ \nabla
    \gradts{\Gamma_{i,j}}\, u_i^k \|_i^2 \,d\tau\\
 -\int_0^t \int_{\Gamma_{i,j}} \beta_i^2(\nu_i\partial_{\nv_i} u_i^k
   - \frac{\av_i\cdot\nv_i}{2}u_i^k)\, u_i^k
  \, d\sigma\,d\tau\\
 -q
    \int_0^t \int_{\Gamma_{i,j}} \nu_i\partial_{\nv_i} u_i^k
  \, \left(\partial_t u_i^k
+ \divs{\Gamma_{i,j}}(\rv_{i,j} u_i^k)
  - \divs{\Gamma_{i,j}}(s_{i,j} \, \grads{\Gamma_{i,j}}\, u_i^k)\right)
    \,d\sigma\,d\tau\\
\le
C \left(\int_0^t \|\beta_iu_i^k\|_i^2 \,d\tau
    +q \int_0^t \|\sqrt{\nu_i}\nabla u_i^k\|_i^2 \,d\tau \right).
\end{multline}
%
Recalling that $s_{i,j}=s_{j,i}$ on $\Gamma_{i,j}$ and
$\rv_{i,j}=\rv_{j,i}$ on $\Gamma_{i,j}$, we use now:
\begin{multline}\label{eq:esttransmcd}
  \left(\nu_i \partial_{\nv_i} u_i^k
   - \av_i\cdot\nv_i u_i^k + \mathcal{S}_{i,j} u_i^k\right)^2
- \left(\nu_i \partial_{\nv_i} u_i^k
   - \av_i\cdot\nv_i u_i^k - \mathcal{S}_{j,i} u_i^k\right)^2\\
\hspace{-4mm}{\small \mbox{$ \ds = 4\left(\beta_{i,j}^2( \nu_i \partial_{\nv_i} u_i^k
   - \frac{\av_i\cdot\nv_i}{2}u_i^k)u_i^k + q \nu_i \partial_{\nv_i} u_i^k
   (\partial_t u_i^k
+\divs{\Gamma_{i,j}}(\rv_{i,j} u_i^k)
-\divs{\Gamma_{i,j}}(s_{i,j} \, \grads{\Gamma_{i,j}}\, u_i^k))\right)$}}
\\
    +2q(p_{i,j}-p_{j,i}-2\av_i\cdot\nv_i)(\partial_t u_i^k
+\divs{\Gamma_{i,j}}(\rv_{i,j} u_i^k)
-\divs{\Gamma_{i,j}}(s_{i,j} \, \grads{\Gamma_{i,j}}\, u_i^k))u_i^k\\
+(p_{i,j}+p_{j,i})(p_{i,j}-p_{j,i}-\av_i\cdot\nv_i)(u_i^k)^2.
\end{multline}
Replacing \eqref{eq:esttransmcd} into \eqref{eq:estbtm4}, we
obtain
\vspace{-1.5mm}
\begin{multline}\label{eq:esttransmcd2}
  {1\over 2}\left(
   \|\beta_{i} u_i^k(t)\|^2_i
   + q\interleave u_i^k(t)\interleave_i^2
   +q \|\psi_{i,j}\sqrt{s_{i,j}} \, \gradts{\Gamma_{i,j}} \,u_i^k (t)\|_i^2
   \right)
 + \int_0^t\interleave \beta_{i}u_i^k(\tau,\cdot)\interleave_i^2 \,d\tau \\
 +\frac{q}{2} \int_0^t \|\partial_t u_i^k\|^2_{i} \,d\tau
 +\frac{1}{4}
   \int_0^t \int_{\Gamma_{i,j}}
    \left(\nu_i \partial_{\nv_i}u_i^k
  -\av_i\cdot\nv_i \, u_i^k - \mathcal{S}_{j,i} u_i^k\right)^2\,d\sigma\,d\tau \\
  +\frac{q}{8}
    \int_0^t \| \psi_{i,j}\sqrt{\nu_i \, s_{i,j}}\ \nabla
    \gradts{\Gamma_{i,j}}\, u_i^k \|_i^2 \,d\tau
  \le
\frac{1}{4}
   \int_0^t \int_{\Gamma_{i,j}}
    \left(\nu_i \partial_{\nv_i}u_i^k
  -\av_i\cdot\nv_i \, u_i^k + \mathcal{S}_{i,j} u_i^k\right)^2\,d\sigma\,d\tau \\
    + \int_0^t \int_{\Gamma_{i,j}}
 (p_{i,j}+p_{j,i})(-p_{i,j}+p_{j,i}+\av_i\cdot\nv_i)(u_i^k)^2 \,d\sigma\,d\tau
+\frac{q}{2}(\|\av_i\|_{i,1,\infty} +\|c_i\|_{i,\infty})\|u_i^k(t)\|_i^2   \\
+{q \over 2}\int_0^t \int_{\Gamma_{i,j}}
 (-p_{i,j}+p_{j,i}+2\av_i\cdot\nv_i)
(\partial_t u_i^k +\divs{\Gamma_{i,j}}(\rv_{i,j} u_i^k)
-\divs{\Gamma_{i,j}}(s_{i,j} \, \grads{\Gamma_{i,j}}\, u_i^k))\,u_i^k\,d\sigma\,d\tau\\
+ C \left(\int_0^t \|\beta_{i}u_i^k\|_i^2 \,d\tau
    +q \int_0^t \|\sqrt{\nu_i}\nabla u_i^k\|_i^2 \,d\tau \right).
\end{multline}
In order to estimate the fourth term in the right-hand side of
\eqref{eq:esttransmcd2}, we observe that
\begin{equation*}
\int_0^t \int_{\Gamma_{i,j}} (-p_{i,j}+p_{j,i}+2\av_i\cdot\nv_i)u_i^k
\,\partial_t u_i^k\,d\sigma\,d\tau
= {1 \over 2} \int_{\Gamma_{i,j}} (-p_{i,j}+p_{j,i}+2\av_i\cdot\nv_i)u_i^k (t)^2 \,d\sigma.
\end{equation*}
By the trace theorem in the right-hand side, we write:
\begin{equation*}
\int_0^t \int_{\Gamma_{i,j}} (-p_{i,j}+p_{j,i}+2\av_i\cdot\nv_i)u_i^k
\,\partial_t u_i^k\,d\sigma\,d\tau
\le C \|u_i^k (t)\|_i \|\sqrt{\nu_i}\nabla u_i^k (t)\|_i,
\end{equation*}
and
\vspace{-3mm}
\begin{equation}\label{eq:estut}
\|u_i^k (t)\|_i^2 = 2 \int_0^t \int_{\Omega_i} (\partial_t u_i^k)u_i^k
\le 2 \left(\int_0^t \|\partial_t u_i^k\|_i^2\right)^{1 \over 2}
      \left(\int_0^t \|u_i^k\|_i^2\right)^{1 \over 2},
\end{equation}
we obtain
\begin{multline}\label{eq:esttrace}
{q \over 2}\
\int_0^t \int_{\Gamma_{i,j}} (-p_{i,j}+p_{j,i}+2\av_i\cdot\nv_i) u_i^k
\,\partial_t u_i^k\,d\sigma\,d\tau\\
\le\frac{q}{8} \int_0^t \| \partial_t u_i^k\|_i^2 \,d\tau
+\frac{q}{4}\interleave u_i^k(t)\interleave_i^2
+C \left(\int_0^t  \| \beta_{i}u_i^k\|_i^2  \,d\tau
\right).
\end{multline}
\vspace{-1mm}
Moreover, integrating by parts and using the trace theorem, we have
\vspace{-1mm}
\begin{multline}\label{eq:estimtt1}
\ds  -\frac{q}{2} \int_0^t \int_{\Gamma_{i,j}}
     \divs{\Gamma_{i,j}}(s_{i,j} \, \grads{\Gamma_{i,j}}\, u_i^k)
    (-p_{i,j}+p_{j,i}+2\av_i\cdot\nv_i) u_i^k \,d\sigma\,d\tau \\
     \hspace{-1.5cm}
\ds \le \frac{q}{16} \int_0^t
         \|\psi_{i,j}\sqrt{\nu_i \, s_{i,j}}\ \nabla \gradts{\Gamma_{i,j}}\, u_i^k \|_i^2 \,d\tau
+ C (\int_0^t \| \gradts{\Gamma_{i,j}}\, u_i^k \|_i^2\,d\tau
+ \int_0^t \| \beta_{i} u_i^k \|_i^2\,d\tau).
\end{multline}
Using \eqref{eq:estut}, we estimate the third term in the right-hand
side of \eqref{eq:esttransmcd2} by
\begin{equation}\label{eq:esttrace2}
\frac{q}{2}(\|\av_i\|_{i,1,\infty} +\|c_i\|_{i,\infty})\|u_i^k(t)\|_i^2
\le\frac{q}{8} \int_0^t \| \partial_t u_i^k\|_i^2 \,d\tau
+C \int_0^t  \| \beta_{i}u_i^k\|_i^2  \,d\tau.
\end{equation}
Replacing \eqref{eq:estimtt1}, \eqref{eq:esttrace} and
\eqref{eq:esttrace2} in \eqref{eq:esttransmcd2}, then using
the transmission conditions, we have:
\begin{multline*}
  {1\over 2}\left(
   \|\beta_{i}  u_i^k(t)\|^2_i
   + \frac{q}{2}\interleave u_i^k(t)\interleave_i^2
   +q \| \psi_{i,j}\sqrt{s_{i,j}} \, \gradts{\Gamma_{i,j}} \,u_i^k (t)\|_i^2
   \right)\\
  + \int_0^t\interleave \beta_{i} u_i^k(\tau,\cdot)\interleave_i^2 \,d\tau
  +\frac{q}{4} \int_0^t \|\partial_t u_i^k\|^2_{i} \,d\tau
 +\frac{q}{16}
    \int_0^t \| \psi_{i,j}\sqrt{\nu_i\, s_{i,j}}\ \nabla \,
    \gradts{\Gamma_{i,j}}\, u_i^k \|_i^2 \,d\tau\\
 +\frac{1}{4}
   \int_0^t \int_{\Gamma_{i,j}}
    \left(\nu_i \partial_{\nv_i}u_i^k
  -\av_i\cdot\nv_i \, u_i^k - \mathcal{S}_{j,i} u_i^k\right)^2\,d\sigma\,d\tau\\
 \le
 \frac{1}{4}
   \int_0^t \int_{\Gamma_{i,j}}
    \left(\nu_j \partial_{\nv_i}u_j^{k-1}
  -\av_j\cdot\nv_i \, u_j^{k-1} +\mathcal{S}_{i,j}
     u_j^{k-1}\right)^2\,d\sigma\,d\tau\\
 + C \left(\int_0^t \|\beta_{i} u_i^k\|_i^2 \,d\tau
    +\frac{q}{2}  \int_0^t \|\sqrt{\nu_i}\,\nabla u_i^k\|_i^2 \,d\tau \right).
\end{multline*}
We now sum up over the interfaces $j \in {\cal N}_i$, then over the
subdomains for $1 \le i \le I$, and on the iterations for $1 \le k \le
K$, the boundary terms cancel out, and we obtain for any $t \in
(0,T)$,
\begin{multline}
\sum_{k\in \inter{1,K}}\sum_{i\in \inter{1,I}}
  \left(\|\beta_{i,j}u_i^k(t)\|^2_i+q \|\sqrt{\nu_i}\,\nabla u_i^k(t)\|_i^2
+\nu_0 \int_0^t \|\nabla (\beta_{i,j}u_i^k) \|_i^2 \,d\tau \right)\\
\le \alpha(t)+C \sum_{k\in \inter{1,K}}\sum_{i\in \inter{1,I}}
\left(\int_0^t \|\beta_{i,j}u_i^k\|_i^2 \,d\tau
    +q \int_0^t \|\sqrt{\nu_i}\,\nabla u_i^k\|_i^2 \,d\tau \right),
\end{multline}
with
\begin{equation}
\alpha(t)=\frac{1}{4}\sum_{i\in \inter{1,I}} \sum_{j\in{\cal N}_i}
 \int_0^t \int_{\Gamma_{i,j}}
  \left(\nu_j \partial_{\nv_i} u_i^0
  -\av_j\cdot\nv_i \, u_j^0 +\mathcal{S}_{i,j}
     u_j^0\right)^2\,d\sigma\,d\tau.
\end{equation}
%
We conclude with Gr\"{o}nwall's lemma as before.
\end{proof}
%
\section{The discontinous Galerkin time stepping for the Schwarz waveform relaxation algorithm}
\label{section:dgoswr}
%
In the following sections, in order to simplify the analysis, we suppose that $c+{1 \over 2}\div \av \ge \alpha_0> 0$ \textit{
  a.e.} in $\Omega$.\\
%

\subsection{Time discretization of the local problem: discontinuous Galerkin method}

We suppose that the coefficients are
restricted to $p -\frac{\av\cdot\nv}{2}+{q \over 2}\nabla_{\Gamma}
\cdot\rv >0$ \ \textit{a.e.} on $\Gamma$, $q \ge 0 \ a.e.$ and $s>0$ \textit{a.e.}. This implies that the bilinear form $a$  defined in \eqref{eq:formbilin}
%
is positive definite on $H^1(\co)$ when $q=0$, and
positive definite on $H^1_1(\co)$ when $q\ge q_0 >0$ a.e.

We recall the time-discontinuous Galerkin method, as presented in
\cite{johnson:1985:dgfirst}.  We are given a decomposition ${\cal T}$
of the time interval $(0,T)$, $I_n=(t_n,t_{n+1}]$, for $0 \le n \le N$,
the mesh size is $k_n=t_{n+1}-t_n$. For ${\cal B}$ a Banach space and
$I$ an interval of $\R$, define for any integer $d \ge 0$
\begin{eqnarray}
  \begin{array}{rcl}
    {\mathbf{P}}_{d}({\cal B})&=&\{\varphi:I\rightarrow {\cal B}, \ \varphi(t)=
    \displaystyle\sum_{i=0}^{d } \varphi_i t^i,
    \ \varphi_i \in {\cal B}\} , \\
    {\cal P}_d({\cal B},{\cal T})&=&\{\varphi:I\rightarrow {\cal B}, \
    \varphi_{\mid I_{n}} \in
    {\mathbf{P}}_{d}({\cal B}), \ 1 \le n \le N\}.
  \end{array} \nonumber
\end{eqnarray}
Let ${\cal B}=H_1^1(\co)$ if $q > 0$, ${\cal B}=H^1(\co )$ if $q = 0$.
We define an approximation $U$ of $u$, polynomial of degree lower than
$d$ on every subinterval $I_n$. For every point $t_n$, we define
$U(t_n^{-})=\lim_{t \rightarrow t_n-0} U(t)$, and note
$U(t_n^{+})=\lim_{t \rightarrow t_n+0} \uud(t)$. The time discretization of \eqref{eq:formvariat} leads to searching $U\in
{\cal P}_d({\cal B},{\cal T})$ such that
\begin{eqnarray}\label{eq:dGvariat}
  \left\{ \begin{array}{l}
      U(0,\cdot)=u_0, \\
      \forall \, V \in {\cal P}_d({\cal B},{\cal T})\colon
      \displaystyle
      \int_{I_{n}} (\,m(\dot{U},V ) +a(U,V))\,dt\\
      \hspace{20mm}+m(U(t_n^+,\cdot)-U(t_n^-,\cdot),V(t_n^+,\cdot))
      = \displaystyle\int_{I_{n}}L(V ) \,dt,
      \quad
    \end{array}\right.
\end{eqnarray}
with $L(V )= (f,V)_{L^2(\co)}+(g,V)_{L^2(\Gamma)}$.  Since $I_n$ is
closed at $t_{n+1}$, $U(t_{n+1}^-)$ is the value of $U$ at $t_{n+1}$.
Due to the discontinuous nature of the test and trial spaces, the
method is an implicit time stepping scheme, and $U\in
{\mathbf{P}}_d({\cal B},{\cal T})$ is obtained recursively on each
subinterval, which makes it very flexible.
\begin{theorem}\label{th:existdisc}
  If $p -\frac{\av\cdot\nv}{2}+{q \over 2}\nabla_{\Gamma} \cdot\rv >0$
  \ \textit{a.e.} on $\Gamma$, $q \ge 0 \ a.e.$ and $s>0 \ a.e.$,
  equation \eqref{eq:dGvariat} defines a unique solution.
\end{theorem}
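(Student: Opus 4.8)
The plan is to solve \eqref{eq:dGvariat} slab by slab and, on each slab, to recognize it as a coercive variational problem to which the Lax--Milgram lemma applies. Since the test and trial spaces are discontinuous across the nodes $t_n$, the equation decouples: once $U(t_n^-,\cdot)$ is known (from the initial datum $u_0$ when $n=0$, and from the previous slab otherwise), the restriction of $U$ to $I_n$ is determined by requiring, for every $V\in\mathbf{P}_d({\cal B})$ on $I_n$,
\[
B_n(U,V):=\int_{I_n}\bigl(m(\dot U,V)+a(U,V)\bigr)\,dt+m(U(t_n^+,\cdot),V(t_n^+,\cdot))=F_n(V),
\]
where $F_n(V):=\int_{I_n}L(V)\,dt+m(U(t_n^-,\cdot),V(t_n^+,\cdot))$ collects the known data. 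Here $\mathbf{P}_d({\cal B})$ denotes the polynomials of degree at most $d$ on the single interval $I_n$ with values in ${\cal B}$, which is a Hilbert space once equipped with the $L^2(I_n;{\cal B})$ inner product; note that $\mathbf{P}_d({\cal B})$ is finite-dimensional in the time variable only.

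First I would establish coercivity of $B_n$. Taking $V=U$ and using the telescoping identity
\[
\int_{I_n}m(\dot U,U)\,dt=\tfrac12\,m(U(t_{n+1}^-,\cdot),U(t_{n+1}^-,\cdot))-\tfrac12\,m(U(t_n^+,\cdot),U(t_n^+,\cdot)),
\]
the jump contribution recombines to give
\[
B_n(U,U)=\tfrac12\,m(U(t_{n+1}^-,\cdot),U(t_{n+1}^-,\cdot))+\tfrac12\,m(U(t_n^+,\cdot),U(t_n^+,\cdot))+\int_{I_n}a(U,U)\,dt.
\]
Since $m(w,w)=\|w\|_{L^2(\co)}^2+(q w,w)_{L^2(\Gamma)}\ge 0$, the two endpoint terms are nonnegative and may be discarded. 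Under the standing assumptions $p-\frac{\av\cdot\nv}{2}+\frac{q}{2}\divs{\Gamma}\rv>0$, $q\ge0$, $s>0$, together with $c+\frac12\div\av\ge\alpha_0>0$ and $\nu\ge\nu_0>0$, the form $a$ is coercive (this is exactly the positive-definiteness recorded before the statement), the antisymmetric advection part cancelling when both arguments coincide: $a(v,v)\ge\alpha\|v\|_{\cal B}^2$ for some $\alpha>0$. Hence $B_n(U,U)\ge\alpha\|U\|_{L^2(I_n;{\cal B})}^2$, which is coercivity on $\mathbf{P}_d({\cal B})$.

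It remains to check that $B_n$ is continuous on $\mathbf{P}_d({\cal B})$. The form $a$ is bounded on ${\cal B}$ by the $L^\infty$ bounds on the coefficients and the trace theorem \eqref{eq:tracetheorem}, while $m$ is controlled by $\|\cdot\|_{\cal B}$. The only terms needing care, $\int_{I_n}m(\dot U,V)\,dt$ and the nodal evaluations at $t_n^+$, are bounded because on the finite-dimensional space of time polynomials both differentiation in time and point evaluation are bounded with respect to the $L^2(I_n)$ norm (inverse inequalities, with constants depending on $k_n$). Thus $B_n$ is a continuous, coercive bilinear form and $F_n$ a continuous linear functional on the Hilbert space $\mathbf{P}_d({\cal B})$, so Lax--Milgram yields a unique $U$ on $I_n$. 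Starting from $U(0,\cdot)=u_0$ and iterating this argument for $n=0,1,\dots,N$ produces the unique $U\in{\cal P}_d({\cal B},{\cal T})$ solving \eqref{eq:dGvariat}.

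The only genuine obstacle is the coercivity step, and it is handled entirely by the telescoping energy identity, which converts the apparently indefinite term $\int_{I_n}m(\dot U,U)\,dt$ into nonnegative endpoint contributions and leaves the coercive form $a$ in control; everything else is the routine verification that differentiation and nodal evaluation are bounded on the finite-dimensional space of time polynomials.
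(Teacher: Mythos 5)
Your proof is correct, but it follows a genuinely different route from the paper's. The paper expands $U$ on each slab in the Legendre basis $L_{n,k}$, computes the coefficients $\int_{I_n}\dot L_{n,k}L_{n,j}+L_{n,k}(t_n^+)L_{n,j}(t_n^+)$ explicitly, views the result as a square system of the type ``coercive $+$ compact,'' invokes the Fredholm alternative to reduce existence to uniqueness, and then proves uniqueness by testing with $\Phi_j=U_j$ and observing that the $m$-terms recombine into nonnegative squares, so that positive definiteness of $a$ forces $U=0$. You instead prove coercivity of the full space-time form $B_n$ directly, via the telescoping identity $\int_{I_n}m(\dot U,U)\,dt+m(U(t_n^+),U(t_n^+))=\tfrac12 m(U(t_{n+1}^-),U(t_{n+1}^-))+\tfrac12 m(U(t_n^+),U(t_n^+))$, and then apply Lax--Milgram on the Hilbert space $\mathbf{P}_d({\cal B})\cong{\cal B}^{d+1}$; this is cleaner, avoids the Fredholm step entirely, and is the standard DG-in-time energy argument. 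What the paper's Legendre computation buys in exchange is the explicit structure of the linear system on each slab, which it reuses later for the implementation in Section 4. One point to be careful about: your step $a(v,v)\ge\alpha\|v\|_{\cal B}^2$ needs genuine coercivity in the ${\cal B}$-norm, not merely the positive definiteness the paper records. For $q=0$ this follows from $\nu\ge\nu_0>0$ and $c+\tfrac12\div\av\ge\alpha_0>0$ alone; for $q\ge q_0>0$, controlling the $(q\nabla_\Gamma v,\nabla_\Gamma v)_{L^2(\Gamma)}$ part of the $H^1_1$-norm requires $s$ bounded away from zero, which the stated hypothesis $s>0$ a.e.\ does not literally give (the paper's own ``coercive $+$ compact'' assertion has the same implicit requirement, and its uniqueness step only needs positive definiteness). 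You should either add $s\ge s_0>0$ to your standing assumptions or note that on this point your argument asks slightly more of the coefficients than the paper's.
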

\begin{proof}
The result relies on the fact that the bilinear form $a$ is definite
positive.  It is is most easily seen by using a basis of Legendre
polynomials.  $U\in {\cal P}_d(H_1^1(\Omega),{\cal T})$ is obtained
recursively on each subinterval. We introduce the Legendre polynomials
$L_n$, orthogonal basis in $L^2(-1,1)$, with $L_n(1)=1$. $L_n$ has the
parity of $n$, hence $L_n(-1)=(-1)^n$. A basis of orthogonal
polynomial on $I_n$ is given by $L_{n,k}(t)=L_k(\frac{2}{k_n}
(t-\frac{t_{n+1}+t_n}{2}))$.  Choose $V(t,x)=L_{n,j}(t)\Phi_j(x)$ in
\eqref{eq:dGvariat} with $\Phi_j\in H_1^1(\Omega)$, and expand $U$ on
$I_n$ as $U(t,x)=\sum_{k=0}^d U_k(x)L_{n,k}(t)$. Suppose $U$ to be
given on $(0,t_n]$. In order to determine $U$ on $I_n$, we must solve
  the system: for any $\Phi_j\in H_1^1(\Omega)$,
\[
  \begin{split}
    \sum_{k=0}^d\ \int_{I_{n}} \biggl(\dot{L}_{n,k}L_{n,j}m(U_k,\Phi_j) +
    L_{n,k}L_{n,j}a(U_k,\Phi_j) \biggr) \,dt\\
    + \sum_{k=0}^d L_{n,k}(t_n^+)L_{n,j}(t_n^+)m(U_k,\Phi_j)=
    \displaystyle\int_{I_{n}} L_{n,j} L(\Phi_j) \,dt.
  \end{split}
\]
It is an implicit scheme. We calculate the coefficients
\begin{align}
  &\int_{I_{n}} L_{n,k}L_{n,j}=\delta_{kj}\|L_{n,j}\|^2,\nonumber\\
  &\int_{I_{n}}\dot{L}_{n,k}L_{n,j}=
  \begin{cases}
    0 & \text{ if $k \le j$}\\
    1-(-1)^{k+j} & \text{ if $k > j$},
  \end{cases} \nonumber\\
  & \int_{I_{n}}\dot{L}_{n,k}L_{n,j}+ L_{n,k}(t_n^+)L_{n,j}(t_n^+)=
  \begin{cases}
    (-1)^{k+j} & \text{ if $k \le j$}\\
    1 & \text{ if $k > j$},
  \end{cases}
  \nonumber
\end{align}
which leads to
{\small
\begin{eqnarray}
 \|L_{n,j}\|^2 a(U_j, \Phi_j)+ m(U_j,\Phi_j)+ \sum_{k<j}(-1)^{k+j}
  m(U_k, \Phi_j)+
  \sum_{k>j}m(U_k, \Phi_j)= \nonumber
  \displaystyle\int_{I_{n}} L_{n,j} L(\Phi_j) \,dt . \nonumber
\end{eqnarray}
}
%
It is a square system of partial differential equations, of the type coercive $+$ compact.  By the Fredholm alternative, we only need to
prove uniqueness. Choose now $\Phi_j=U_j$, and obtain
\[
  \sum_j\|L_{n,j}\|^2a(U_j,U_j)+\sum_jm(U_j,U_j)
  +2\sum_j \sum_{\substack{k>j \\ k+j \ \mbox{even}}}m(U_k, U_j)= 0,
\]
and since $a$ is positive definite, we deduce that $U=0$.
\end{proof}

We will make use of the following remark
(\cite{makridakis:2007:pea}). We introduce the Gauss-Radau points,
$(0<\tau_1,\dotsc,\tau_{d+1}=1)$, defined such that the quadrature
formula
\[
  \int_0^1 f(t) dt\approx \sum_{j=1}^{d+1} w_q \ f(\tau_q)
\]
is exact in $\PP_{2d}$, and the interpolation operator ${\cal I}_n$ on
$[t_n,t_{n+1}]$ at points
$(t_n,t_n+\tau_1k_n,\dotsc,t_n+\tau_{d+1}k_n)$.  For any $\chi \in
\PP_{d}$, $\hat{\chi}={\cal I}_n\chi \in\PP_{d+1}$.

Let ${\cal I}: {\cal P}_d({\cal B},{\cal T}) \rightarrow {\cal
  P}_{d+1}({\cal B},{\cal T})$ be the operator whose restriction to
each subinterval is ${\cal I}_n$ and satisfies ${\cal
  I}U(t_n^+)=U(t_n^-)$.  By using the Gauss-Radau formula, which is
exact in $\PP_{2d}$, we have for all $\psi_{i,j} \in \PP_{d}$
\[
  \int_{I_n>}\dtq{{\cal I\chi}}\psi_{i,j} \, dt- \int_{I_n}\dtq{\chi}\psi_{i,j} \, dt=
  (\chi(t_{n}^+)-\chi(t_{n}^-)) \psi_{i,j}(t_{n}^+).
\]
As a consequence, we have a very useful inequality:
\begin{equation}\label{eq:interpradau2}
  \int_{I_n}\frac{d}{dt}({\cal I}\psi_{i,j})\psi_{i,j} dt
  \ge {1 \over 2}[\psi_{i,j}(t^-_{n+1})^2-\psi_{i,j}(t^-_{n})^2].
\end{equation}
Also, equation \eqref{eq:dGvariat} can be rewritten as
\begin{equation}\label{eq:dGvariatinterp}
  \int_{I_{n}} (\,m(\,\dtq{{\cal I} U },V ) +a(U,V))\,dt=
  \displaystyle\int_{I_{n}}L(V ) \,dt,
\end{equation}
or in  the strong formulation:
\begin{equation}\label{eq:dGstronginterp}
  \begin{array}{l}
    \partial_t ({\cal I} U ) +\div(\av U-\nu\nabla U)+cU= P f, \mbox{ in }
    \Omega\times (0,T) ,\\[2mm]
    \ds\bigl(\nu\,\partial_{\nv}- \av\cdot\nv \bigr)U +
    p\,U + q(\partial_t ({\cal I} U)   + \divs{\Gamma}(\rv U -s\grads{\Gamma}U))
    =Pg \mbox{ on } \Gamma\times (0,T). \\
  \end{array}
\end{equation}
Here $P$ is the projection $L^2$ in each subinterval of ${\cal T}$ on $\PP_d$.
\begin{theorem}[Thomee, \cite{thomee:1997:FEM}]
Let $\uud$ be the solution of \eqref{eq:dGvariat} and $\uu$ the solution of \eqref{eq:pblim}. Under the assumptions of Theorem \ref{th:exfortefo},  the estimate holds
\begin{align}
  \|\uu-\uud\|_{L^\infty(I_n,L^2(\Omega))}\le C k^{d+1}
  \|\partial_t^{d+1}\uu\|_{L^2(0,T;H_2^2(\Omega))},
\end{align}
with $k=\max_{0 \le n \le N} k_n$.
\end{theorem}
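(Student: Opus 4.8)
The plan is to reduce this to the classical a priori error analysis for discontinuous Galerkin time-stepping applied to an abstract parabolic evolution equation, as found in Thomée's monograph, and simply verify that our variational problem \eqref{eq:dGvariat} fits that abstract framework. The key structural facts we need are already in place: by the assumption $c+\frac{1}{2}\div\av \ge \alpha_0 > 0$ together with the coefficient restrictions listed before Theorem \ref{th:existdisc}, the bilinear form $a$ is coercive (positive definite) on ${\cal B}$, and the form $m$ is a symmetric positive-definite bilinear form playing the role of the mass inner product. Thus \eqref{eq:formvariat} is an abstract Galerkin evolution $\frac{d\,}{dt}m(w,v)+a(w,v)=L(v)$ to which the standard dG theory applies verbatim once we identify $m$ as the inner product and $a$ as the elliptic form.

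First I would set up the standard dG projection $\widetilde{U}\in {\cal P}_d({\cal B},{\cal T})$ of the exact solution $\uu$, defined so that the dG orthogonality relations hold on each $I_n$ against all $V\in{\cal P}_d({\cal B},{\cal T})$; this is the natural projection associated to the form $m(\dot{\cdot},\cdot)+a(\cdot,\cdot)$ with jump terms. Then I would split the error as $\uu-\uud=(\uu-\widetilde{U})+(\widetilde{U}-\uud)=\rho+\theta$. The approximation term $\rho$ is controlled by the interpolation and projection estimates for the dG projection, which yield the $O(k^{d+1})$ rate against the $(d+1)$-st time derivative of $\uu$, measured in the norm induced by $m$ — and here the norm $m(\cdot,\cdot)=\|\cdot\|_{L^2(\co)}^2+\|\sqrt{q}\,\cdot\|_{L^2(\Gamma)}^2$ is dominated by the $H_2^2(\co)$ norm, which accounts for the appearance of $\|\partial_t^{d+1}\uu\|_{L^2(0,T;H_2^2(\Omega))}$ on the right-hand side. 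The discrete component $\theta$ satisfies a discrete dG scheme with the consistency residual from $\rho$ as data; using the coercivity of $a$ and the positivity of $m$ in the stability estimate \eqref{eq:interpradau2} (the Gauss--Radau inequality that furnishes the crucial $\frac{1}{2}[\psi(t_{n+1}^-)^2-\psi(t_n^-)^2]$ sign), one obtains an $L^\infty(I_n;L^2)$ bound on $\theta$ by the same $O(k^{d+1})$ quantity.

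The main obstacle — and the reason this is stated as a citation to Thomée rather than reproved — is that our setting is not quite the textbook one: the inner product $m$ carries a boundary term $\int_\Gamma q\,wv\,d\sigma$ and the form $a$ contains tangential-derivative and Ventcell contributions, so the abstract spaces are the nonstandard Sobolev--trace spaces $H^s_s(\co)$ of \eqref{eq:sobtrace} rather than plain $L^2$ and $H^1$. The technical work therefore lies in checking that $m$ and $a$ remain a compatible coercive pair on the scale $(H^2_2(\co),H^1_1(\co),L^2\text{-}m)$, so that the abstract dG energy argument and the projection estimates transfer without modification. Once that identification is made, one invokes the regularity of $\uu$ guaranteed by Theorem \ref{th:exfortefo} — which places $\uu$ in $L^\infty(0,T;H^2_2(\co))$ with the requisite time regularity — to make the right-hand side norm $\|\partial_t^{d+1}\uu\|_{L^2(0,T;H_2^2(\Omega))}$ meaningful, and the estimate follows directly from the cited result.
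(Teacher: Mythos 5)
The paper gives no proof of this statement---it is quoted directly from Thom\'ee's monograph \cite{thomee:1997:FEM}---and your plan (recasting \eqref{eq:dGvariat} as an abstract evolution for the pair $(m,a)$, splitting the error into a dG-projection part and a discrete part, and combining projection estimates with the Gauss--Radau stability inequality \eqref{eq:interpradau2} under the coercivity of $a$) is precisely the standard route that citation points to, so your approach is consistent with the paper's. One small correction: the $H^2_2$ norm on the right-hand side does not appear because $m$ is dominated by it, but because the consistency residual driving the discrete error component involves the elliptic operator applied to the time-projection error (compare the terms $\|\rho_i\|_{H^2(\Omega_i)}$ and $\|(1-P^i)(u|_{\Omega_i})\|_{H^2(\Omega_i)}$ in the proof of Theorem \ref{th:errorestimate}), so one needs $\partial_t^{d+1}u$ controlled in the $H^2_2$ norm rather than merely in the norm induced by $m$.
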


\subsection{The discrete in time optimized Schwarz waveform relaxation
algorithm with different subdomains grids}

In this part we present and analyse the discrete non conforming in
time optimized Schwarz waveform relaxation method.

The time partition in subdomain $\Omega_i$, is ${\cal T}_i$, with $N_i
+1$ intervals $I_n^i$, and mesh size $k_n^i$. In view of formulation
\eqref{eq:dGstronginterp}, we define interpolation operators ${\cal
  I}^i$ and projection operators $P^i$ in each subdomain,
i.e. $P^i$ is the projection $L^2$ in each subinterval of ${\cal T}_i$ on $\PP_d$,
and we solve
\begin{subequations}\label{eq:algosdorder2}
  \begin{align}
    & \partial_t ({\cal I}^iU_i^k)
     +\div(\av_i U_i^k -\nu_i\nabla U_i^k)
    +c_i\,U_i^k=P^if\mbox{ in }\Omega_i\times(0,T),\label{eq:algosdorder2-1}\\
    &\ds\bigl(\nu_i \partial_{\nv_i}
    -\av_i\cdot\nv_i \bigr)\,U_i^{k} +S_{i,j}U_i^{k} =
    P^i\bigl((\nu_{j} \partial_{\nv_i}
    -\av_{j}\cdot\nv_{i})\, U_{j}^{k-1}
    +\widetilde{S}_{i,j}U_{j}^{k-1}\bigr)
    \mbox{ on }\Gamma_{i,j}, j\in {\cal N}_i.\label{eq:algosdorder2-2}
  \end{align}
\end{subequations}
Here the operators are different on either part of the "equal" sign:
\begin{equation}
  \begin{array}{l}
    S_{i,j}U= p_{i,j}\,U + q_{i,j}\, (\partial_t ({\cal I}^i U)
    + \divs{\Gamma_{i,j}}(\rv_{i,j}U-s_{i,j}\grads{\Gamma_{i,j}}U)) \\
    \widetilde{S}_{i,j}U= p_{i,j}\,U + q_{i,j}\, (\partial_t ({\cal I}^j U)
    + \divs{\Gamma_{i,j}}(\rv_{i,j}U-s_{i,j}\grads{\Gamma_{i,j}}U)).
  \end{array}
\end{equation}
Formally, the sequence of problems \eqref{eq:algosdorder2} converges to the solution of
\begin{subequations}\label{eq:dgdiscreteall}
  \begin{align}
    & \partial_t ({\cal I}^iU_i)
     +\div(\av_i U_i -\nu_i\nabla U_i)
    +c_i\,U_i=P^if\mbox{ in }\Omega_i\times(0,T),\label{eq:dgdiscreteall-1}\\
    &\ds\bigl(\nu_i \partial_{\nv_i}
    -\av_i\cdot\nv_i \bigr)\,U_i +S_{i,j}U_i =
    P^i\bigl((\nu_{j} \partial_{\nv_i}
    -\av_{j}\cdot\nv_{i})\, U_{j}
    +\widetilde{S}_{i,j}U_{j}\bigr)
    \mbox{ on }\Gamma_{i,j}, j\in {\cal N}_i.\label{eq:dgdiscreteall-2}
  \end{align}
\end{subequations}
We present the analysis first with Robin transmission conditions
(e.g. $q_{i,j}=0$) and general decomposition, and then with order 2
transmission conditions and decomposition in strips.

\subsubsection{The Robin case}
We consider here a general decomposition of the domain, possibly
with corners.  We solve \eqref{eq:algosdorder2} with $q_{i,j}=0$, i.e.
$S_{i,j}U= \widetilde{S}_{i,j}U = p_{i,j}\,U $.

\begin{theorem}
  Assume $q_{i,j}=0$, $p_{j,i}-p_{i,j}-\av_i \cdot \nv_i=0,
  \ p_{i,j}-\frac{\av_i \cdot \nv_i}{2} > 0$. Problem \eqref{eq:dgdiscreteall}
  has a unique solution $(U_i)_{i \in I}$ , and $U_i$ is the limit of
  the iterates of algorithm \eqref{eq:algosdorder2}.
\end{theorem}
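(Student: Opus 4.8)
The plan is to transcribe the continuous Robin argument of Theorem~\ref{th:convcont} to the discrete-in-time setting, replacing the identity for the continuous time derivative by the Gauss--Radau inequality \eqref{eq:interpradau2} and treating the time projection operators $P^i$ through their $L^2$-nonexpansiveness. First I would settle well-posedness of the iterates: with $q_{i,j}=0$ the hypothesis $p_{i,j}-\frac{\av_i\cdot\nv_i}{2}>0$ is exactly the positivity of the form $a$ of \eqref{eq:formbilin}, so Theorem~\ref{th:existdisc} applies on each subinterval and each local problem in \eqref{eq:algosdorder2} has a unique solution; the sequence of iterates is therefore well defined. By linearity, convergence reduces to showing that the iterates of the \emph{homogeneous} algorithm ($f=0$, $u_0=0$, vanishing initial guesses) tend to zero, and uniqueness for \eqref{eq:dgdiscreteall} is the same homogeneous statement.

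Second, I would derive a discrete energy estimate subdomain by subdomain. Using the strong form \eqref{eq:dGstronginterp} of \eqref{eq:algosdorder2-1} and testing with $U_i^k$ on each slab $I_n^i$ of ${\cal T}_i$, the time term $\int_{I_n^i}(\partial_t({\cal I}^iU_i^k),U_i^k)_{L^2(\Omega_i)}$ is bounded below by $\frac12\bigl[\|U_i^k(t_{n+1}^-)\|_{L^2(\Omega_i)}^2-\|U_i^k(t_n^-)\|_{L^2(\Omega_i)}^2\bigr]$ through \eqref{eq:interpradau2}; summing over $n$ telescopes, and since the initial value vanishes it leaves $\frac12\|U_i^k(T)\|_{L^2(\Omega_i)}^2$. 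Integration by parts in space produces the elliptic contribution, controlled from below by the standing assumption $c+\frac12\div\av\ge\alpha_0>0$, together with the interface flux $-\int_{\Gamma_{i,j}}(\nu_i\partial_{\nv_i}U_i^k-\frac{\av_i\cdot\nv_i}{2}U_i^k)\,U_i^k$. Rewriting the latter through the algebraic identity \eqref{tototiti}, the hypothesis $p_{j,i}-p_{i,j}-\av_i\cdot\nv_i=0$ annihilates the residual boundary term in $(U_i^k)^2$, so $C_1=0$ exactly as in the Remark following Theorem~\ref{th:convcont}, and no Gr\"onwall argument is needed.

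Third comes the genuinely discrete point. The transmission condition \eqref{eq:algosdorder2-2} reads, on $\Gamma_{i,j}$, $(\nu_i\partial_{\nv_i}-\av_i\cdot\nv_i)U_i^k+p_{i,j}U_i^k=P^i\bigl((\nu_j\partial_{\nv_i}-\av_j\cdot\nv_i)U_j^{k-1}+p_{i,j}U_j^{k-1}\bigr)$. Since $P^i$ is the $L^2(0,T)$-orthogonal projection onto $\PP_d$ on each slab of ${\cal T}_i$, it is nonexpansive, so the squared incoming flux at iteration $k$ is bounded by the squared outgoing flux of $U_j^{k-1}$ before projection. This is precisely what lets the interface terms telescope across iterations in spite of the nonconforming grids. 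Summing the per-subdomain estimates over the slabs, over $j\in{\cal N}_i$, over $i\in\inter{1,I}$ and over $k\in\inter{1,K}$, the interface contributions cancel pairwise except the first and last, yielding $\sum_{k,i}\bigl(\|U_i^k(T)\|_{L^2(\Omega_i)}^2+\nu_0\int_0^T\|\nabla U_i^k\|_{L^2(\Omega_i)}^2\,d\tau\bigr)\le\alpha(T)$, with $\alpha(T)$ built from the initial guesses alone and independent of $K$. Hence the series converges, its general term tends to zero, and $U_i^k\to0$.

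Finally, existence and uniqueness for \eqref{eq:dgdiscreteall} follow. Uniqueness is the homogeneous case just treated. For existence I would invoke the global coupled variational formulation on $\prod_i\PP_d({\cal B}_i,{\cal T}_i)$: the local DG forms are coercive-plus-compact as in the proof of Theorem~\ref{th:existdisc}, the interface couplings are bounded, and uniqueness therefore gives existence by the Fredholm alternative. With $U_i$ available, the errors $e_i^k=U_i^k-U_i$ solve the homogeneous algorithm \eqref{eq:algosdorder2}, so the summed estimate of the third step bounds $\sum_k\|e_i^k(T)\|_{L^2(\Omega_i)}^2$ and forces $\|e_i^k\|\to0$, i.e.\ $U_i^k\to U_i$; note that for $q_{i,j}=0$ one has $S_{i,j}=\widetilde S_{i,j}=p_{i,j}\,\cdot$, so no interpolation operator survives at the interface. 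I expect the main obstacle to be exactly the bookkeeping of the projections $P^i$ on mismatched grids: one must verify that their nonexpansiveness alone makes the interface sum telescope and that no cross-grid discrepancy spoils the cancellation, the remainder being a faithful transcription of the continuous argument.
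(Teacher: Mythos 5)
Your proposal is correct and follows essentially the same route as the paper: energy estimate with test function $U_i^k$, the Gauss--Radau inequality \eqref{eq:interpradau2} for the time term, the identity \eqref{tototiti} at the interface with the hypothesis $p_{j,i}-p_{i,j}-\av_i\cdot\nv_i=0$ cancelling the residual boundary term (since the global-in-time projection forbids a Gr\"{o}nwall argument), nonexpansiveness of $P^i$, telescoping of the boundary terms over iterations, and well-posedness of the square system \eqref{eq:dgdiscreteall} by reduction to uniqueness. The only slip is the phrase ``vanishing initial guesses'' in your reduction step --- the error iteration has arbitrary (nonzero) initial interface data, which is exactly why the bound $\alpha(T)$ built from them is needed --- but your final paragraph handles this correctly.
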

\begin{proof}
We first write energy estimates on \eqref{eq:algosdorder2} for $f\equiv 0$ and $u_0\equiv 0$. We start like in the proof of Theorem \ref{th:convcont}. We multiply
\eqref{eq:algosdorder2-1} by $U_i^k$, integrate on $\Omega_i$, then
integrate on the interval $(t_n^i, t_{n+1}^i)$ and use \eqref{eq:interpradau2}
and \eqref{tototiti}:
\begin{multline*}
  \|U_i^k(t^i_{n+1})\|^2_{L^2(\Omega_i)}-\|U_i^k(t^i_{n})\|^2_{L^2(\Omega_i)}\\
  +2\int_{I_n^i}\bigl((\nu_i\nabla U_i^k,\nabla U_i^k)_{L^2(\Omega_i)}
                    + ((c_i+{1 \over 2}\div \av_i) U_i^k,U_i^k)_{L^2(\Omega_i)}\bigr) \,d\tau\\
  +
   \sum_{j\in{\cal N}_i}\int_{I_n^i}\int_{\Gamma_{i,j}}
  \frac{1}{p_{i,j}+p_{j,i}}
  \bigl(\nu_i \partial_{\nv_i} u_i^k
  -{\av_i\cdot\nv_i} U_i^k-p_{j,i}U_i^k\bigr)^2 d\sigma\,d\tau \\
   \leq
  \sum_{j\in{\cal N}_i}\int_{I_n^i}\int_{\Gamma_{i,j}}
  \frac{1}{p_{i,j}+p_{j,i}}
  \bigl(\nu_i \partial_{\nv_i} u_i^k-{\av_i\cdot\nv_i}
  U_i^k +p_{i,j}U_i^k\bigr)^2d\sigma\,d\tau\\
  +\sum_{j\in{\cal N}_i}\int_{I_n^i}\int_{\Gamma_{i,j}}(p_{j,i}-p_{i,j}-\av_i \cdot \nv_i)
  (U_i^k)^2d\sigma \,d\tau.
\end{multline*}
We can not use Gr\"{o}nwall's Lemma like in the continuous case, due to the
presence of the global in time projection operator ${\cal P}^j$ in the transmission
conditions. Therefore we have to assume that $p_{j,i}-p_{i,j}-\av_i \cdot \nv_i=0$
everywhere, which cancels the last term.
%
%
We sum up over the time intervals, using the fact that the errors vanish at time 0:
\begin{multline*}
  \|U_i^k(T)\|^2_{L^2(\Omega_i)}
  +2\min(\nu_0,\alpha_0)\int_{0}^T\|U_i^k\|^2_{H^1(\Omega_i)}\,d\tau\\
  +\sum_{j\in{\cal N}_i}\int_{0}^T
   \int_{\Gamma_{i,j}}
  \frac{1}{p_{i,j}+p_{j,i}}
  \bigl(\nu_i \partial_{\nv_i} u_i^k
  -{\av_i\cdot\nv_i} U_i^k-p_{j,i}U_i^k\bigr)^2 d\sigma\,d\tau \\
  \le
  \sum_{j\in{\cal N}_i}\int_{0}^T\int_{\Gamma_{i,j}}
  \frac{1}{p_{i,j}+p_{j,i}}
  \bigl(\nu_i \partial_{\nv_i} u_i^k
  -{\av_i\cdot\nv_i} U_i^k+p_{i,j}U_i^k\bigr)^2d\sigma\,d\tau.
\end{multline*}
We now insert the transmission conditions
\begin{multline*}
  \|U_i^k(T)\|^2_{L^2(\Omega_i)}+2\min(\nu_0,\alpha_0)
  \int_{0}^T\|U_i^k\|^2_{H^1(\Omega_i)}\,d\tau\\
  +
   \sum_{j\in{\cal N}_i}\int_{0}^T\int_{\Gamma_{i,j}}
   \frac{1}{p_{i,j}+p_{j,i}}
   \bigl(\nu_i \partial_{\nv_i} u_i^k
  -{\av_i\cdot\nv_i} U_i^k-p_{j,i}U_i^k\bigr)^2 d\sigma\,d\tau \\
   \le
  \sum_{j\in{\cal N}_i}\int_{0}^T\int_{\Gamma_{i,j}}
  \frac{1}{p_{i,j}+p_{j,i}}
  \biggl(P^i \bigl(\nu_j
  \partial_{\nv_i}U_j^{k-1}-{\av_j\cdot\nv_i} U_j^{k-1}
  \, +p_{i,j}U_j^{k-1} \bigr)\biggr)^2d\sigma\,d\tau.
\end{multline*}
We use the fact that the projection operator is a contraction to obtain:
\begin{multline*}
  \|U_i^k(T)\|^2_{L^2(\Omega_i)}+2\min(\nu_0,\alpha_0)
   \int_{0}^T\|U_i^k\|^2_{H^1(\Omega_i)}
  \,d\tau\\
  + \sum_{j\in{\cal N}_i}\int_{0}^T\int_{\Gamma_{i,j}}
  \frac{1}{p_{i,j}+p_{j,i}}
  \bigl(\nu_i \partial_{\nv_i} u_i^k
  -{\av_i\cdot\nv_i} U_i^k-p_{j,i}U_i^k\bigr)^2 d\sigma\,d\tau \\
   \le
  \sum_{j\in{\cal N}_i}\int_{0}^T\int_{\Gamma_{j,i}}
  \frac{1}{p_{i,j}+p_{j,i}}
  \bigl(\nu_j \partial_{\nv_j}U_j^{k-1}
  -{\av_j \cdot\nv_j} U_j^{k-1}\, -p_{i,j}U_j^{k-1}\bigr)^2d
  \sigma\,d\tau.
\end{multline*}
We sum up over the subdomains, we define the boundary term
\[
  B^k=
  \sum_{i \in \inter{1,I}}\sum_{j\in{\cal N}_i}\int_{0}^T
  \int_{\Gamma_{i,j}} \frac{1}{p_{i,j}+p_{j,i}}
  \bigl(\nu_i \partial_{\nv_i} u_i^k
  -{\av_i\cdot\nv_i} U_i^k-p_{j,i}U_i^k\bigr)^2 d\sigma\,d\tau,
\]
we obtain
\begin{equation}\label{eq:est5b}
  \sum_{i \in \inter{1,I}}(\|U_i^k(T)\|^2_{L^2(\Omega_i)}
  +2\min(\nu_0,\alpha_0)\int_{0}^T\|U_i^k\|^2_{H^1(\Omega_i)}\,d\tau) +B^k \le   B^{k-1}.
\end{equation}
We first apply this inequality to prove the first part of the Theorem. \eqref{eq:dgdiscreteall} is a square discrete system, and proving well-posedness is equivalent to proving uniqueness. Dropping the superscript in \eqref{eq:est5b} gives the result.
As for the convergence, we proceed as in the continuous case by summing \eqref{eq:est5b} over the iterates to obtain that $\sum_{i \in \inter{1,I}}\|U_i^k(T)\|^2_{L^2(\Omega_i)}$ and
$\sum_{i \in \inter{1,I}}\int_{0}^T\|U_i^k\|^2_{H^1(\Omega_i)}\,d\tau$ tend to zero as $k$
tend to infinity.
\end{proof}

\subsubsection{The Order 2 case}
We restrict ourselves to a splitting of the domain into strips with parallel planar interfaces.
\begin{theorem}\label{th:convdiscRobin}
  We assume that $p_{i,j}=p>0$, $q_{i,j}=q>0$, $s_{i,j}=s>0$,
  $\av_i=0$ and $\rv_{i,j}=0$. Problem \eqref{eq:dgdiscreteall} has a
  unique solution $(U_i)_{i\in I}$, and $U_i$ is the limit of the
  iterates of algorithm \eqref{eq:algosdorder2}.
\end{theorem}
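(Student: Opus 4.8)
By linearity it suffices to treat the error equations, so throughout I take $f\equiv 0$ and $u_0\equiv 0$ and prove that the iterates of \eqref{eq:algosdorder2} tend to zero; the same energy inequality, read with the iteration index suppressed, shows that the square system \eqref{eq:dgdiscreteall} admits only the trivial solution, hence is well posed. The strategy is to transport the Ventcell energy machinery of Theorem \ref{th:convcontorder2} to the discrete-in-time setting, exactly as the discrete Robin theorem transported the energy argument of Theorem \ref{th:convcont}, and to exploit the drastic simplifications granted by the hypotheses $\av_i=0$, $\rv_{i,j}=0$ and $p_{i,j}=p_{j,i}=p$.

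On each time slab $I_n^i$ I would test \eqref{eq:algosdorder2-1} successively with $p\,U_i^k$, with $q\,\partial_t(\mathcal{I}^iU_i^k)$, and with $-q\,\divts{\Gamma_{i,j}}(\psi_{i,j}^2 s\,\gradts{\Gamma_{i,j}}U_i^k)$, integrating by parts in space; these are the discrete analogues of \eqref{eq:estcontorder2-1}, \eqref{eq:estcontorder2-2} and \eqref{eq:estcontorder2-4}, while the $\rv_{i,j}$-multiplier producing \eqref{eq:estcontorder2-3} now vanishes identically. Because $\av_i=0$ the first-order terms and all the cross terms carrying the factor $p_{i,j}-p_{j,i}-\av_i\cdot\nv_i$ disappear, so the delicate trace estimates \eqref{eq:esttrace}--\eqref{eq:esttrace2} are no longer needed. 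The single new ingredient, relative to the continuous proof, is the handling of the time derivatives $\partial_t(\mathcal{I}^i\,\cdot)$: here inequality \eqref{eq:interpradau2}, applied in turn to $U_i^k$, to the components of $\sqrt{\nu_i}\,\nabla U_i^k$, and to $\psi_{i,j}\sqrt{s}\,\gradts{\Gamma_{i,j}}U_i^k$, replaces each such term by the telescoping slab-endpoint values $\tfrac12[(\cdot)(t_{n+1}^-)^2-(\cdot)(t_n^-)^2]$, precisely as the Robin case did for the $L^2$ norm.

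Adding the three tested identities, the combined interface contribution on $\Gamma_{i,j}$ reduces to $-\int_{\Gamma_{i,j}}(\nu_i\partial_{\nv_i}U_i^k)\,S_{i,j}U_i^k$, where $S_{i,j}U_i^k=p\,U_i^k+q\bigl(\partial_t(\mathcal{I}^iU_i^k)-\divs{\Gamma_{i,j}}(s\,\grads{\Gamma_{i,j}}U_i^k)\bigr)$, and the complicated algebraic identity \eqref{eq:esttransmcd} collapses to the elementary
\begin{equation*}
\bigl(\nu_i\partial_{\nv_i}U_i^k+S_{i,j}U_i^k\bigr)^2-\bigl(\nu_i\partial_{\nv_i}U_i^k-S_{i,j}U_i^k\bigr)^2=4\,(\nu_i\partial_{\nv_i}U_i^k)\,S_{i,j}U_i^k .
\end{equation*}
Summing over the slabs (the error vanishing at $t=0$ kills every initial term) I reach, on each subdomain, a nonnegative energy plus the \emph{outgoing} square $\tfrac14\int_0^T\!\!\int_{\Gamma_{i,j}}(\nu_i\partial_{\nv_i}U_i^k-S_{i,j}U_i^k)^2$, bounded above by the \emph{incoming} square $\tfrac14\int_0^T\!\!\int_{\Gamma_{i,j}}(\nu_i\partial_{\nv_i}U_i^k+S_{i,j}U_i^k)^2$. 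Inserting the transmission condition \eqref{eq:algosdorder2-2} and using that $P^i$ is an $L^2((0,T)\times\Gamma_{i,j})$ contraction turns the incoming square into $\tfrac14\int_0^T\!\!\int_{\Gamma_{i,j}}(\nu_j\partial_{\nv_i}U_j^{k-1}+\widetilde{S}_{i,j}U_j^{k-1})^2$.

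The one point that genuinely requires care — and the reason $\widetilde{S}_{i,j}$ was defined with the \emph{sender's} interpolation $\mathcal{I}^j$ rather than $\mathcal{I}^i$ — is that this incoming square must telescope against the outgoing square produced on $\Omega_j$ at the previous iteration, even though the two subdomains carry different time grids. Since $\nv_i=-\nv_j$ and $\widetilde{S}_{i,j}=S_{j,i}$ (both use $\mathcal{I}^j$, the same $s$, and $p_{i,j}=p_{j,i}$), one checks that $(\nu_j\partial_{\nv_i}U_j^{k-1}+\widetilde{S}_{i,j}U_j^{k-1})^2=(\nu_j\partial_{\nv_j}U_j^{k-1}-S_{j,i}U_j^{k-1})^2$, which is exactly the $\Gamma_{j,i}$ term of the outgoing square at iteration $k-1$; the contraction of $P^i$ is what decouples the grids and permits this comparison. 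Setting
\begin{equation*}
B^k=\frac14\sum_{i\in\inter{1,I}}\sum_{j\in{\cal N}_i}\int_0^T\!\!\int_{\Gamma_{i,j}}\bigl(\nu_i\partial_{\nv_i}U_i^k-S_{i,j}U_i^k\bigr)^2\,d\sigma\,d\tau
\end{equation*}
and summing over $i$ yields the monotone inequality $\sum_{i\in\inter{1,I}}\bigl(\|U_i^k(T)\|_i^2+q\interleave U_i^k(T)\interleave_i^2\bigr)+B^k\le B^{k-1}$. Uniqueness (hence existence for the square system) follows by dropping the superscript, and convergence follows by summing over $k$: the series $\sum_k (B^{k-1}-B^k)$ telescopes and is bounded by $B^0$, forcing each controlled energy to tend to zero, exactly as in the Robin case.
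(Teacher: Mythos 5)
Your overall architecture is the paper's: test the error equation on each slab with the three multipliers, use the Gauss--Radau inequality \eqref{eq:interpradau2} to telescope the slab-endpoint energies, convert the interface term by the polarization identity into outgoing-minus-incoming quarter-squares, insert the transmission condition, use that $P^i$ is an $L^2$ contraction and that $\widetilde{S}_{i,j}=S_{j,i}$ under the stated hypotheses, and conclude from the monotone decrease of $B^k$ (with uniqueness for \eqref{eq:dgdiscreteall} obtained by dropping the iteration index). All of that is correct and matches the paper's proof.

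There is, however, one step that does not close as you have written it: your third multiplier $-q\,\divts{\Gamma_{i,j}}(\psi_{i,j}^2 s\,\gradts{\Gamma_{i,j}}U_i^k)$, imported from the continuous curved-interface proof. The cutoff $\psi_{i,j}$ necessarily has $\nabla\psi_{i,j}\neq 0$ somewhere in the strip, so the terms $\int\nu_i\nabla U_i^k\cdot\nabla\bigl(\divts{\Gamma_{i,j}}(\psi_{i,j}^2 s\,\gradts{\Gamma_{i,j}}U_i^k)\bigr)$ and $\int c_i U_i^k\,\divts{\Gamma_{i,j}}(\psi_{i,j}^2 s\,\gradts{\Gamma_{i,j}}U_i^k)$ produce, exactly as in \eqref{eq:estgradgrad} and \eqref{eq:estcontorder2-4}, sign-indefinite lower-order remainders of the type $C\int_0^t\|\nabla U_i^k\|_i^2+\|U_i^k\|_i^2$ on the wrong side of the inequality. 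In the continuous proof these are absorbed at the very end by Gr\"{o}nwall's lemma; but Gr\"{o}nwall is precisely what is unavailable in the nonconforming-in-time setting, because the global-in-time projection $P^i$ in the transmission condition forbids the slab-by-slab absorption --- this is the very reason the theorem carries the restrictive hypotheses $\av_i=0$, $\rv_{i,j}=0$. So your final inequality $\sum_i(\cdots)+B^k\le B^{k-1}$ does not follow with your choice of multiplier. The fix is what the paper does: since the decomposition is into strips with parallel planar interfaces, $\Delta_{\Gamma_{i,j}}$ is a globally defined constant-coefficient operator on $\Omega_i$ (the same for both interfaces of the strip), so one multiplies by $-sq\,\Delta_{\Gamma_{i,j}}U_i^k$ with no cutoff; this commutes with ${\cal I}^i$ and with the spatial integrations by parts, produces the telescoping term $\|\grads{\Gamma_{i,j}}U_i^k((t_n^i)^-)\|_i^2$ and the dissipation $\interleave\grads{\Gamma_{i,j}}U_i^k\interleave_i^2$ with no remainder, and the rest of your argument then goes through verbatim.
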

\begin{proof}
We consider the algorithm \eqref{eq:algosdorder2} on the error, so we
suppose $f=u_0=0$. As in the continuous case, the proof is based on energy estimates containing
the term
$$\int_{I_n^i} \int_{\Gamma_{i,j}}
    \left(\nu_i\partial_{\nv_i} u_i^k
  + \mathcal{S}_{i,j} U_i^k\right)^2\,d\sigma\,d\tau,
$$
and that we derive by multiplying successively the first equation of
\eqref{eq:algosdorder2} by the terms $\,U_i^k$, $\partial_t ({\cal
  I}^iU_i^k)$, and $-\Delta_{\Gamma_{i,j}}\, U_i^k$.  We set
$\interleave\varphi\interleave_i^2= \|\sqrt{\nu_i}\,\nabla
\varphi\|^2_{L^2(\Omega_i)} +
\|\sqrt{c_i}\,\varphi\|^2_{L^2(\Omega_i)}$.  We multiply the first
equation of \eqref{eq:algosdorder2} by $\,U_i^k$, we integrate on
$I_n^i \times \Omega_i$ then integrate by parts in space and use
\eqref{eq:interpradau2}:
\begin{multline}\label{eq:estdorder2-1}
 \frac{1}{2}\|U_i^k(t_{n+1}^-)\|^2_i
 +\int_{I_n^i}\interleave U_i^k(\tau,\cdot)\interleave_i^2 \,d\tau
 -\int_{I_n^i} \int_{\Gamma_{i,j}} \nu_i\partial_{\nv_i} u_i^k
 \, U_i^k \, d\sigma\,d\tau
 \le \frac{1}{2}\|U_i^k(t_{n}^-)\|^2_i.\hspace{5mm}
 \end{multline}
We multiply the first equation of \eqref{eq:algosdorder2} by
$\partial_t ({\cal I}^iU_i^k)$, integrate on $I_n^i \times \Omega_i$
and then integrate by parts in space and use \eqref{eq:interpradau2}:
\begin{multline}\label{eq:estdorder2-2}
\frac{1}{2}\interleave U_i^k(t_{n+1}^-)\interleave_i^2
 +\int_{I_n^i} \|\partial_t ({\cal I}^iU_i^k)\|^2_{i} \,d\tau
 - \int_{I_n^i} \int_{\Gamma_{i,j}} \nu_i\partial_{\nv_i} u_i^k
  \, \partial_t ({\cal I}^iU_i^k) \, d\sigma\,d\tau
 \le \frac{1}{2}\interleave U_i^k(t_{n}^-)\interleave_i^2.\hspace{5mm}
\end{multline}
Now we multiply the first equation of \eqref{eq:algosdorder2} by
$-\Delta_{\Gamma_{i,j}}\, U_i^k$ integrate on $I_n^i \times \Omega_i$
and integrate by parts in space and use \eqref{eq:interpradau2}:
\begin{multline}\label{eq:estdorder2-4}
\frac{1}{2}\|  \grads{\Gamma_{i,j}} U_i^k(t_{n+1}^-)\|_i^2
 +\int_{I_n^i}\interleave \grads{\Gamma_{i,j}} U_i^k(t,.) \interleave_i^2\\
 + \int_{I_n^i}\int_{\Gamma_{i,j}} \nu_i\partial_{\nv_i} u_i^k
   \, \Delta_{\Gamma_{i,j}}\, u_i^k  \,d\sigma\,d\tau
 \le \frac{1}{2}\| \grads{\Gamma_{i,j}} U_i^k(t_{n}^-)\|_i^2,
\end{multline}
where we have used the fact that $\Delta_{\Gamma_{i,j}}$ is a
constant coefficient operator.
Let
\begin{equation*}
  E^n(U_i^k)=  \frac{p}{2}\|U_i^k((t^i_{n})^-)\|^2_{i}
   + \frac{q}{2}
  \interleave U_i^k((t^i_{n})^-)\interleave_i^2
  +\frac{sq}{2} \|\grads{\Gamma_{i,j}}U_i^k((t^i_{n})^-)\|^2_{i}.
\end{equation*}
Multiplying \eqref{eq:estdorder2-1} by $p$, \eqref{eq:estdorder2-2} by
$q$ and \eqref{eq:estdorder2-4} by $sq$, and adding the three
equations with \eqref{eq:estdorder2-1}, we get
\begin{multline*}
 E^{n+1}(U_i^k)
 + \int_{I_n^i}[\,
 p\interleave U_i^k(t,\cdot)\interleave_i^2 \,
 +q\|\partial_t({\cal I}_n^iU_i^k)\|^2_{i}\,
 +sq \interleave \grads{\Gamma_{i,j}}U_i^k(t,\cdot)\interleave_i^2 \,]\, dt\\
 -\sum_{j\in {\cal N}_i}\int_{I_n^i}\int_{\Gamma_{ij}}
  \nu_i \partial_{\nv_i} u_i^k\, S_{ij} U_i^k
  dx_2\,dt
  \le E^n(U_i^k).
\end{multline*}
It can be rewritten as
\begin{multline*}
  E^{n+1}(U_i^k)
   +  \int_{I_n^i}[\,
   \interleave U_i^k(t,\cdot)\interleave_i^2 \,
   +q\|\partial_t({\cal I}_n^iU_i^k)\|^2_{i}\,
   +sq \interleave \grads{\Gamma_{i,j}}U_i^k(t,\cdot)\interleave_i^2 \,]\, dt\\
  + \frac{1}{4}\sum_{j\in {\cal N}_i}\int_{I_n^i}\int_{\Gamma_{ij}}
    (\nu_i \partial_{\nv_i} u_i^k-\,S_{ij}U_i^k)^2
     \le
  E^n(U_i^k)
  + \frac{1}{4}\sum_{j\in {\cal N}_i}\int_{I_n^i}\int_{\Gamma_{ij}}
    (\nu_i \partial_{\nv_i} u_i^k+\,S_{ij}U_i^k)^2.
\end{multline*}
We now sum in time for $0 \le n \le N$, and use the transmission
condition.  Since $E^0(U_i^{k})=0$, we obtain
\begin{multline*}
  E^{N+1}(U_i^k)
  + \int_0^T\,[\,
   \interleave U_i^{k}(t,\cdot)\interleave_i^2 \,
   +q\|\partial_t({\cal I}_n^iU_i^{k})\|^2_{i}\,
   +sq \interleave \grads{\Gamma_{i,j}}U_i^{k}(t,\cdot)\interleave_i^2 \,]\, dt\\
  + \frac{1}{4}\sum_{j\in {\cal N}_i}\int_{0}^T\int_{\Gamma_{ij}}
    (\nu_i \partial_{\nv_i} u_i^k-\,S_{ij}U_i^k)^2 \,dt
     \le
   \frac{1}{4}\sum_{j\in {\cal N}_i}\int_{0}^T\int_{\Gamma_{ij}}
    (P^i(-\nu_j \partial_{\nv_j}U_j^{k-1}
   +\,\widetilde{S}_{ij}U_j^{k-1}))^2\,dt.
\end{multline*}
We sum up over the subdomains and use the fact that the projection is a
contraction. Since we are in the case where $p_{ij}=p$, $q_{ij}=q$,
$r_{ij}=0$ and $s_{ij}=s$, we have $\widetilde{S}_{ij}=S_{ji}$. Thus,
we can sum up over the iterates, the boundary terms cancel out, and we
obtain
\begin{multline*}
  \sum_{k=1}^K \sum_{i=1}^I \left(
  E^{N+1}(U_i^k)+ \int_{0}^T[\,
   \interleave U_i^{k}(t,\cdot)\interleave_i^2 \,
   +q\|\partial_t({\cal I}_n^iU_i^{k})\|^2_{i}\,
   +sq \interleave \grads{\Gamma_{i,j}}U_i^{k}(t,\cdot)\interleave_i^2 \,]\, dt
   \right)\\
  + \frac{1}{4}\sum_{i=1}^I\int_{0}^T\int_{\Gamma_{i}}
    (\nu_i \partial_{\nv_i} u_i^k-\,{\cal S}_{ij}U_i^K)^2
    \le\frac{1}{4}\sum_{i=1}^I\int_{0}^T\int_{\Gamma_{i}}
    (\nu_i \partial_{\nv_i} U_i^0 -\,{\cal S}_{ij}U_i^{0})^2.
  \end{multline*}
We conclude as in the proof of Theorem \ref{th:convdiscRobin}.
\end{proof}

We now state the error estimate in the Robin case.

\subsection{Error estimates in the Robin case}

\begin{theorem}\label{th:errorestimate}
  If $\div \av=0$, $p_{i,j}-\frac{\av_i \cdot \nv_i}{2}=p_{j,i}-\frac{\av_j \cdot \nv_j}{2}=p>0$, and $q_{i,j}=0$, the error between
  $\uu$ and the solution $\uud_i$ of \eqref{eq:dgdiscreteall} is estimated by:
 \begin{equation}\label{eq:errestimatesubdomain1}
  \sum_{i=1}^I\|\uu-\uud_i\|^2_{L^\infty(0,T,L^2(\Omega_i))}\le C k^{2(d+1)}
  \|\partial_t^{d+1}\uu\|^2_{L^2(0,T;H^2(\Omega))}.
 \end{equation}
\end{theorem}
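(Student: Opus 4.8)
The plan is to transport the energy estimate \eqref{eq:est5b} of the Robin case onto the true error and to use Thom\'ee's single-domain estimate as the consistency ingredient. By linearity and the equivalence of \eqref{eq:adall} and \eqref{eq:dec}, the restriction $\uu_i:=\uu_{|\Omega_i}$ solves the local problem on $\Omega_i$ with the \emph{exact} Robin data $g_i:=(\nu_j\partial_{\nv_i}-\av_j\cdot\nv_i)\uu_j+p_{i,j}\uu_j$ on $\Gamma_{i,j}$. Subtracting the scheme \eqref{eq:dgdiscreteall} from the (exactly satisfied) weak form of $\uu_i$, the error $E_i:=\uu_i-\uud_i$ solves the same dG scheme with a volume residual coming from the time discretization and from $P^if$, and with interface data which, tested against polynomials, reduces to $P^i[(\nu_j\partial_{\nv_i}-\av_j\cdot\nv_i+p_{i,j})E_j]$; note that the part $(\mathrm{Id}-P^i)g_i$ drops out against polynomials, so the interface consistency will reappear only through $E_j$. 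I would then split $E_i=\rho_i+\xi_i$, where $\rho_i$ is the dG time-projection error of $\uu_i$ and $\xi_i$ is discrete.

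Next I would run, on $\xi_i$, exactly the computation that produced \eqref{eq:est5b}: test the equation satisfied by $\xi_i$ with $\xi_i$, integrate over $I_n^i\times\Omega_i$, use \eqref{eq:interpradau2} for the time term and the algebraic identity \eqref{tototiti} to turn the interface contribution into a difference of squared Robin traces. The hypotheses $\div\av=0$ and $p_{i,j}-\frac{\av_i\cdot\nv_i}{2}=p_{j,i}-\frac{\av_j\cdot\nv_j}{2}=p$ place us in the $C_1=0$ situation of the Remark following Theorem~\ref{th:convcont}, so that $a$ is coercive and, after summing over time intervals, interfaces and subdomains and using that $P^i$ is an $L^2$ contraction, the squared incoming trace of subdomain $i$ is dominated by the squared outgoing trace of its neighbour; these telescope upon summation, up to the consistency carried by $\rho$. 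Crucially, no Gr\"{o}nwall argument is available here, the global-in-time projectors $P^i$ forbidding it, so the cancellation must be exact up to consistency.

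It then remains to bound the consistency by $k^{2(d+1)}$. The volume part is controlled precisely by the single-domain analysis behind Thom\'ee's estimate, giving $Ck^{2(d+1)}\|\partial_t^{d+1}\uu\|^2_{L^2(0,T;H^2(\Omega_i))}$ (with $H_2^2=H^2$ since $q_{i,j}=0$). The interface consistency is $P^i[(\nu_j\partial_{\nv_i}-\av_j\cdot\nv_i+p_{i,j})\rho_j]$, the neighbour's projection error read by subdomain $i$ through the non-matching projector $P^i$; since $\rho_j$ is the time-projection error of the smooth $\uu_j$ it is $O(k^{d+1})$, and the trace/flux theorem converts the normal flux $\nu_j\partial_{\nv_i}\rho_j$ into the $H^2(\Omega)$-norm of $\partial_t^{d+1}\uu$, which is exactly the regularity on the right-hand side of \eqref{eq:errestimatesubdomain1}. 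Summing the subdomain estimates then yields \eqref{eq:errestimatesubdomain1}.

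The step I expect to be the main obstacle is the coupling one: on matching grids the incoming trace of $i$ equals the outgoing trace of $j$ and the telescoping is exact, but with non-conforming time grids the neighbour's projection error $\rho_j$ survives inside the projected interface data and produces cross terms with the telescoping part $\xi_j$. Since the outgoing flux trace is only controlled through the boundary term $B$ of \eqref{eq:est5b}, and not by the volume norms, these cross terms cannot be absorbed by a naive Young inequality; closing the estimate requires exploiting the contraction of $P^i$ together with the clean $C_1=0$ structure rather than a Gr\"{o}nwall constant, and is precisely where the non-conformity of the grids genuinely enters the analysis.
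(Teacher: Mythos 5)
Your architecture coincides with the paper's: the proof there likewise splits the error as $\uud_i-\uu|_{\Omega_i}=\Theta_i+\rho_i$, with $\rho_i=P_i^-(\uu|_{\Omega_i})-\uu|_{\Omega_i}$ a (Radau-type) time-projection error, runs the Robin energy identity of Theorem \ref{th:convcont} on the discrete part $\Theta_i$ using \eqref{eq:interpradau2} and \eqref{tototiti}, telescopes the squared Robin traces over interfaces and time slabs with no Gr\"{o}nwall constant, and controls the volume and interface consistency by the classical projection estimates in $H^2$ combined with a trace theorem. So the route is the same, not a different one.

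The one genuine gap is precisely the step you flag at the end, and the resolution is not ``contraction plus the $C_1=0$ structure'' but an orthogonality argument you do not name. After substituting the transmission condition, the incoming squared trace of subdomain $i$ on a slab $I_n^i$ has the form
\[
\int_{I_n^i}\int_{\Gamma_{i,j}}\bigl(P^iA_j-(1-P^i)B_j\bigr)^2\,d\sigma\,d\tau,
\]
where $A_j$ carries the neighbour's discrete Robin trace (to be telescoped) and $B_j$ the neighbour's projected exact solution $W_j$ (pure consistency). Since $P^i$ is the $L^2$ projection onto $\PP_d$ on each subinterval of ${\cal T}_i$, the operators $P^i$ and $1-P^i$ are mutually orthogonal there, so the cross term you worry about vanishes \emph{identically}: the square splits as $\|P^iA_j\|^2+\|(1-P^i)B_j\|^2\le\|A_j\|^2+\|(1-P^i)B_j\|^2$, with no Young inequality and no absorption needed. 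The first piece cancels against the neighbour's outgoing trace in the sum over subdomains and iterations in time; the second is bounded, via the trace theorem applied to the normal flux, by $C\int_{I_n^i}\|(1-P^i)(\uu|_{\Omega_i})\|^2_{H^2(\Omega_i)}$, which the classical estimates control by $Ck^{2(d+1)}\|\partial_t^{d+1}\uu\|^2_{L^2(0,T;H^2(\Omega))}$. With this Pythagoras step inserted at the point where you stopped, your argument closes and reproduces \eqref{eq:errestimatesubdomain1}.
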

\begin{proof} We introduce the projection operator $P_i^-$ as
\begin{equation*}\label{eq:projcont}
\begin{split}
&\forall n \in \inter{1,N_i}, P_i^-\phi \in \PP_{d}(I_n^i),\\
&P_i^-\phi(t_{n+1}^i)=\phi(t_{n+1}^i),\quad \forall \psi_{i,j} \in \PP_{d-1}(I_n^i),
\int_{I_n^i} (P_i^-\phi-\phi)(t)\psi_{i,j}(t)\, dt=0.
\end{split}
\end{equation*}
We define $W_i=P_i^-(u|_{\Omega_i})$, $\Theta_i=\uud_i-W_i$ and
$\rho_i=W_i-u|_{\Omega_i}$. Classical projection estimates
(\cite{thomee:1997:FEM}) yield the estimate on $\rho_i$:
\[
\sum_{i=1}^I\|\rho_i\|^2_{L^\infty(0,T,L^2(\Omega_i))}\le C k^{2(d+1)}  \|\partial_t^{d+1}\uu\|^2_{L^2(0,T;L^2(\Omega))}.
\]
Thus, since $\uud_i-u|_{\Omega_i}=\Theta_i+\rho_i$, it suffices to
prove estimate \eqref{eq:errestimatesubdomain1} for $\Theta_i$. Now,
thanks to the equations on $u$ and $\uud_i$, and the identity
$\frac{d}{dt}\mathcal{I}^iP_i^-=\mathcal{P}^i\frac{d}{dt}$, $\Theta_i$
satisfies:
\begin{eqnarray}\label{toto}
    \begin{array}{c}
    \partial_t ({\cal I}^i\,\Theta_i) +\av\cdot\nabla \Theta_i-\nu\Delta \Theta_i+c\Theta_i
    = -\av\cdot\nabla \rho_i+\nu\Delta \rho_i-c\rho_i\\
    +(1-P^i)(\partial_tu-f) \mbox{ in } \Omega_i\times (0,T), \\
 \ds\bigl(\nu_i\,\partial_{\nv_i}- \av_i\cdot\nv_i \bigr)\Theta_i +
 p_{i,j}\,\Theta_i  =P^i(\bigl(\nu_j\,\partial_{\nv_i}- \av_j\cdot\nv_i \bigr)\Theta_j +
 p_{i,j}\,\Theta_j )\hspace{4.3cm}\\
 -(1-P^i)(\bigl(\nu_j\,\ds\partial_{\nv_i}- \av_j\cdot\nv_i \bigr)W_j +
 p_{i,j}\, W_j ) \mbox{ on } \Gamma_{ij}\times (0,T),\,j\in{\cal N}_i. \\
      \end{array}
\end{eqnarray}
Multiply the first equation of \eqref{toto} by $\,\Theta_i$, integrate
on $(t_n^i, t_{n+1}^i)\times \Omega_i$, using \eqref{eq:interpradau2}
and integrate by parts in space. Terminate with Cauchy Schwarz
inequality:
\begin{multline*}
 \frac{1}{2}\|\Theta_i((t^i_{n+1})^-)\|^2_{i}
 +\int_{I_n^i}\interleave\Theta_i(t,\cdot)\interleave_i^2 \,dt
  -\int_{I_n^i}\int_{\Gamma_{i}}(\nu_i\partial_{\nv_i}\Theta_i
  -\frac{\av_i\cdot\nv_i}{2}\Theta_i)\, \Theta_i  dx_2\,dt \\
   \le
  \frac{1}{2}\|\Theta_i((t^i_{n})^-)\|^2_{i}+C\int_{I_n^i}\|\rho_i(t,\cdot)\|^2_{H^2(\Omega_i)}\,dt.
  \end{multline*}
Rewriting the boundary integral using \eqref{tototiti}, we obtain
\begin{multline*}
 \frac{1}{2}\|\Theta_i((t^i_{n+1})^-)\|^2_{i}
 +\int_{I_n^i}\interleave\Theta_i(t,\cdot)\interleave_i^2 \,dt
  +\frac{1}{4p}\sum_{j\in {\cal N}_i}\int_{I_n^i}\int_{\Gamma_{ij}}
  (\nu_i\partial_{\nv_i}\Theta_i-\av_i\cdot\nv_i\Theta_i-p_{j,i}\Theta_i)^2\, dx_2\,dt \\
   \le
 \frac{1}{4p}\sum_{j\in {\cal N}_i}\int_{I_n^i}\int_{\Gamma_{ij}}
 (\nu_i\partial_{\nv_i}\Theta_i-\av_i\cdot\nv_i\Theta_i+p_{i,j}\Theta_i)^2\, dx_2+
 \frac{1}{2}\|\Theta_i((t^i_{n})^-)\|^2_{i}+C\int_{I_n^i}\|\rho_i(t,\cdot)\|^2_{H^2(\Omega_i)}\,dt.
\end{multline*}
Using the transmission condition in \eqref{toto} together with the
fact that $P^i$ and $1-P^i$ are orthogonal to each other
and have norm 1, we get by a trace theorem
\begin{multline}\label{toto3}
 \frac{1}{2}\|\Theta_i((t^i_{n+1})^-)\|^2_{i}
 +\int_{I_n^i}\interleave\Theta_i(t,\cdot)\interleave_i^2 \,dt
  +\frac{1}{4p}\sum_{j\in {\cal N}_i}\int_{I_n^i}\int_{\Gamma_{ij}}
  (\nu_i\partial_{\nv_i}\Theta_i-\av_i\cdot\nv_i\Theta_i-p_{j,i}\Theta_i)^2\, dx_2\,dt \\
   \hspace{1cm}\le
 \frac{1}{4p}\sum_{j\in {\cal N}_i}\int_{I_n^i}\int_{\Gamma_{ij}}
 (\nu_j\partial_{\nv_j}\Theta_j-\av_j\cdot\nv_j\Theta_j-p_{i,j}\Theta_i)^2\, dx_2+
 \frac{1}{2}\|\Theta_i((t^i_{n})^-)\|^2_{i}\\
 +C\int_{I_n^i}\|\rho_i(t,\cdot)\|^2_{H^2(\Omega_i)}\,dt
 +C\int_{I_n^i}\|(1-P^i)(u|_{\Omega_i})(t,\cdot)\|^2_{H^2(\Omega_i)}\,dt.\hspace{0.7cm}
\end{multline}
Classical error estimates \cite{thomee:1997:FEM}  imply:
\begin{equation*}\label{toto4}
  \int_0^T\|\rho_i(t,\cdot)\|^2_{H^2(\Omega_i)}\,dt
  +\int_0^T\|(1-P^i)(u|_{\Omega_i})(t,\cdot)\|^2_{H^2
  (\Omega_i)}\,\le C k^{2(d+1)}  \|\partial_t^{d+1}\uu\|^2_{L^2(0,T;H^{2}(\Omega_i))}.
\end{equation*}
Summing \eqref{toto3} in $i$ and $n$, and using the previous equation yields
\eqref{eq:errestimatesubdomain1}.
\end{proof}
\section{Space-time nonconforming algorithm}
%
In this section we describe the implementation of algorithm
\eqref{eq:algosdorder2}, especially in the cases $d=0$ and $d=1$.  We
start from the semi-disrete in time scheme and use finite elements for
the space discretization in each subdomain. In order to permit
non-matching grids in time and space on the boundary, we extend the
nonconforming approach in \cite{gander:2005:NZF}.

We describe first the implementation of algorithm
\eqref{eq:algosdorder2} at the semi-discrete in time level, and then at
the space-time discret level.

\subsection{Time discretization}
We recall the subdomain scheme in time, and give it in details for $d=0$ and $d=1$. Then
we describe the computation of the transmission conditions in algorithm
\eqref{eq:algosdorder2}.

\subsubsection{Interior scheme}\label{sec:interiorscheme}
We consider the subdomain problem in the algorithm
\eqref{eq:algosdorder2} at iteration $k$ in $\co=\Omega_i$.  Let
${\cal B}_i=H_1^1(\Omega_i)$ if $q > 0$, ${\cal B}_i=H^1(\Omega_i )$
if $q = 0$.  We set $U=U_i^k \in {\cal P}_d({\cal B}_i,{\cal T}_i)$,
and we omit the subscript $i$ for the local time scheme to simplify
the notations :
\begin{eqnarray}\label{eq:dGstronginterpd}
  \begin{array}{c}
    \partial_t ({\cal I} U ) +\div(\av U -\nu\nabla U)+cU= P f \mbox{ in }
    \co\times (0,T) \\
    \ds\bigl(\nu\,\partial_{\nv}- \av\cdot\nv \bigr)U +
    p\,U + q(\partial_t ({\cal I} U)   + \divs{\Gamma}(\rv U -s\grads{\Gamma}U))
    =Pg \mbox{ on } \Gamma\times (0,T).
  \end{array}
\end{eqnarray}
%

\textbf{Case $d=0$}
\vspace{2mm}

In the case $d=0$, the approximating functions are piecewise constant
in time, then $U(t)=U^{n+1}=U^{n}_+$ in $I_n$, we have ${\cal
  I}_nU=U^{n}+\frac{t-t_{n}}{k_n}(U^{n+1}-U^{n})$,
$P\xi=\frac{1}{k_n}\int_{I_n}\xi(\cdot,s)\,ds$ and the method reduces
to the modifed backward Euler method
\begin{eqnarray}\label{eq:dGstronginterpd0}
  \begin{array}{c}
     \ds\frac{U^{n+1}}{k_n}+\div(\av U^{n+1}-\nu\nabla U^{n+1})+cU^{n+1}
     \ds= \frac{U^{n}}{k_n}+\frac{1}{k_n}\int_{I_n}f(\cdot,s)\,ds\mbox{ in }\co \\
    \ds\bigl(\nu\,\partial_{\nv}- \av\cdot\nv \bigr)U^{n+1} +
    p\,U^{n+1} + q(\frac{U^{n+1}}{k_n}  + \divs{\Gamma}(\rv U^{n+1} -s\grads{\Gamma}U^{n+1}))\\
    \ds= q\frac{U^{n}}{k_n} +\frac{1}{k_n}\int_{I_n}g(\cdot,s)\,ds\mbox{ on } \Gamma.
  \end{array}
\end{eqnarray}

\textbf{Case $d=1$}
\vspace{2mm}

In that case, for piecewise linear functions of $t$, using a basis of
Legendre polynomials we may write,
$U(t)=U^{n+1}_0+2\frac{t-t_{n+{1/2}}}{k_n}U_1^{n+1}$, on $I_n^i$,
with $t_{n+{1/2}}=\frac{t_{n}+t_{n+1}}{2}$, $U^n=U(t_n)$, and we have on $I_n^i$ :
\begin{eqnarray*}
    \ds{\cal I}_nU={1 \over 4}(5 U_0^{n+1}-U_1^{n+1}-U^{n})
     +(U_0^{n+1}+U_1^{n+1}-U^n) \bigl(\frac{t-t_{n+{1/2}}}{k_n}\bigr)\nonumber\\
     +3(-U_0^{n+1}+U_1^{n+1}+U^{n})
     \bigl(\frac{t-t_{n+{1/2}}}{k_n}\bigr)^2,
\end{eqnarray*}
and $P\xi=\xi_0+2\frac{t-t_{n+1/2}}{k_n}\xi_1$ with
\begin{eqnarray}\label{eq:systprojd1}
   \left\{\begin{array}{l}
      \xi_0=\frac{1}{k_n}\int_{I_n}\xi(\cdot,s)\,ds\\
      \xi_1=\frac{6}{k_n}\int_{I_n}\frac{s-t_{n+1/2}}{k_n}\xi(\cdot,s)\,ds
    \end{array}\right.
\end{eqnarray}
Thus, we obtain for the determination
of $U^{n+1}_0$ and $U^{n+1}_1$ the system
\begin{equation}\label{eq:dGstronginterpd1}
  \begin{array}{l}
    \begin{array}{l}
     \ds\frac{1}{k_n}(U_0^{n+1}+U_1^{n+1})+\div(\av U_0^{n+1}
     -\nu\nabla U_0^{n+1})+cU_0^{n+1} = \frac{U^{n}}{k_n}+\frac{1}{k_n}\int_{I_n}f(\cdot,s)\,ds
    \\
      \ds\frac{3}{k_n}(-U_0^{n+1}+U_1^{n+1})+\div(\av U_1^{n+1}
         -\nu\nabla U_1^{n+1})+cU_1^{n+1}\\
      \hspace{30mm}\ds= -\frac{3U^{n}}{k_n}
      +\frac{6}{k_n}\int_{I_n}\frac{s-t_{n+1/2}}{k_n}f(\cdot,s)\,ds\mbox{ in }    \co
    \end{array}
    \\
    \begin{array}{l}
      \ds\bigl(\nu\,\partial_{\nv}- \av\cdot\nv \bigr)U_0^{n+1} +
    p\,U_0^{n+1} + q(\frac{1}{k_n}(U_0^{n+1}+U_1^{n+1}) + \divs{\Gamma}(\rv U_0^{n+1} -s\grads{\Gamma}U_0^{n+1}))
    \\
     \hspace{30mm} \ds= q\frac{U^{n}}{k_n} +\frac{1}{k_n}\int_{I_n}g(\cdot,s)\,ds
      \\
      \ds\bigl(\nu\,\partial_{\nv}- \av\cdot\nv \bigr)U_1^{n+1} +
    p\,U_1^{n+1} + q(\frac{3}{k_n}(-U_0^{n+1}+U_1^{n+1}) + \divs{\Gamma}(\rv U_1^{n+1} -s\grads{\Gamma}U_1^{n+1}))
    \\
     \hspace{30mm} \ds= -q\frac{3U^{n}}{k_n} +\frac{6}{k_n}\int_{I_n}\frac{s-t_{n+1/2}}{k_n}g(\cdot,s)\,ds\mbox{ on } \Gamma.
  \end{array}
  \end{array}
\end{equation}

Multiplying the first equation of \eqref{eq:dGstronginterpd0} by $v \in {\cal B}_i$
(resp. the first equation of \eqref{eq:dGstronginterpd1} by $v \in {\cal B}_i$ and
the second equation of \eqref{eq:dGstronginterpd1} by $w \in {\cal B}_i$), integrating by parts on $\co$,
and using the boundary conditions, the variational formulation is:
\vspace{3mm}

\textbf{Case $d=0$ (Variational formulation)}
%
\begin{multline}\label{eq:semidiscreteq0}
  m(U^{n+1},v)+k_n a(U^{n+1},v)=\\m(U^{n},v)
  +\int_{I_n^i}(f(\cdot,s),v)ds+\int_{I_n^i}(g(\cdot,s),v)_\Gamma ds,
  \quad \forall v \in {\cal B}_i.
\end{multline}

\textbf{Case $d=1$ (Variational formulation)}
%
\begin{multline}\label{eq:semidiscreteq1}
m(U_0^{n+1},v)+k_n a(U_0^{n+1},v)+m(U_1^{n+1},v)\\
  =m(U^{n},v)+\int_{I_n}(f(\cdot,s),v)ds+\int_{I_n}(g(\cdot,s),v)_\Gamma ds,\\
\hspace{-5cm} -m(U_0^{n+1},v)+m(U_1^{n+1},w)+k_n a(U_1^{n+1},w)\\
  \hspace{1cm}=-m(U^{n},v)+\int_{I_n}\frac{2(s-t_{n+1/2})}{k_n}(f(\cdot,s),w)ds\\
    +\int_{I_n}\frac{2(s-t_{n+1/2})}{k_n}(g(\cdot,s),w)_\Gamma ds, \quad \forall v,\,w \in {\cal B}_i.
\end{multline}
\begin{remark}
Equations \eqref{eq:semidiscreteq0} and \eqref{eq:semidiscreteq1}
can be derived directly from \eqref{eq:dGvariat}.  However we will need formulas \eqref{eq:dGstronginterpd0} and \eqref{eq:dGstronginterpd1} in the space nonconforming case. \\
\end{remark}

We now discuss the computation of the right-hand side on the interface
$\Gamma_{i,j}\times (0,T)$ for $j\in {\cal N}_i$ in the algorithm
\eqref{eq:algosdorder2}.

\subsubsection{Transmission terms}\label{sec:boundaryterms}
Let $(g^1_{i,j})$ be a given initial guess
in ${\cal P}_d(L^2(\Gamma_{i,j}),{\cal T}_i)$, for $1 \le i \le I$, $j \in {\cal N}_i$.
Then, at iteration $k \ge 1$, we solve the subdomain problem in $\Omega_i$ :
\begin{subequations}
  \begin{align}
    \label{eq:algosdorder2d-1}
    & \partial_t ({\cal I}^iU_i^k)
     +\div(\av_i U_i^k -\nu_i\nabla U_i^k)
    +c_i\,U_i^k=P^if\mbox{ in }\Omega_i\times(0,T),\\
    \label{eq:algosdorder2d-2}
    &\ds\bigl(\nu_i \partial_{\nv_i}
    -\av_i\cdot\nv_i \bigr)\,U_i^{k} +S_{i,j}U_i^{k} = g^{k}_{i,j}, \ j\in {\cal N}_i
    \mbox{ on }\Gamma_{i,j}\times(0,T),
  \end{align}
\end{subequations}
The function $g^k_{i,j}$ is defined for $k \ge 2$ by
\begin{equation}\label{eq:gk}
   g^k_{i,j}=P^i \tilde{g}^k_{j,i},
\end{equation}
with $\tilde{g}^k_{j,i}$, $k \ge 2$, defined by
\begin{equation*}\label{eq:gktilde}
\tilde{g}^k_{j,i}= \bigl(-(\nu_{j} \partial_{\nv_j}
    -\av_{j}\cdot\nv_{j})\, U_{j}^{k-1}
    +\widetilde{S}_{i,j}U_{j}^{k-1}\bigr).
\end{equation*}
We remark that, for $k \ge 2$,
\begin{multline}
  \tilde{g}^k_{j,i}=-g^{k-1}_{j,i}+S_{j,i}U_{j}^{k-1}+\widetilde{S}_{i,j}U_{j}^{k-1}\\
               \hspace{-2cm}=-g^{k-1}_{j,i}+(p_{i,j}+p_{j,i})U_{j}^{k-1}
    +(q_{i,j}+q_{j,i})\partial_t ({\cal I}^jU_{j}^{k-1})\nonumber\\
  \hspace{2mm}+ q_{i,j}\,(\divs{\Gamma_{i,j}}(\rv_{i,j}U_{j}^{k-1}-s_{i,j}\grads{\Gamma_{i,j}}U_{j}^{k-1}))
   + q_{j,i}\,(\divs{\Gamma_{j,i}}(\rv_{j,i}U_{j}^{k-1}-s_{j,i}\grads{\Gamma_{j,i}}U_{j}^{k-1})).\nonumber
\end{multline}
Once $\tilde{g}^k_{j,i}$ is computed from $U_{j}^{k-1}$, we obtain
$g^k_{i,j}$ from \eqref{eq:gk} as follows : we introduce the basis
functions $(\phi_{n,\alpha}^i)_{0 \le \alpha \le d}$ of polynomial of
degree lower than $d$ on subinterval $I_n^i$, then
\begin{equation*}\label{eq:gkIn}
(g^k_{i,j})_{|I_n^i}=(P^i \tilde{g}^k_{j,i})_{|I_n^i}
  =\sum_{\alpha=0}^d G^{i,k}_{n,\alpha} \phi_{n,\alpha}^i
\end{equation*}
with $G^{i,k}_{n,\alpha} \in L^2(\Gamma_{i,j})$ solution of the system
\begin{equation*}\label{eq:systGproj}
\sum_{\alpha=0}^d G^{i,k}_{n,\alpha} \int_{I_n^i}\phi_{n,\alpha}^i\phi_{n,\beta}^i \,ds
=\int_{I_n^i} \tilde{g}^k_{j,i} \phi_{n,\beta}^i, \quad \beta \in \{0,...,d\}.
\end{equation*}
Thus, the computation of $g^k_{i,j}$ on each $I_n^i$ needs the
computation of terms in the form
\begin{equation}\label{eq:inttildeg}
\int_{I_n^i} \tilde{g}^k_{j,i} \phi_{n,\beta}^i\,ds,
\end{equation}
for $\beta \in \{0,...,d\}$. Recall that $\tilde{g}^k_{j,i}$ is defined on $\Gamma_{i,j}\times I$ and
$\tilde{g}^k_{j,i} \in {\cal P}_d(L^2(\Gamma_{i,j}),{\cal T}_j)$. Thus,
we first write the integral in \eqref{eq:inttildeg} as an integral over $I$ :
let $\Phi_{n,\alpha}^i$ be the function defined on $I$, equal to
$\phi_{n,\alpha}^i$ on $I_n^i$ and equal to zero on $I\backslash I_n^i$. Then
\begin{equation}\label{eq:intItildeg}
 \ds\int_{I_n^i} \tilde{g}^k_{j,i} \phi_{n,\beta}^i\,ds
  =\int_{I} \tilde{g}^k_{j,i} \Phi_{n,\beta}^i\,ds.
\end{equation}
We now decompose $\tilde{g}^k_{j,i}$ on the basis
functions $(\phi_{m,\alpha}^j)_{0 \le \alpha \le d}$ of polynomial of
degree lower than $d$ on each subinterval $I_m^j$ :
\begin{equation*}
(\tilde{g}^k_{j,i})_{|I_m^j}= \sum_{\alpha=0}^d \tilde{G}^{j,k}_{m,\alpha}\phi_{m,\alpha}^j,
\end{equation*}
with $\tilde{G}^{j,k}_{m,\alpha}\in L^2(\Gamma_{i,j})$ solution of the system
\begin{equation*}\label{eq:systGlocal}
\sum_{\alpha=0}^d \tilde{G}^{j,k}_{m,\alpha}\int_{I_m^j}\phi_{m,\alpha}^j\phi_{m,\beta}^j \,ds
=\int_{I_m^j} \tilde{g}^k_{j,i} \phi_{m,\beta}^j, \quad \beta \in \{0,...,d\}.
\end{equation*}
Introducing the function $\Phi_{m,\alpha}^j$ defined on $I$, equal to
$\phi_{m,\alpha}^j$ on $I_m^j$ and equal to zero on $I\backslash I_m^j$, we have
\begin{equation}\label{eq:tildeg}
\tilde{g}^k_{j,i}= \sum_{m=0}^{N_j}\sum_{\alpha=0}^d \tilde{G}^{j,k}_{m,\alpha}\Phi_{m,\alpha}^j,
\end{equation}
Replacing \eqref{eq:tildeg} in \eqref{eq:intItildeg} leads to
\begin{equation*}
  \int_{I_n^i} \tilde{g}^k_{j,i} \phi_{n,\beta}^i\,ds
   = \ds\sum_{m=0}^{N_j}\sum_{\alpha=0}^d \tilde{G}^{j,k}_{m,\alpha}
     \int_{I} \Phi_{m,\alpha}^j \Phi_{n,\beta}^i\,ds.
\end{equation*}
Let ${\mathbb M}^{\alpha,\beta}$ be the projection matrix defined by
\begin{equation*}
({\mathbb M}^{\alpha,\beta})_{n+1,m+1}=\int_{I} \Phi_{m,\alpha}^j \Phi_{n,\beta}^i\,ds.
  \quad 0\le ,n\le N_i,\, 0 \le m \le N_j.
\end{equation*}
%
%
Then we have, for $0 \le n \le N_i$,
\begin{equation*}
\int_{I_n^i} \tilde{g}^k_{j,i} \phi_{n,\beta}^i\,ds
  = \ds \sum_{\alpha=0}^d  ({\mathbb M}^{\alpha,\beta} \tilde{\mathbf G}^{j,k}_{\alpha})_n
\end{equation*}
with $\tilde{\mathbf G}^{j,k}_{\alpha}=(\tilde{G}^{j,k}_{0,\alpha},...,\tilde{G}^{j,k}_{N_j,\alpha})^t$.\\

In the special cases $d=0$ and $d=1$, we obtain :
\vspace{3mm}

\textbf{Case $d=0$}
\vspace{2mm}

In that case there is one basis function $\phi^i_{n,0}=1$ on $I_n^i$, and
\begin{equation*}
\int_{I_n^i} \tilde{g}^k_{j,i} \phi_{n,0}^i\,ds
  = \int_{I_n^i} \tilde{g}^k_{j,i} \,ds\ds
  =({\mathbb M}^{0,0} \tilde{\mathbf G}^{j,k}_{0})_n,
\end{equation*}
with $({\mathbb M}^{0,0})_{n+1,m+1}=\int_I \mathbf{1}_{I_m^j}\mathbf{1}_{I_n^i}\,ds$,
$\tilde{\mathbf G}^{j,k}_{m,0}=\frac{1}{k_m^j}\int_{I_m^j} \tilde{g}^k_{j,i} \,ds$,
$0\le ,n\le N_i,\, 0 \le m \le N_j$.
\vspace{3mm}

\textbf{Case $d=1$}
\vspace{2mm}

In that case there are two basis functions $\phi^i_{n,0}=1,
\ \phi^i_{n,1}=2\frac{s-t_{n+1/2}^i}{k_n^i}$ on $I_n^i$, and
\begin{eqnarray*}
  \begin{array}{l}
\int_{I_n^i} \tilde{g}^k_{j,i} \phi_{n,0}^i\,ds
  =({\mathbb M}^{0,0} \tilde{\mathbf G}^{j,k}_{0}+{\mathbb M}^{1,0} \tilde{\mathbf G}^{j,k}_{1})_n,\\
\int_{I_n^i} \tilde{g}^k_{j,i} \phi_{n,1}^i\,ds
  =({\mathbb M}^{0,1} \tilde{\mathbf G}^{j,k}_{0}+{\mathbb M}^{1,1} \tilde{\mathbf G}^{j,k}_{1})_n,
  \end{array}
\end{eqnarray*}
with, for $0\le ,n\le N_i,\, 0 \le m \le N_j$,
\begin{eqnarray*}
  \begin{array}{l}
\ds({\mathbb M}^{0,0})_{n+1,m+1}=\int_I \mathbf{1}_{I_m^j}\mathbf{1}_{I_n^i}\,ds, \quad
({\mathbb M}^{1,1})_{n+1,m+1}=4\int_I \frac{s-t_{m+1/2}^j}{k_m^j}\mathbf{1}_{I_m^j}
                           \frac{s-t_{n+1/2}^i}{k_n^i}\mathbf{1}_{I_n^i}\,ds, \\
\ds({\mathbb M}^{1,0})_{n+1,m+1}=2\int_I \frac{s-t_{m+1/2}^j}{k_m^j}\mathbf{1}_{I_m^j}\mathbf{1}_{I_n^i}\,ds,\quad
\ds({\mathbb M}^{0,1})_{n+1,m+1}=2\int_I \mathbf{1}_{I_m^j}\frac{s-t_{n+1/2}^i}{k_n^i}\mathbf{1}_{I_n^i}\,ds,
  \end{array}\nonumber
\end{eqnarray*}
and  $\tilde{\mathbf G}^{j,k}_{m,0}, \ \tilde{\mathbf G}^{j,k}_{m,1}$ defined by
\begin{eqnarray*}
  \left\{\begin{array}{l}
      \tilde{\mathbf G}^{j,k}_{m,0}=\frac{1}{k_m^j}\int_{I_m^j}\tilde{g}^k_{j,i} \,ds\\\
      \tilde{\mathbf G}^{j,k}_{m,1}=\frac{6}{k_m^j}\int_{I_m^j}\frac{s-t_{m+1/2}^j}{k_m^j}\tilde{g}^k_{j,i} \,ds
    \end{array}\right.
\end{eqnarray*}
The projection matrices ${\mathbb M}^{\alpha,\beta}$ are computed by a
simple and optimal projection algorithm without any additional grid
(see \cite{gander:2003:OSWW},\cite{gander:2005:NZF}).\\

We now discuss the space dicretization using finite elements.
\subsection{Space discretization}
We suppose that each subdomain $\Omega_i$ is provided with
its own mesh ${\cal T}_h^i, \ 1 \le i \le I$, such that
\begin{equation*}
\overline \Omega_i=\cup_{T \in {\cal T}_h^i} T.
\end{equation*}
For $T \in {\cal T}_h^i$, let $h_T:=\sup_{x,y \in T} d(x,y)$ be the diameter of $T$
and $h$ the discretization parameter
\begin{equation*}
h=\max_{1 \le i\le I} h_i, \quad \mbox{with} \quad
h_i=\max_{T \in {\cal T}_h^i} h_T.
\end{equation*}
%
%
Let ${\cal P}_1(T)$ denote the space of all polynomials defined over T
of total degree less than or equal to $1$.
Then, we define over each subdomain the conforming spaces $V_h^i$  by :
\begin{equation*}
V_h^i=\{v_{i,h} \in {\cal C}^0(\overline \Omega_i),
\ \  {v_{i,h}}_{|T} \in {\cal P}_1(T), \ \forall T \in {\cal T}_h^i\}.
\end{equation*}
In what follows we assume that the mesh is designed by taking into account
the geometry of the $\Gamma_{i,j}$ in the sense that, the
space of traces over each
$\Gamma_{i,j}$ of elements of $V_h^i$ is a finite element space
denoted by ${\cal V}_h^{i,j}$. Let $n^{i,j}$ be the dimension of ${\cal V}_h^{i,j}$ and
$(\chi_{\ell,h}^{i,j})_{1\le \ell \le n^{i,j}}$ the finite element basis functions of ${\cal V}_h^{i,j}$.
%

We consider two cases : when the grids in space are conforming, and
the case of nonconforming space grids.

\subsubsection{Conforming case}
In the case of conforming grids in space, we have ${\cal V}_h^{i,j}={\cal V}_h^{j,i}$.
We can replace ${\cal B}_i$ by $V_h^i$ in the variational formulation.
We set :
\begin{equation}\label{eq:atilde}
  \tilde{a}_i(u,v)=
  \int_{\Omega_i}(\frac{1}{2}((\av_i\cdot\nabla u)v-(\av_i\cdot\nabla v)u))\, dx
  +\int_{\Omega_i} \nu_i \nabla u\cdot\nabla v \, dx + \int_{\Omega_i} (c_i+\frac{1}{2}\div \av_i) u v \, dx,
\end{equation}
and
\begin{equation*}
\begin{array}{l}
  <C_{i,j}u,v>_{\Gamma_{i,j}}
  =\int_{\Gamma_{i,j}} \bigl((p_{i,j}-\frac{\av_i\cdot \nv_i}{2})\,uv \\
    \hspace{30mm}+ q_{i,j}\, (\partial_t ({\cal I}^i u)
    + \divs{\Gamma_{i,j}}(\rv_{i,j}u))v-s_{i,j}\grads{\Gamma_{i,j}}u\grads{\Gamma_{i,j}}v\bigr)\,d\sigma,\\
  <\widetilde{C}_{i,j}u,v>_{\Gamma_{i,j}}
  =\int_{\Gamma_{i,j}} \bigl((p_{i,j}-\frac{\av_i\cdot \nv_i}{2})\,uv \\
    \hspace{30mm}+ q_{i,j}\, (\partial_t ({\cal I}^j u)
    + \divs{\Gamma_{i,j}}(\rv_{i,j}u))v-s_{i,j}\grads{\Gamma_{i,j}}u\grads{\Gamma_{i,j}}v\bigr)\,d\sigma.
\end{array}
\end{equation*}
We introduce the discret algorithm : let $(g^1_{i,j,h})$ be a given initial guess
in ${\cal P}_d({\cal V}_h^{i,j},{\cal T}_i)$, for $1 \le i \le I$, $j \in {\cal N}_i$.
 Let $U_{i,h}^k$ be the approximation of $u_i^k$ in ${\cal P}_d(V_h^{i},{\cal T}_j)$.
Then, at iteration $k \ge 1$, we solve the subdomain problem in $\Omega_i$:
\begin{multline}\label{eq:algosdorder2discreteconf}
  \int_{\Omega_i}\bigl(\partial_t ({\cal I}^iU_{i,h}^k)v_{i,h}+\tilde{a}_i(U_{i,h}^k,v_{i,h})\bigr)\,dx\,
     +<C_{i,j} U_{i,h}^k,v_{i,h}>_{\Gamma_{i,j}}\\
    =\int_{\Omega_i}P^ifv_{i,h}\,dx+\int_{\Gamma_{i,j}}g_{i,j,h}^kv_{i,h}\,d\sigma, \mbox{ in } (0,T), \
    \forall v_{i,h} \in V_h^i,
\end{multline}
For $k \ge 2$, $v_h \in {\cal V}_h^{i,j}$, we define
\begin{equation}\label{eq:gijkh}
  \int_{\Gamma_{i,j}}g_{i,j,h}^kv_h\,d\sigma:=P^i\int_{\Gamma_{i,j}}\tilde{g}_{j,i,h}^kv_h\,d\sigma,
\end{equation}
with
\begin{equation*}
\int_{\Gamma_{i,j}}\tilde{g}_{j,i,h}^kv_h\,d\sigma:=-\int_{\Gamma_{i,j}}g^{k-1}_{j,i,h}v_h\,d\sigma
  +<C_{j,i}U_{j,h}^{k-1}+\widetilde{C}_{i,j}U_{j,h}^{k-1},v_h>_{\Gamma_{i,j}}.
\end{equation*}
In equation \eqref{eq:gijkh}  we used the fact that the space of traces over each
$\Gamma_{i,j}$ of elements of $V_h^i$ is the same as the space of traces over each
$\Gamma_{i,j}$ of elements of $V_h^j$. For the computation of the right-hand side in
\eqref{eq:algosdorder2discreteconf}, we follow the same steps as in section \ref{sec:boundaryterms},
where we replace $\tilde{g}_{j,i}^k$ with $\tilde{\gv}_{j,i,h}^k \in {\cal V}_h^{i,j}$ defined by
\begin{equation*}
\tilde{\gv}_{j,i,h}^k=\bigl(\int_{\Gamma_{i,j}}\tilde{g}_{j,i,h}\chi^{i,j}_{1,h}\,d\sigma,...,
       \int_{\Gamma_{i,j}}\tilde{g}_{j,i,h}\chi^{i,j}_{n^{i,j},h}\,d\sigma\bigr)^t,
\end{equation*}
and we replace $\tilde{G}^{i,k}_{m,\alpha}$ with $\tilde{G}^{i,k}_{m,\alpha,h} \in {\cal V}_h^{i,j}$ solution of
\begin{equation}\label{eq:systGlocald}
\sum_{\alpha=0}^d \tilde{G}^{j,k}_{m,\alpha,h}\int_{I_m^j}\phi_{m,\alpha}^j\phi_{m,\beta}^j \,ds
=\int_{I_m^j} \tilde{\gv}^k_{j,i,h} \phi_{m,\beta}^j, \quad \beta \in \{0,...,d\}.
\end{equation}
The discrete formulation in the cases $d=0$ and $d=1$ are obtained from \eqref{eq:semidiscreteq0} and
\eqref{eq:semidiscreteq1}, by replacing ${\cal B}_i$ by $V_h^i$.\\

When the space grids are nonconforming, following
\cite{gander:2005:NZF}, we cannot replace directly ${\cal B}_i$ by the
finite element space $V_h^i$ in the variational formulation.  We have
to consider equation \eqref{eq:dGstronginterpd}
(i.e. \eqref{eq:dGstronginterpd0} for $d=0$, and
\eqref{eq:dGstronginterpd1} for $d=1$).
\subsubsection{Nonconforming case}
In this section we extend the nonconforming approach in \cite{gander:2005:NZF}.
We consider the mortar spaces $\tilde{W}_h^{i,j}$ as in \cite{gander:2005:NZF}.
Let $m^{i,j}$ be the dimension of $\tilde{W}_h^{i,j}$ and $(\psi^{i,j}_{k,h})_{1\le k\le m^{i,j}}$ the finite element
basis functions of $\tilde{W}_h^{i,j}$.
We introduce the discrete algorithm : let $(U_{i,h}^{k-1}, Q_{i,h}^{k-1}) \in
{\cal P}_d(V_h^i,{\cal T}_i) \times {\cal P}_d(\tilde{W}_h^{i,j},{\cal T}_i)$
be a discrete approximation of $(U_{i}^{k-1},\nu_i\partial_{\nv_i}U^{k-1}_i)$
in $\Omega_i$ at step $k-1$.
Then  $(U_{i,h}^{k}, Q_{i,h}^{k})$ is the solution in
${\cal P}_d(V_h^i,{\cal T}_i) \times {\cal P}_d(\tilde{W}_h^{i,j},{\cal T}_i)$ of
\begin{eqnarray}\label{eq:algosdorder2discrete}
  \begin{array}{l}
 \ds \frac{d\,}{dt}\,({\cal I}^iU_{i,h}^k, v_{i,h})_i+\tilde{a}_i(U_{i,h}^k,v_{i,h})_i\\
 \hspace{2cm}+\ds\int_{\Gamma_{i,j}} (Q_{i,h}^k-\frac{\av_i\cdot\nv_i}{2} U_{i,h}^k)v_{i,h} d\sigma
   = (P^if,v_{i,h})_i, \mbox{ in } (0,T), \ \forall v_{i,h} \in V^i_h,\\[5mm]
\ds\int_{\Gamma_{i,j}}\bigl(Q_{i,h}^k- \av_i\cdot\nv_i U_{i,h}^k
     + p_{i,j}\,U_{i,h}^k\bigr)\psi_h^{i,j}\,d\sigma\\
    \ds+ \int_{\Gamma_{i,j}}\bigl(q_{i,j}(\partial_t ({\cal I}^i U_{i,h}^k)
    + \divs{\Gamma_{i,j}}(\rv_{i,j} U_{i,h}^k))\psi_h^{i,j}
    + q_{i,j}s_{i,j}\grads{\Gamma_{i,j}}U_{i,h}^k\grads{\Gamma_{i,j}}\psi_h^{i,j}\bigr)\,d\sigma \\
    \hspace{2cm}\ds=\int_{\Gamma_{i,j}}P^i\bigl(-Q_{j,h}^{k-1}
    -\av_{j}\cdot\nv_{i} U_{j,h}^{k-1} + p_{i,j}\,U_{j,h}^{k-1}\bigr)\psi_h^{i,j}\,d\sigma\\
   \ds + \int_{\Gamma_{i,j}}P^i\bigl(q_{i,j}(\partial_t ({\cal I}^j U_{j,h}^{k-1})
    + \divs{\Gamma_{i,j}}(\rv_{i,j} U_{j,h}^{k-1}))\psi_h^{i,j}
    + q_{i,j}s_{i,j}\grads{\Gamma_{i,j}}U_{j,h}^{k-1}\grads{\Gamma_{i,j}}\psi_h^{i,j}\bigr)\,d\sigma\\
    \mbox{ on } (0,T), \quad \forall \psi_h^{i,j} \in \tilde{W}_h^{i,j},j\in {\cal N}_i.
  \end{array}
\end{eqnarray}
We give first the interior scheme for $d=0$ and $d=1$ and then the computation of
the right-hand side in the transmission condition of \eqref{eq:algosdorder2discrete}.\\

{\bf Interior scheme.}
The discrete problem in subdomain $\co=\Omega_i$ in \eqref{eq:algosdorder2discrete} is
defined as follows : find
$(U_h,Q_h):=(U_{i,h}^{k}, Q_{i,h}^{k})$ in
${\cal P}_d(V_h^i,{\cal T}_i) \times {\cal P}_d(\tilde{W}_h^{i,j},{\cal T}_i)$ solution of
\begin{eqnarray}\label{eq:formvariatdiscrete}
  \begin{array}{l}
 \ds \frac{d\,}{dt}\,({\cal I} U_h, v_h)+\tilde{a}(U_h,v_h)
 +\int_\Gamma (Q_h- \frac{\av\cdot\nv}{2} U_h)v_h d\sigma
   = (Pf,v_h), \ \mbox{ in } (0,T), \ \forall v_h \in V^i_h, \\
\ds\int_\Gamma\bigl((Q- \av\cdot\nv U_h + p\,U_h
    + q(\partial_t ({\cal I} U_h)   + \divs{\Gamma}(\rv U_h)))\psi_h
    +qs\grads{\Gamma}U_h\grads{\Gamma}\psi_h\bigr)\,d\sigma\\
    =\int_\Gamma (Pg)\psi_h\,d\sigma, \  \mbox{ on } (0,T), \ \forall \psi_h \in \tilde{W}_h^{i,j}.
  \end{array}\nonumber
\end{eqnarray}
In the cases $d=0$ and $d=1$ we obtain
\vspace{3mm}

\textbf{Case $d=0$}
\vspace{2mm}

In that case, the approximating functions are piecewise constant
in time: $U_h(t)=U_h^{n+1}=U_{h,+}^{n}$ and $Q_h(t)=Q_h^{n+1}=Q_{h,+}^{n}$ on $I_n^i$, and
the discrete problem reduces to find $(U_h^{n+1},Q_h^{n+1}) \in V^i_h \times \tilde{W}_h^{i,j}$
solution of
\begin{eqnarray}\label{eq:dGstronginterpd0-2}
  \begin{array}{c}
     \ds(\frac{U_h^{n+1}}{k_n},v_h)+\tilde{a}(U_h^{n+1},v_h)
 +\int_\Gamma (Q_h^{n+1}- \frac{\av\cdot\nv}{2} U_h^{n+1})v_h d\sigma\\ \ds= (\frac{U_h^{n}}{k_n},v_h)
 +\frac{1}{k_n}\int_{I_n}(f(\cdot,s),v_h)\,ds, \forall v_h \in V^i_h, \\
    \ds\int_\Gamma\bigl( Q_h^{n+1}- \av\cdot\nv U_h^{n+1} +
    p\,U_h^{n+1} + q(\frac{U_h^{n+1}}{k_n}  + \divs{\Gamma}(\rv U_h^{n+1}))\psi_h
    +qs\grads{\Gamma}U_h^{n+1}\grads{\Gamma}\psi_h\bigr) \,d\sigma\\
    \ds= \int_\Gamma q\frac{U_h^{n}}{k_n}\psi_h \,d\sigma
    +\frac{1}{k_n}\int_{I_n}\int_\Gamma g(s)\psi_h \, d\sigma\,ds,
     \forall \psi_h \in \tilde{W}_h^{i,j}.
  \end{array}\nonumber
\end{eqnarray}
\textbf{Case $d=1$}
\vspace{2mm}

In that case, we write $U_h(t)=U^{n+1}_{0,h}+2\frac{t-t_{n+1/2}}{k_n}U_{1,h}^{n+1}$ and
$Q_h(t)=Q^{n+1}_{0,h}+2\frac{t-t_{n+1/2}}{k_n}Q_{1,h}^{n+1}$ on $I_n^i$,
$U_h^n=U_h(t_n)$, and the discrete problem reduces to find
$(U^{n+1}_{0,h},Q^{n+1}_{0,h})$ and $(U^{n+1}_{1,h},Q^{n+1}_{1,h})$ in $V^i_h \times \tilde{W}_h^{i,j}$
solution of the system
\begin{eqnarray}\label{eq:dGstronginterpd1-2}
  \begin{array}{l}
     \ds\frac{1}{k_n}(U_{0,h}^{n+1}+U_{1,h}^{n+1},v_h)+\tilde{a}(U_{0,h}^{n+1},v_h)
 +\int_\Gamma (Q_{0,h}^{n+1}- \frac{\av\cdot\nv}{2} U_{0,h}^{n+1})v_h \,d\sigma\\\ds \hspace{3cm}= (\frac{U^{n}}{k_n},v_h)
 +\frac{1}{k_n}\int_{I_n}(f,v_h)\,ds, \quad \forall v_h \in V^i_h, \\
         \ds\frac{3}{k_n}(-U_{0,h}^{n+1}+U_{1,h}^{n+1},w_h)+\tilde{a}(U_{1,h}^{n+1},w_h)
 +\int_\Gamma (Q_{0,h}^{n+1}- \frac{\av\cdot\nv}{2} U_{0,h}^{n+1}) w_h \,d\sigma\\\ds \hspace{3cm}= -(\frac{3 U^{n}}{k_n},w_h)
 +\frac{6}{k_n}\int_{I_n}\frac{s-t_{n+1/2}}{k_n}(f,w_h)\,ds, \quad \forall w_h \in V^i_h, \\[4mm]
   \ds\int_\Gamma\bigl(Q_{0,h}^{n+1}- \av\cdot\nv U_{0,h}^{n+1} +
    p\,U_{0,h}^{n+1} + \frac{q}{k_n}(U_{0,h}^{n+1}+U_{1,h}^{n+1})
    +q\divs{\Gamma}(\rv U_{0,h}^{n+1})\bigr)\psi_h\,d\sigma\\[3mm] \hspace{0.5cm}
    \ds+\int_\Gamma qs\grads{\Gamma}U_{0,h}^{n+1}\grads{\Gamma}\psi_h\,d\sigma
    = \int_\Gamma q\frac{U^{n}}{k_n}\psi_h \,d\sigma
     +\frac{1}{k_n}\int_{I_n}\int_\Gamma g \psi_h \,d\sigma\,ds,
    \quad \forall \psi_h \in \tilde{W}_h^{i,j},\\[3mm]
     \ds\int_\Gamma\bigl(Q_{1,h}^{n+1}- \av\cdot\nv U_{1,h}^{n+1} +p\,U_{1,h}^{n+1}
     + \frac{3q}{k_n}(-U_{0,h}^{n+1}+U_{1,h}^{n+1})
     + q\divs{\Gamma}(\rv U_{1,h}^{n+1})\bigr)\zeta_h\,d\sigma\\[3mm]
     \ds+\int_\Gamma qs\grads{\Gamma}U_{1,h}^{n+1}\grads{\Gamma}\zeta_h\,d\sigma
    = -\int_\Gamma q\frac{3U^{n}}{k_n} \zeta_h \,d\sigma
    +\frac{6}{k_n}\int_{I_n}\frac{s-t_{n+1/2}}{k_n}\int_\Gamma g\zeta_h \,d\sigma \,ds,
      \quad \forall \zeta_h \in \tilde{W}_h^{i,j}.
  \end{array}\nonumber
\end{eqnarray}

{\bf Transmission terms.}
Let $(U_{i,h}^{0}, Q_{i,h}^{0}) \in
{\cal P}_d(V_h^i,{\cal T}_i) \times {\cal P}_d(\tilde{W}_h^{i,j},{\cal T}_i)$
be a given initial guess, for $1 \le i \le I$.
Then, at iteration $k \ge 1$, we solve the subdomain problem in $\Omega_i$ :
\begin{eqnarray}\label{eq:algosdorder2discreteb}
  \begin{array}{l}
 \ds \frac{d\,}{dt}\,({\cal I}^iU_{i,h}^k, v_{i,h})_i+\tilde{a}_i(U_{i,h}^k,v_{i,h})_i\\
 \hspace{2cm}+\ds\int_{\Gamma_{i,j}} (Q_{i,h}^k-\frac{\av_i\cdot\nv_i}{2} U_{i,h}^k)v_{i,h} d\sigma
   = (P^if,v_{i,h})_i, \mbox{ in } (0,T), \ \forall v_{i,h} \in V^i_h,\\[5mm]
\ds\int_{\Gamma_{i,j}}\bigl(Q_{i,h}^k- \av_i\cdot\nv_i U_{i,h}^k
     + p_{i,j}\,U_{i,h}^k\bigr)\psi_h^{i,j}\,d\sigma\\
    \ds+ \int_{\Gamma_{i,j}}\bigl(q_{i,j}(\partial_t ({\cal I}^i U_{i,h}^k)
    + \divs{\Gamma_{i,j}}(\rv_{i,j} U_{i,h}^k))\psi_h^{i,j}
    + q_{i,j}s_{i,j}\grads{\Gamma_{i,j}}U_{i,h}^k\grads{\Gamma_{i,j}}\psi_h^{i,j}\bigr)\,d\sigma \\
    \hspace{2cm}\ds=
    P^i\tilde{g}_{h}((U_{j,h}^{k-1},Q_{j,h}^{k-1}),\psi_h^{i,j}),
    \mbox{ on } (0,T), \quad \forall \psi_h^{i,j} \in \tilde{W}_h^{i,j},j\in {\cal N}_i,
  \end{array}
\end{eqnarray}
with, for $k \ge 1$,
\begin{eqnarray}\label{eq:gkpsi}
  \begin{array}{l}
    \tilde{g}_{h}((U,Q),\psi):=\int_{\Gamma_{i,j}}\bigl(-Q
    +\av_{j}\cdot\nv_{j} U + p_{i,j}\,U\bigr)\psi\,d\sigma\\
   \ds + \int_{\Gamma_{i,j}}q_{i,j}(\partial_t ({\cal I}^j U)
    + \divs{\Gamma_{i,j}}(\rv_{i,j} U))\psi
    + q_{i,j}s_{i,j}\grads{\Gamma_{i,j}}U\grads{\Gamma_{i,j}}\psi\bigr)\,d\sigma.
  \end{array}
\end{eqnarray}
For the computation of the right-hand side in
\eqref{eq:algosdorder2discreteb}, we follow the same steps as in section \ref{sec:boundaryterms},
where we replace $\tilde{g}_{j,i}^k$ with
\begin{equation*}
\tilde{\gv}_{j,i,h}^k=\bigl(\tilde{g}_{h}((U_{j,h}^{k-1},Q_{j,h}^{k-1}),\psi_{1,h}^{i,j}),...,
      \tilde{g}_{h}((U_{j,h}^{k-1},Q_{j,h}^{k-1}),\psi_{m^{i,j},h}^{i,j})\bigr)^t,
\end{equation*}
and we replace $\tilde{G}^{i,k}_{m,\alpha}$ with $\tilde{G}^{i,k}_{m,\alpha,h} \in \tilde{W}_h^{i,j}$ solution of
\begin{equation}
\sum_{\alpha=0}^d \tilde{G}^{j,k}_{m,\alpha,h}\int_{I_m^j}\phi_{m,\alpha}^j\phi_{m,\beta}^j \,ds
=\int_{I_m^j} \tilde{\gv}^k_{j,i,h} \phi_{m,\beta}^j, \quad \beta \in \{0,...,d\}.
\end{equation}
For the computation of $\tilde{\gv}_{j,i,h}^k$, we write
$(U_{j,h}^{k-1})_{|_{\Gamma_{i,j}}}=\sum_{l=1}^{n^{j,i}}u_{j,l}^h\chi^{j,i}_{l,h}$, and
$Q_{j,h}^{k-1}=\sum_{\ell=1}^{m^{j,i}}z_{j,\ell}^h\psi^{j,i}_{\ell,h}$, and introduce
$\vecb{Q}_{j,h}^{k-1}=(z_{j,1}^h,...,z_{j,m^{j,i}}^h)^t$,
$\vecb{U}_{j,h}^{k-1}=(u_{j,1}^h,...,u_{j,n^{j,i}}^h)^t$,
and the projection
matrices, for $1\le k \le m^{i,j},\, 1\le l \le m^{j,i}$, and $1\le \ell \le n^{j,i}$,
\begin{multline*}
(\widetilde{\mathbb{M}}_h^{i,j})_{k,l}=\int_{\Gamma_{i,j}}\psi^{i,j}_{k,h}\psi^{j,i}_{l,h}\,d\sigma,
\ (\mathbb{M}_{h}^{i,j})_{k,\ell}=\int_{\Gamma_{i,j}}\psi^{i,j}_{k,h}\chi^{j,i}_{\ell,h}\,d\sigma,\\
(\mathbb{M}_{b,h}^{i,j})_{k,\ell}=\int_{\Gamma_{i,j}}(-\av_j\cdot\nv_i+p_{i,j})
\psi^{i,j}_{k,h}\chi^{j,i}_{\ell,h}\,d\sigma,\\
(\mathbb{B}_{r,h}^{i,j})_{k,\ell}=\int_{\Gamma_{i,j}}\divs{\Gamma_{i,j}}(\rv_{i,j} \chi^{j,i}_{\ell,h})\psi^{i,j}_{k,h}
\,d\sigma,
\ (\mathbb{K}_{s,h}^{i,j})_{k,\ell}=\int_{\Gamma_{i,j}}q_{i,j}s_{i,j}
   \grads{\Gamma_{i,j}}\chi^{j,i}_{\ell,h}\grads{\Gamma_{i,j}} \psi^{i,j}_{k,h}
\,d\sigma.
\end{multline*}
Then
\begin{equation}
\tilde{\gv}_{j,i,h}^k=-\widetilde{\mathbb{M}}_h^{i,j}\vecb{Q}_{j,h}^{k-1}
+\partial_t({\cal I}^j \mathbb{M}_{h}^{i,j} \vecb{U}_{j,h}^{k-1})
+(\mathbb{M}_{b,h}^{i,j}+ \mathbb{B}_{r,h}^{i,j}+\mathbb{K}_{s,h}^{i,j})\vecb{U}_{j,h}^{k-1}.
\end{equation}
The projection matrices are computed using the projection algorithm in \cite{gander:2005:NZF}.
%
\section{Numerical Results}
\label{sec:NumRes}
We have implemented the algorithm with $d=1$ and $\PP_1$
finite elements in space in each subdomain. Time windows are used in
order to reduce the number of iterations of the algorithm.  For the free parameters defining $S_{i,j}$ and $\tilde{S}_{i,j}$, we chose $r_{i,j}$ to be the tangential component of the advection, $s_{i,j}$ the value of the diffusion in the domain $\Omega_j$. The
optimized parameters $p_{i,j}$ and $q_{i,j}$ are constant along the interface.  They correspond to a mean value of the parameters obtained by a numerical
optimization of the convergence factor \cite{gander:2007:SWR}.
%

We first give an example of a multidomain solution with discontinuous
variable diffusion, for two subdomains and one time window. The
advection velocity is also discontinuous, taken normal to the
interface in one subdomain, and tangential to the interface in the
other subdomain. The latter case of a flow tangential to the interface
is difficult when the interface conditions are not related to the
convergence factor of the domain decomposition method (see for example
\cite{japhet:1998:MDD}).

The physical domain is
$\Omega=(0,1) \times (0,2)$, the final time is $T=1$.  The initial
value is $u_0=0.25e^{-100((x-0.55)^2+(y-1.7)^2)}$ and the right-hand
side is $f=0$.  The domain $\Omega \times (0,2)$ is split into two subdomains
$\Omega_1=(0,0.5) \times (0,2)$ and $\Omega_2=(0.5,1) \times (0,2)$.
The reaction $c$ is zero, the advection and diffusion coefficients are
$\av_1=(0,-1)$, $\nu_1=0.001\sqrt{y}$, and $\av_2=(-0.1,0)$,
$\nu_2=0.1\sin(xy)$.  The mesh size over the interface and time step
in $\Omega_1$ are $h_1=1/32$ and $k_1=1/128$, while in $\Omega_2$,
$h_2=1/24$ and $k_2=1/94$.  In Figure \ref{fig:1}, we observe, at
final time $T=1$, that the approximate solution computed using 3
iterations (right figure) is close to the variational solution
computed in one time window on a time conforming finergrid (left
figure).
\begin{figure}[H]
\centering
\includegraphics[width=0.49\textwidth]{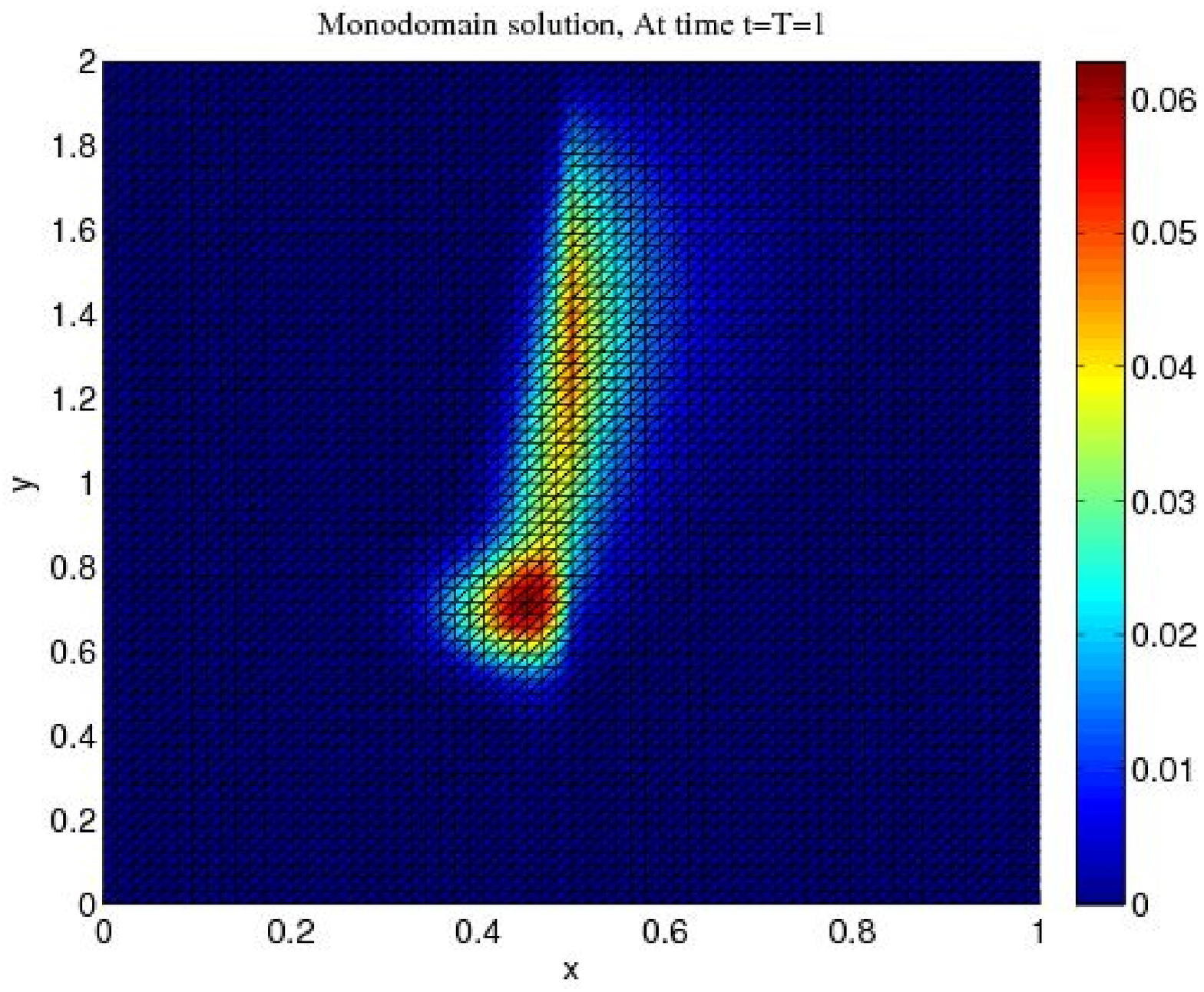}
\hfill
\includegraphics[width=0.49\textwidth]{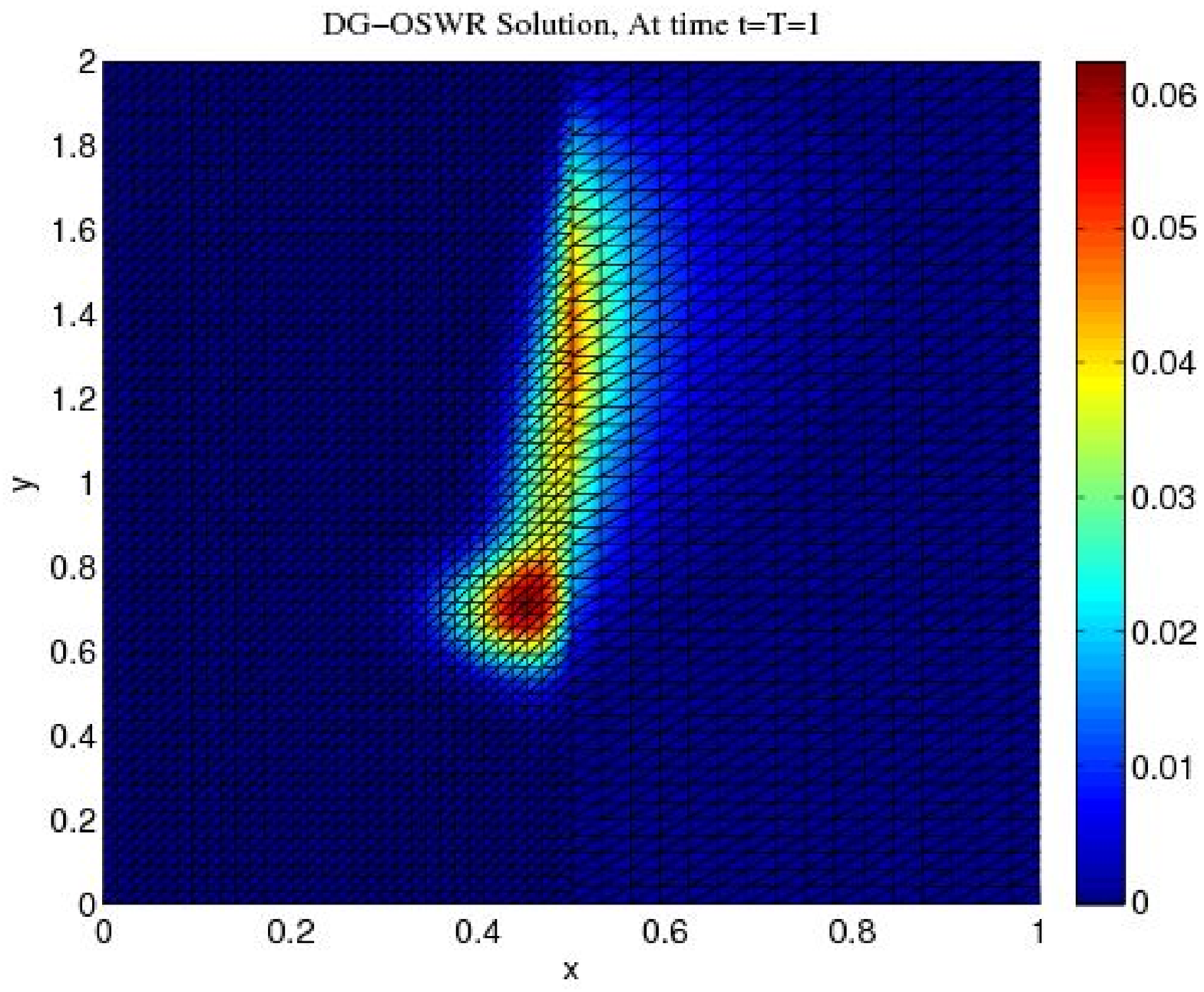}
\caption{Variational  (left) and nonconforming (right) DG-OSWR solutions}
\label{fig:1}       
\end{figure}

We analyze now the precision in time. The space mesh
is conforming and the converged solution is such that the residual is
smaller than $10^{-8}$. We compute a variational reference solution
on a time grid with 4096 time steps. The nonconforming solutions are
interpolated on the previous grid to compute the error.
We start with a time grid with 128 time steps for the left domain and
94 time steps for the right domain.
Thereafter the time step is divided by 2 several times. Figure
\ref{fig:2} shows the norms of the error in $L^{\infty}(I;L^2(\Omega_i))$
 versus the number of refinements, for both subdomains.
First we observe the order $2$ in time for the nonconforming
case. This fits the theoretical estimates, even though we have theoretical
results only for Robin transmission conditions.
Moreover, the error obtained in the nonconforming case, in the subdomain
where the grid is finer, is nearly the same as the error obtained
in the conforming finer case.
\begin{figure}[H]
\centering
\includegraphics[width=8cm]{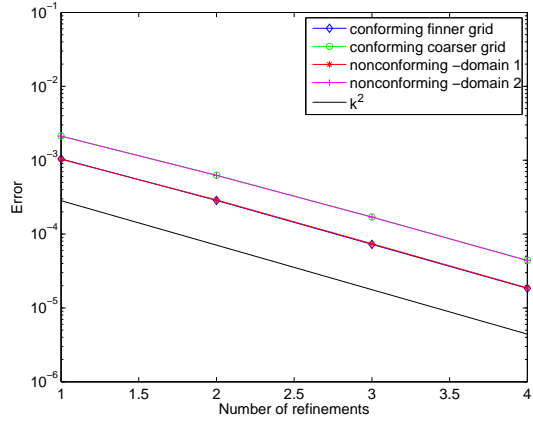}
\caption{Error between variational and DG-OSWR solutions versus the refinement in time}
\label{fig:2}       
\end{figure}
The computations are done using Order 2 transmission conditions. Indeed, the error between the multidomain and the variational solutions decrease much faster with the Order 2 transmissions conditions than with the Robin transmissions conditions as we can see in Figure \ref{fig:3}, in the conforming case.
\begin{figure}[H]
\centering
\includegraphics[width=9cm]{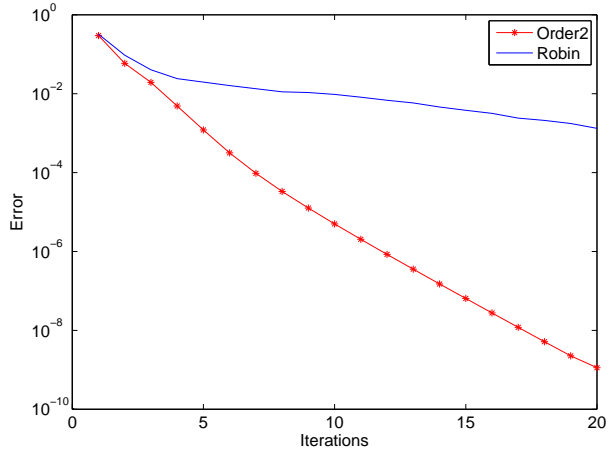}
\caption{Convergence history for different transmission conditions}
\label{fig:3}       
\end{figure}

We now consider the advection-diffusion equation with discontinuous porosity $\omega$:
$$\displaystyle \omega \partial_t u +\nabla
\cdot({\av}u-\nu \nabla u)=0. $$
The physical domain is $\Omega=(0,1) \times (0,2)$, the final time is $T=1.5$. $\Omega$ is split into two subdomains. The interface  $\Gamma$ is parametrized  with a Hermite polynomial $({1 \over 2}+((2s-1)^3+2(2s-1)^2+(2s-1))\displaystyle{1\!\!1_{s \le {1 \over 2}}}+((2s-1)^3-2(2s-1)^2+(2s-1))
\displaystyle{1\!\!1_{s \ge {1 \over 2}}}), \, 0<s<1$, see Figure \ref{fig:4}.
The advection, diffusion and porosity
coefficients are \\\
$\av_1=(-sin({\pi \over 2}(y-1))cos(\pi(x-{1 \over
2})),3cos({\pi \over 2}(y-1))sin(\pi(x-{1 \over 2})))$, $\nu_1=0.003$,
$\omega_1=0.1$,\\
$\av_2=\av_1$, $\nu_2=0.01$, $\omega_2=1$.

\begin{figure}[H]
\centering
\includegraphics[width=0.8\textwidth]{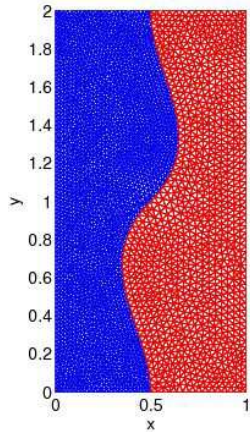}
\caption{Domains $\Omega_1$ (left) and $\Omega_2$ (right)}
\label{fig:4}       
\end{figure}

\noindent
We first consider a conforming grid in space. The time step in $\Omega_1$ is $k_1=1/180$, while in $\Omega_2$, $k_2=1/100$. In Figure \ref{fig:5}, we observe, at final time $T=1.5$, the approximate solution computed using ten time windows and 5 iterations in each time window. It is close to the variational solution computed in one time window on the conforming finer space-time grid as shown on the error, in Figure \ref{fig:5b}.
\begin{tabular}{cc}
\begin{minipage}{0.45\textwidth}
\centering
\begin{figure}[H]
\hspace{-2cm}
\includegraphics[scale=0.45]{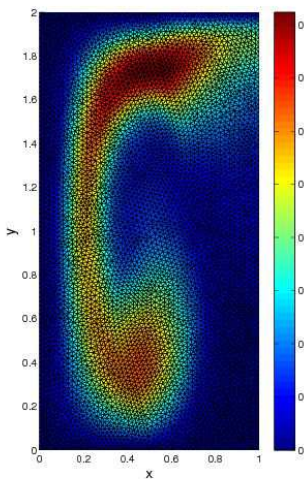}
\caption{DG-OSWR solution after 10 time
windows and 5 iterations per window}
\label{fig:5}       
\end{figure}
\end{minipage}
&
\begin{minipage}{0.45\textwidth}
\centering
\begin{figure}[H]
\hspace{-2cm}
\includegraphics[scale=0.45]{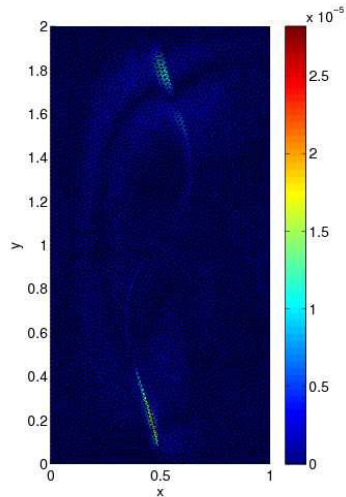}
\caption{Error with variational solution after 10 time
windows and 5 iterations per window}
\label{fig:5b}      
\end{figure}
\end{minipage}
\end{tabular}

\leavevmode\par
We analyze in Figure \ref{fig:6} the precision versus the time step. The converged solution is such that the residual is smaller than $10^{-12}$. A variational reference solution is computed on a time grid with 7680 time steps. The time nonconforming solutions are interpolated on the previous grid to compute the error.  We start with a time grid with 120 time steps for the left domain and 26 time steps for the right domain and divide by 2 the time steps several times.  Figure \ref{fig:6} shows the norms of the error in $L^{\infty}(I;L^2(\Omega_i))$ versus the time steps, for both subdomains.
\begin{figure}[H]
\centering
\includegraphics[scale=0.35]{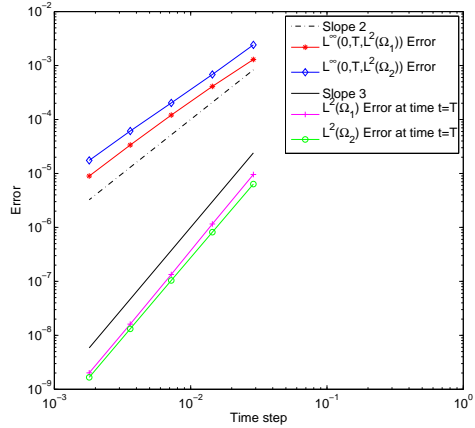}
\caption{Error curves versus the refinement in time}
\label{fig:6}       
\end{figure}

We observe the order $2$ in time for the nonconforming case that fits the theoretical estimates. In Figure \ref{fig:6} we show also the norms of the error in $L^2(\Omega_i)$ at final time $t=T$ versus the time steps, for both subdomains. We observe the order $3$ for the time nonconforming  case. This corresponds to the superconvergence behavior described in \cite{johnson:1995:dgparab2}.

We now consider nonconforming grids in space as well. The mesh size and time step in $\Omega_1$ are $h_1=0.032$ and $k_1=1/120$, while in $\Omega_2$, $h_2=0.048$ and $k_2=1/26$.  In Figure \ref{fig:7} we observe, at final time $T=1.5$, that the approximate solution computed using 5 iterations in one time window is close to the solution computed with the conformal in space grid in Figure \ref{fig:5}, left. In Figure \ref{fig:8} and \ref{fig:8b} we observe the precision versus the mesh size. The converged solution is such that the residual is smaller than $10^{-12}$. A variational reference solution is computed on a time grid with 960 time steps and a space grid with mesh size $h=3.5 \,10^{-3}$.  The space-time nonconforming solutions are interpolated on the previous grid to compute the error.  We start with a time grid with 60 time steps and a mesh size $h_1=0.056$ for the left domain and 20 time steps a mesh size $h_2=0.11$ for the right domain and divide by 2 the time step and mesh size several times. Figure \ref{fig:8} shows the norms of the error in $L^2(I;L^2(\Omega_i))$ versus the time steps, for both subdomains. We observe the order $2$ for the nonconforming space-time case, even though we have theoretical results only for the time semi-discrete case. Figure \ref{fig:8b} displays the norms of the error in $L^2(\Omega_i)$ and in $H^1(\Omega_i)$ at final time $t=T$ versus the mesh size, for both subdomains. We observe the order $2$ for the $L^2$ error, and the order $1$ for the $H^1$ error for the nonconforming space-time case.
%
%
\begin{figure}[H]
\centering
\includegraphics[scale=0.6]{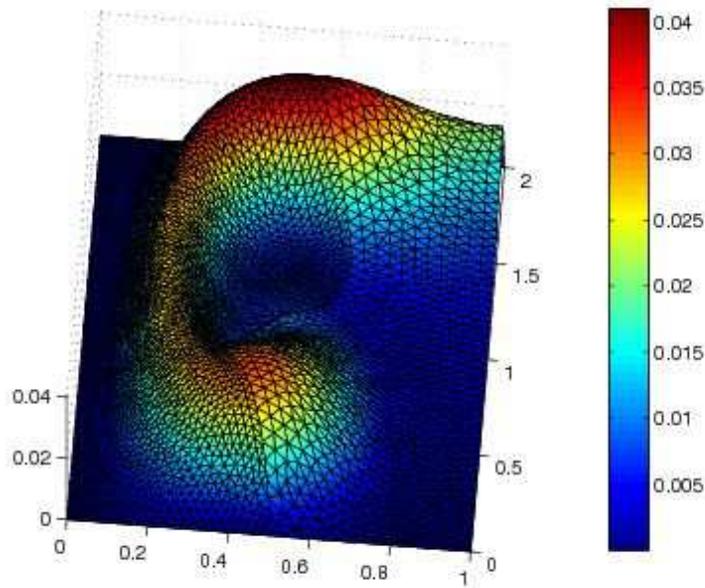}
\caption{DG-OSWR solution after 5
  iterations, in one time window}
\label{fig:7}       
\end{figure}

\begin{tabular}{cc}
\begin{minipage}{0.45\textwidth}
\begin{figure}[H]
\hspace{-20mm}
\includegraphics[width=7.49cm]{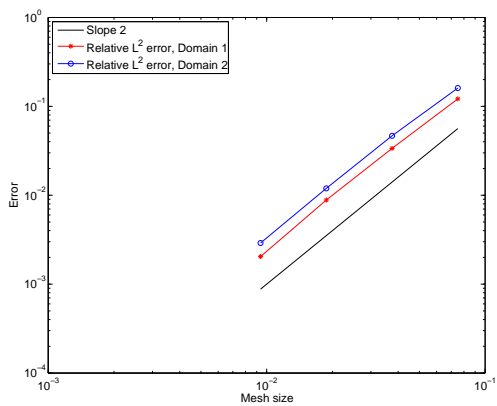}
\caption{Relative $L^{2}$ error in time and space}
\label{fig:8}       
\end{figure}
\end{minipage}
&
\begin{minipage}{0.45\textwidth}
\begin{figure}[H]
\hspace{-10mm}
\includegraphics[width=7.49cm]{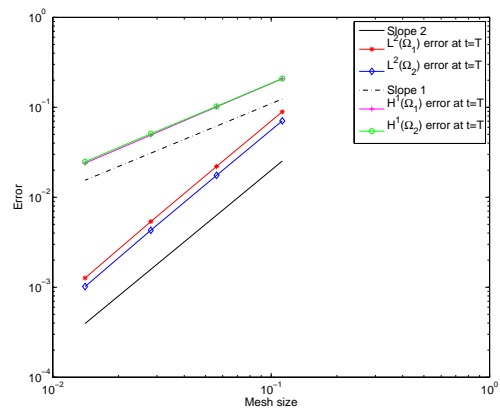}
\caption{$L^{2}$ and $H^{1}$ errors at the final time}
\label{fig:8b}       
\end{figure}
\end{minipage}
\end{tabular}
\section{Conclusion}
We have proposed a new numerical method to solve parabolic equations with discontinuous coefficients. It relies on the
splitting of the time interval into time windows, in which a few
iterations of an optimized Schwarz waveform relaxation algorithm are performed by a discontinuous
Galerkin method in time, with non conforming projection between space-time grids on the interfaces. We have shown theoretically in the Robin case that the method preserves the order of the discontinuous Galerkin method. Numerical estimates of the $L^2(I;L^2(\Omega_i))$ error and the $H^1$ error at final time have shown that the method preserves the order of the space nonconforming scheme as well. The analysis of the
fully discrete scheme will be done in a further work.

\vspace{-0.5cm}
%

 \bibliographystyle{plain}
\bibliography{paper} %



\end{document}